\theoremstyle{plain}
\theoremstyle{definition}\newtheorem{theorem}{Theorem}[section]
\theoremstyle{plain}\newtheorem{lemma}[theorem]{Lemma}
\theoremstyle{plain}\newtheorem{coro}[theorem]{Corollary}
\theoremstyle{plain}\newtheorem{prop}[theorem]{Proposition}
\theoremstyle{remark}\newtheorem{remark}{Remark}[section]
\theoremstyle{definition}
\theoremstyle{plain}
\newcommand{\norm}[1]{\left\|#1\right\|}
\newcommand{\Div}{\mathrm{div}\,}
\newcommand{\B}{\Big}
\newcommand{\be}{\begin{equation}}
\newcommand{\ee}{\end{equation}}
 \newcommand{\ba}{\begin{aligned}}
 \newcommand{\ea}{\end{aligned}}
  \newcommand{\f}{\frac}
  \newcommand{\ben}{\begin{enumerate}}
   \newcommand{\een}{\end{enumerate}}
\newcommand{\ti}{\nabla}
\newcommand{\Rmnum}[1]{\expandafter\@slowromancap\romannumeral #1@}
\numberwithin{equation}{section}
\begin{document}
\title{ Gagliardo-Nirenberg inequalities in Lorentz type spaces and energy equality for the Navier-Stokes system}
 \author{ Yanqing Wang\footnote{ Department of Mathematics and Information Science, Zhengzhou University of Light Industry, Zhengzhou, Henan  450002,  P. R. China Email: wangyanqing20056@gmail.com},\;~ Wei Wei\footnote{Corresponding author. Center for Nonlinear Studies, School of Mathematics, Northwest University, Xi'an, Shaanxi 710127,  P. R. China  Email: ww5998198@126.com }
  \;~~and ~~Yulin Ye\footnote{School of Mathematics and Statistics, Henan University, Kaifeng, Henan  475004, P. R. China. Email: ylye@vip.henu.edu.cn }
   }
\date{}
\maketitle
\begin{abstract}
 In this paper, we derive some new Gagliardo-Nirenberg type inequalities in Lorentz type spaces without restrictions on the second index of Lorentz norms, which generalize almost all known corresponding results. Our proof mainly relies on the Bernstein inequalities in Lorentz spaces, the embedding relation among various Lorentz type spaces, and Littlewood-Paley decomposition techniques. In addition, we establish several novel criteria in terms of the velocity or the gradient of the velocity in Lorentz spaces for energy conservation of the 3D Navier-Stokes equations. Particularly, we improve the classical Shinbrot's condition for energy balance to allow both the space-time directions of the velocity to be in Lorentz spaces.

 \end{abstract}
\noindent {\bf MSC(2020):}\quad 35A23, 42B35, 35L65, 76D03 \\\noindent
{\bf Keywords:} Gagliardo-Nirenberg inequality; Lorentz spaces; energy equality; Navier-Stokes equations; Littlewood-Paley decomposition \\
\section{Introduction}
\label{intro}
\setcounter{section}{1}\setcounter{equation}{0}
\subsection{Gagliardo-Nirenberg inequality}
An important way to study well-posedness of partial differential equations is
via the research of their weak solutions. It is well known that Gagliardo-Nirenberg inequality is a fundamental tool to improve the regularity of weak solutions, which has gained widespread applications such as in  Hilbert's 19th problem
\cite{[De Giorgi]},   Caffarelli-Kohn-Nirenberg theorem \cite{[CKN]} for the 3D Navier-Stokes equations and the critical quasi-geostrophic equations \cite{[CV]}.
The classical integer version of Gagliardo-Nirenberg inequality is the generalization of Sobolev embedding theorem and was discovered independently by Gagliardo \cite{[Gagliardo]} and Nirenberg \cite{[Nirenberg]} as follows: for all smooth functions $u$ in $\mathbb{R}^{n}$ with compact support, there holds
\be\label{GNI}
\|D^{j}u\|_{L^{p}(\mathbb{R}^{n})}\leq C\|D^{m}u\|_{L^{r}(\mathbb{R}^{n})}^{\theta}\|u\|_{L^{q}(\mathbb{R}^{n})}^{1-\theta},
\ee
where $j,m$ are any integers satisfying  $0\leq j<m$, $1\leq q,r\leq \infty,$ and
\[
\frac{1}{p}-\frac{j}{n}= \theta(\frac{1}{r}-\frac{m}{n})+(1-\theta)\frac{1}{q}
\]
for all $\theta\in[\f{j}{m},\,1]$, unless $1<r<\infty$ and $m-j-\f{n}{r}$ is a nonnegative integer.

Up to now, there have been extensive  investigations
on  Gagliardo-Nirenberg inequalities involving fractional derivatives and various function spaces (see \cite{[BM],[KP],[MRR],[DDN],[HYZ],[MS],[Tartar],[SMR],[Ozawa],[Wadade1],[Wadade2],[Chikami]}). On one hand,
  the most general form of fractional Gagliardo-Nirenberg inequality in Lebesgue spaces was recently presented by  Hajaiej-Molinet-Ozawa-Wang  \cite{[HMOW]} and    by Chikami  \cite{[Chikami]}: for $0 \leq \sigma<s<\infty$ and $1\leq q, r \leq \infty,$ there holds
\be\label{Chikami}
\|\Lambda^{\sigma}u\|_{L^{p}(\mathbb{R}^{n})} \leq C\|u\|_{L^{q}(\mathbb{R}^{n})}^{\theta}\|\Lambda^{s} u\|_{L^{r}(\mathbb{R}^{n})}^{1-\theta}
\ee
with $$
\frac{n}{p}-\sigma=\theta \frac{n}{q}+(1-\theta)\left(\frac{n}{r}-s\right),
$$
where $0 \leq \theta \leq 1-\frac{\sigma}{s}\,\left(\theta \neq 0\right.$ if $\left.s-\sigma \geq \frac{n}{r}\right) $ and $\Lambda^{s}=(-\Delta)^{\f{s}{2}} $ is defined via
$\widehat{\Lambda^{s} f}(\xi)=|\xi|^{s}\hat{f}(\xi).$ For the fractional Gagliardo-Nirenberg inequality in the framework of nonhomogeneous Sobolev spaces, Besov spaces and Fourier-Herz  spaces, we refer the reader to \cite{[BM],[Chikami],[HMOW]} and references therein.  On the other hand, recent progress on Gagliardo-Nirenberg inequality in Lorentz spaces for some special cases was made in \cite{[HYZ],[MRR],[DDN]}.
In particular, Hajaiej-Yu-Zhai \cite{[HYZ]} established the Gagliardo-Nirenberg inequality for Lorentz spaces below: for  $1 \leq p, p_{2}, q, q_{1}, q_{2}<\infty, 0<\alpha<q, 0<s<n$ and $1<p_{1}<n/s$,
\be\label{hyz}
\|u\|_{L^{p, q}\left(\mathbb{R}^{n}\right)} \leq C \left\|\Lambda^{s  } u\right\|_{L^{p_{1}, q_{1}}\left(\mathbb{R}^{n}\right)}^{\frac{\alpha}{q}}\|u\|_{L^{p_{2}, q_{2}}\left(\mathbb{R}^{n}\right)}^{\frac{q-\alpha}{q}},\ee
with
$$
 \frac{\alpha}{q_{1}}+\frac{q-\alpha}{q_{2}}=1~~\text{and}~~
\alpha\left(\frac{1}{p_{1}}-\frac{s}{n}\right)+(q-\alpha) \frac{1}{p_{2}}=\frac{q}{p}.
 $$
 In \cite{[MRR]},
McCormick-Robinson-Rodrigo proved the following inequality
\be\label{mrr}
\|f\|_{L^{p}(\mathbb{R}^{n})} \leq C\|f\|_{L^{q, \infty}(\mathbb{R}^{n})}^{\theta}\|\Lambda^{s}f\|_{L^{2}(\mathbb{R}^{n})}^{1-\theta},
\ee
where
$$
\frac{1}{p}=\frac{\theta}{q}+(1-\theta)\left(\frac{1}{2}-\frac{s}{n}\right),~~ n\left(\frac{1}{2}-\frac{1}{p}\right)<s,~~1\leq q<p<\infty~~\text{and}~~s\geq 0.
$$
Subsequently, this result was improved by
Dao-D\'iaz-Nguyen \cite{[DDN]} as follows: for any $\alpha>0$,
\be\label{DDN}
 \|f\|_{L^{p, \alpha}\left(\mathbb{R}^{n}\right)} \leq C\|f\|_{L^{q, \infty}\left(\mathbb{R}^{n}\right)}^{\theta}\|\Lambda^{s}f\|_{L^{2}
 \left(\mathbb{R}^{n}\right)}^{1-\theta}
 \ee
 with$$
\frac{1}{p}=\frac{\theta}{q}+(1-\theta)\left(\frac{1}{2}-\frac{s}{n}\right),~~ n\left(\frac{1}{2}-\frac{1}{p}\right)<s,~~1\leq q<p<\infty~~\text{and}~~s\geq 0.
 $$
One of main purposes of this paper is to establish the most general form of Gagliardo-Nirenberg
inequality in Lorentz spaces, parallel to the classical version \eqref{GNI} and the fractional case  \eqref{Chikami}.
\begin{theorem}\label{the1.1}
Suppose that $u\in L^{q,\infty}(\mathbb{R}^{n})$ and $\Lambda^{s}u\in L^{r,\infty}(\mathbb{R}^{n})$.
Let $0 \leq \sigma<s<\infty$ and $1< q, r \leq \infty.  $ Then there exists a positive constant $C=C(n,q,p,r, s,\sigma)$ such that
\be\label{GNL}
\|\Lambda^{\sigma}u\|_{L^{p,1}(\mathbb{R}^{n} )} \leq C\|u\|_{L^{q,\infty}(\mathbb{R}^{n} )}^{\theta}\|\Lambda^{s} u\|_{L^{r,\infty}(\mathbb{R}^{n} )}^{1-\theta}
\ee
with
$$
\frac{n}{p}-\sigma=\theta \frac{n}{q}+(1-\theta)\left(\frac{n}{r}-s\right),
$$
where $0 <\theta < 1-\frac{\sigma}{s}\,$ and $\,s-\f{n}{r} \neq \sigma-\frac{n}{p}.$
 \end{theorem}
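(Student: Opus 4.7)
The plan is a homogeneous Littlewood--Paley decomposition $u=\sum_{j\in\mathbb{Z}}\Delta_j u$ combined with the Bernstein inequalities in Lorentz spaces established earlier in the paper, arranged so that $\|\Lambda^\sigma u\|_{L^{p,1}(\mathbb{R}^n)}$ is estimated directly by dyadic summation, with no interpolation step at the end. For an auxiliary cutoff $N\in\mathbb{Z}$, split
\[
\Lambda^\sigma u=\sum_{j<N}\Lambda^\sigma\Delta_j u+\sum_{j\ge N}\Lambda^\sigma\Delta_j u,
\]
and, using that $L^{p,1}$ is a Banach space for $1<p<\infty$, majorize $\|\Lambda^\sigma u\|_{L^{p,1}}$ by the sum of the norms of the individual pieces.

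To control each dyadic piece I would apply the strict-index Bernstein inequality in Lorentz spaces: for $g$ with Fourier support in an annulus of size $2^j$ and $1<\mu<\nu<\infty$, one has
\[
\|g\|_{L^{\nu,1}}\lesssim 2^{jn(1/\mu-1/\nu)}\|g\|_{L^{\mu,\infty}},
\]
the key feature being that the strict loss $\mu<\nu$ is what permits upgrading the second Lorentz index from $\infty$ on the right to $1$ on the left (via a Young--O'Neil convolution against the Littlewood--Paley bump, which lies in every $L^{a,1}$). On each low-frequency piece this gives $\|\Lambda^\sigma\Delta_j u\|_{L^{p,1}}\lesssim 2^{j\alpha}\|u\|_{L^{q,\infty}}$ with $\alpha=\sigma+\tfrac{n}{q}-\tfrac{n}{p}$. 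On each high-frequency piece, rewrite $\Lambda^\sigma\Delta_j u=\Lambda^{\sigma-s}\Delta_j(\Lambda^s u)$ and apply Bernstein with data $\Lambda^s u\in L^{r,\infty}$ to get $\|\Lambda^\sigma\Delta_j u\|_{L^{p,1}}\lesssim 2^{j\beta}\|\Lambda^s u\|_{L^{r,\infty}}$ with $\beta=\sigma-s+\tfrac{n}{r}-\tfrac{n}{p}$.

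The scaling identity rewrites these as $\alpha=(1-\theta)(s+\tfrac{n}{q}-\tfrac{n}{r})$ and $\beta=-\theta(s+\tfrac{n}{q}-\tfrac{n}{r})$, so the excluded equality $s-\tfrac{n}{r}=\sigma-\tfrac{n}{p}$ is exactly $s+\tfrac{n}{q}=\tfrac{n}{r}$. Under the hypothesis this common quantity is nonzero, hence $\alpha$ and $\beta$ have strict opposite signs; attaching whichever bound decays with $j$ to the appropriate side of $N$ (swapping the roles of low and high frequency according to the sign) makes both geometric series converge, yielding
\[
\|\Lambda^\sigma u\|_{L^{p,1}}\lesssim 2^{N\alpha}\|u\|_{L^{q,\infty}}+2^{N\beta}\|\Lambda^s u\|_{L^{r,\infty}}.
\]
Optimizing $N$ so that the two contributions balance produces $\|u\|_{L^{q,\infty}}^\theta\|\Lambda^s u\|_{L^{r,\infty}}^{1-\theta}$ with the correct exponent $\theta=-\beta/(\alpha-\beta)$ dictated by the scaling.

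The main obstacle is really the Lorentz Bernstein inequality sending $L^{\mu,\infty}$ data on a frequency-localized function to an $L^{\nu,1}$ output; this is the true workhorse because it lets the argument land in the smallest Lorentz space $L^{p,1}$ rather than merely in $L^{p,\infty}$. It is only the strict loss $\mu<\nu$, guaranteed here by the open condition $0<\theta<1-\tfrac{\sigma}{s}$ together with $s-\tfrac{n}{r}\neq\sigma-\tfrac{n}{p}$, that enables the upgrade of the second Lorentz index. Verifying the equivalence of the two exclusion conditions $s-\tfrac{n}{r}\neq\sigma-\tfrac{n}{p}$ and $s+\tfrac{n}{q}\neq\tfrac{n}{r}$, and tracking that the Bernstein constants depend only on $n,q,r,p,s,\sigma$, are the only remaining bookkeeping points.
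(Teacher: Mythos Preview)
Your Littlewood--Paley scheme is essentially the route the paper uses to prove its Theorem~1.2 (the Besov--Lorentz version), not the direct proof it gives for Theorem~1.1. As written, however, your argument has a genuine gap.

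The Bernstein inequality $\|g\|_{L^{\nu,1}}\lesssim 2^{jn(1/\mu-1/\nu)}\|g\|_{L^{\mu,\infty}}$ that you invoke requires the \emph{strict} inequality $\mu<\nu$, as you yourself note. But the hypotheses of the theorem do \emph{not} force both $q<p$ and $r<p$. For instance, with $n=3$, $\sigma=0$, $s=1$, $q=2$, $r=10$, $\theta=\tfrac12$, one computes $p=\tfrac{15}{2}$, so $r>p$ and your high-frequency bound via $\Lambda^{s}u\in L^{r,\infty}$ is unavailable. Swapping the roles of low and high frequency according to the sign of $s+\tfrac{n}{q}-\tfrac{n}{r}$ does not rescue this: that swap only decides in which direction each geometric series is summed, it does not manufacture the missing Bernstein step when the Lebesgue exponent moves the wrong way. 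In the paper's proof of Theorem~1.2 the problematic configurations ($q<p<r$, $r<p<q$, and the borderline cases $p=q$ or $p=r$) are treated separately, typically by replacing the failing Bernstein step with the Lorentz interpolation characteristic
\[
\|\dot\Delta_j u\|_{L^{p,1}}\le C\|\dot\Delta_j u\|_{L^{q,\infty}}^{\alpha}\|\dot\Delta_j u\|_{L^{r,\infty}}^{1-\alpha}
\]
when $p$ lies strictly between $q$ and $r$, or by interpolating against an auxiliary exponent $(1+\varepsilon)p$ when $p$ coincides with one of $q,r$.

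For comparison, the paper's own proof of Theorem~1.1 is organized quite differently: it splits into the subcritical, critical, and supercritical regimes according to the sign of $s-\sigma-\tfrac{n}{r}$, and its core tool is the pointwise interpolation $|\Lambda^{\sigma}u(x)|\le C(\mathcal{M}u(x))^{1-\sigma/s}(\mathcal{M}\Lambda^{s}u(x))^{\sigma/s}$ combined with the Lorentz Sobolev embedding and the interpolation characteristic. Your dyadic approach, once patched with the case analysis just described, does yield a valid alternative proof---and indeed the paper remarks that Theorem~1.2 implies Theorem~1.1---but you should be aware that the clean two-term split you sketched only covers the situation $p>\max\{q,r\}$.
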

\begin{remark}
The motivation of this theorem is twofold. On one hand, it is worth noting that Gagliardo-Nirenberg inequalities \eqref{hyz}-\eqref{DDN} in Lorentz spaces mentioned above are only concerned with the subcritical case or the case that $q<p$, and the left-hand side of estimates \eqref{hyz}-\eqref{DDN} is without higher derivatives. On the other hand, since there are some extra restrictions on the second index of Lorentz norms in \eqref{hyz}-\eqref{DDN} and that in the results of \cite{[KP]}, it seems that their versions of Gagliardo-Nirenberg inequalities for Lorentz spaces are not easily applicable in the field of
partial differential equations. We emphasize here that Theorem \ref{the1.1} removes all the aforementioned restrictions and covers most of the possible range of exponents.
\end{remark}
\begin{remark}
It is essential to make an exception that $\,s-\f{n}{r} \neq \sigma-\frac{n}{p}\,$ in this theorem, otherwise the generalized Gagliardo-Nirenberg inequality \eqref{GNL} is invalid for $u(x)=|x|^{s-\f{n}{r}}$. We also remark that the restriction that $q, r >1$ results from an application of Lemma \ref{lem3.1}.
\end{remark}
\begin{remark} A special case of \eqref{GNL} is that
$$\|u\|_{L^{p, p_{1}} (\mathbb{R}^{n} )} \leq C  \|\Lambda^{s   } u \|_{L^{r , \infty}\left(\mathbb{R}^{n}\right)}^{1-\theta}\|u\|_{L^{q, \infty} (\mathbb{R}^{n} )}^{ \theta},~~1<q,r\leq \infty,~1\leq p_{1}\leq \infty,~s>0,$$
where
$$
\frac{n}{p}= (1-\theta)\left(\frac{n}{r}-s\right)+\frac{ n\theta}{q},~~0 <\theta < 1.
$$
This result still extends the aforementioned inequalities \eqref{hyz}-\eqref{DDN} for
Lorentz spaces.
\end{remark}
Inspired by the work \cite{[Chikami]},
we shall divide the proof of Theorem \ref{the1.1} into three cases by the relationship between $s-\sigma$ and $n/r$. Firstly, we focus on the subcritical case $s-\sigma<n/r$.  Rough speaking,  we observe that there exist at least four  kinds of equivalent definitions of Lorentz norms (see  \eqref{inorm} for details). This together with the pointwise interpolation estimate for derivatives \eqref{bcd} given in \cite{[BCD]} enables us to prove that
\be\label{suppre11}
\|\Lambda^{\sigma}u\|_{L^{p,\infty}(\mathbb{R}^{n} )} \leq C\|u\|_{L^{q,\infty}(\mathbb{R}^{n} )}^{1-\frac{\sigma}{s}}\|\Lambda^{s}u\|_{L^{r,\infty}(\mathbb{R}^{n} )}^{\frac{\sigma}{s}},
~~\text{
with}~~
\frac{1}{p}=\left(1-\frac{\sigma}{s}\right) \frac{1}{q}+\frac{\sigma}{s r}.
\ee
Then, making use of the interpolation characteristic \eqref{Interpolation characteristic} and Sobolev inequality in Lorentz spaces, we can obtain the desired estimates in this case. The second case is devoted to dealing with the critical case $s-\sigma=n/r$. To this end, in the spirit of   \cite{[Chikami],[Wadade1],[Wadade2]}, we establish the following estimate in the critical Besov-Lorentz spaces
\be\label{crokey1}
 \ba
\|u\|_{L^{q,l}(\mathbb{R}^{n} )}
\leq C \|u\|_{L^{p,\infty}(\mathbb{R}^{n} )}^{\f{p}{q}}  \| u\|^{1-\f{p}{q}}_{\dot{B}^{\f{n}{r}}_{r,\infty,\infty}},~~\text{with}~~p<q.
\ea
\ee
To achieve this, various Bernstein inequalities  \eqref{bern1}-\eqref{bern4} in  Lorentz spaces are derived. The new Bernstein inequality \eqref{bern4}  allows us to get the fact that
\be\label{keyinclue1}
 \|f\|_{ \dot{B}^{s}_{p,\infty,\infty }}\leq C\|\Lambda^{s}f\|_{L^{p,\infty}(\mathbb{R}^{n})}.
\ee
Combining \eqref{suppre11}, \eqref{crokey1} and \eqref{keyinclue1},  we may prove the
critical case.  For the third case, we proceed with the case $s-\sigma> n/r$ by setting up the following key estimate
$$
\|u\|_{L^{\infty}(\mathbb{R}^{n} )} \leq C\|u\|_{L^{p,\infty}(\mathbb{R}^{n})}^{\theta}\|u\|_{\dot{B}^{s}_{r,\infty,\infty}}^{1-\theta},~~\text{with}~~0=\theta \frac{n}{p}+(1-\theta)\left(\frac{n}{r}-s\right),~~0<\theta\leq 1.
$$
By a similar argument used in the previous two cases, this yields the generalized Gagliardo-Nirenberg inequality \eqref{GNL} under the supcritical case, which concludes Theorem \ref{the1.1}.

Furthermore, it should be stated that the Bernstein inequalities  \eqref{bern2}-\eqref{bern3} together with low-high frequency techniques as in
\cite{[Chikami]} also guarantee the following generalized Gagliardo-Nirenberg inequality in the framework of Besov-Lorentz spaces, which extends the corresponding results in \cite{[Chikami],[HMOW]}.
 \begin{theorem}\label{the1.2}
   Assume that $u \in \dot{B}_{r,\infty, \infty}^{s}\left(\mathbb{R}^{n}\right) \cap \dot{B}_{q,\infty, \infty}^{0}\left(\mathbb{R}^{n}\right) $ with $1 < q, r \leq \infty$ and $0 \leq \sigma<s<\infty.$ Then there exists a positive constant $C=C(n,q,p,r,s,\sigma)$ such that
\be\label{glgn}
\|u\|_{\dot{B}_{p,1, 1}^{\sigma}} \leq C\|u\|_{\dot{B}_{q,\infty, \infty}^{0}}^{\theta}\|u\|_{\dot{B}_{r,\infty, \infty}^{s}}^{1-\theta},
\ee
with
$$
\frac{n}{p}-\sigma=\theta \frac{n}{q}+(1-\theta)\left(\frac{n}{r}-s\right),~~0<\theta<1-\frac{\sigma}{s},~~s-\f{n}{r} \neq \sigma-\frac{n}{p}.
$$
 \end{theorem}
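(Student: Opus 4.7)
The plan is to expand the Besov--Lorentz norm through the Littlewood--Paley decomposition and execute a low/high frequency split in the spirit of Chikami \cite{[Chikami]}. Writing $\|u\|_{\dot{B}^{\sigma}_{p,1,1}}=\sum_{j\in\mathbb{Z}} 2^{j\sigma}\|\Delta_j u\|_{L^{p,1}}$, I would bound each dyadic block in two different ways by invoking the Lorentz-space Bernstein inequalities advertised in the introduction. Since $\Delta_j u$ is spectrally localised in a dyadic annulus of size $2^{j}$, these yield, after pulling out the relevant dyadic factors,
\begin{equation*}
2^{j\sigma}\|\Delta_j u\|_{L^{p,1}} \leq C\,2^{j\alpha}\,\|u\|_{\dot{B}^{0}_{q,\infty,\infty}}, \qquad 2^{j\sigma}\|\Delta_j u\|_{L^{p,1}} \leq C\,2^{j\beta}\,\|u\|_{\dot{B}^{s}_{r,\infty,\infty}},
\end{equation*}
with $\alpha=\sigma+n(1/q-1/p)$ and $\beta=\sigma-s+n(1/r-1/p)$.

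Using the scaling identity $\frac{n}{p}-\sigma=\theta\frac{n}{q}+(1-\theta)(\frac{n}{r}-s)$, a short computation gives
\begin{equation*}
\alpha=(1-\theta)\bigl(s+n(1/q-1/r)\bigr), \qquad -\beta=\theta\bigl(s+n(1/q-1/r)\bigr),
\end{equation*}
so $\alpha$ and $-\beta$ share a common sign, and that sign is non-zero precisely when $s-n/r\neq\sigma-n/p$ (the condition $0<\theta<1-\sigma/s$ ensures both prefactors are positive). Assume first this common sign is positive; the opposite sign is handled symmetrically by exchanging the roles of the two Bernstein bounds in the two parts of the split. Then, for any cut-off $N\in\mathbb{Z}$, summing the resulting geometric series on each side of the split gives
\begin{equation*}
\|u\|_{\dot{B}^{\sigma}_{p,1,1}}\leq C\bigl(2^{N\alpha}\|u\|_{\dot{B}^{0}_{q,\infty,\infty}}+2^{N\beta}\|u\|_{\dot{B}^{s}_{r,\infty,\infty}}\bigr).
\end{equation*}
Optimising over $N$ by the choice $2^{N}\sim(\|u\|_{\dot{B}^{s}_{r,\infty,\infty}}/\|u\|_{\dot{B}^{0}_{q,\infty,\infty}})^{1/(\alpha-\beta)}$, together with the identities $-\beta/(\alpha-\beta)=\theta$ and $\alpha/(\alpha-\beta)=1-\theta$ that follow directly from the above display, produces the desired estimate \eqref{glgn}.

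The main technical obstacle I anticipate is ensuring that the two Bernstein bounds hold uniformly across the entire admissible range $1<q,r\leq\infty$, without inheriting any spurious condition on the secondary Lorentz indices. The endpoint cases $q=\infty$ or $r=\infty$ and the configurations where one of $q,r$ exceeds $p$ (so that a direct Bernstein estimate from $L^{q,\infty}$ into $L^{p,1}$ is not literally available) are the delicate points; to handle them I would pass through an auxiliary primary index slightly perturbed from $q$ (respectively $r$), using an embedding of the form $\dot{B}^{0}_{q,\infty,\infty}\hookrightarrow\dot{B}^{0}_{q',\infty,\infty}$, so that the Bernstein inequalities stated earlier in the paper apply cleanly. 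Once the frequency-localised bounds are in place, the low/high optimisation above proceeds verbatim and yields \eqref{glgn}.
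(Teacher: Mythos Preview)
Your basic strategy---the low/high frequency split, two Bernstein-type dyadic bounds, geometric summation, and optimisation over the cut-off---is exactly what the paper does and covers correctly the cases where both $q\le p$ and $r\le p$ (these are the paper's cases ($I$), ($II_1$), ($III_{11}$)). Your identification of the exponents $\alpha,\beta$ and the relations $-\beta/(\alpha-\beta)=\theta$, $\alpha/(\alpha-\beta)=1-\theta$ is also accurate.

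The gap is in your proposed fix for the configurations where one of $q,r$ exceeds $p$. When, say, $q>p$, your first displayed bound requires $\|\Delta_j u\|_{L^{p,1}}\le C\,2^{jn(1/q-1/p)}\|\Delta_j u\|_{L^{q,\infty}}$, which is a \emph{reverse} Bernstein inequality and is simply false. You propose to repair this via an embedding $\dot{B}^{0}_{q,\infty,\infty}\hookrightarrow\dot{B}^{0}_{q',\infty,\infty}$, but such an embedding holds only for $q'\ge q$, which moves $q'$ further from $p$ rather than closer; it cannot produce a $q'<p$ on which Bernstein applies. So this route does not close the argument.

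What the paper does instead in these cases is to use the real interpolation characteristic of Lorentz spaces at each fixed frequency: when $p$ lies strictly between $q$ and $r$ one has
\[
\|\Delta_j u\|_{L^{p,1}}\le C\,\|\Delta_j u\|_{L^{q,\infty}}^{\alpha}\,\|\Delta_j u\|_{L^{r,\infty}}^{1-\alpha},\qquad \tfrac{1}{p}=\tfrac{\alpha}{q}+\tfrac{1-\alpha}{r},
\]
and this mixed bound replaces the unavailable Bernstein estimate on one side of the split (cases ($II_2$), ($III_{21}$), ($III_{22}$)). When $p$ coincides with $q$ or $r$, the paper perturbs one endpoint to $(1+\varepsilon)p$ and again interpolates (cases ($II_3$), ($III_{12}$)). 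Once you substitute this device for your embedding idea, the rest of your outline goes through essentially as written.
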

\begin{remark}
This theorem implies several versions of Gagliardo-Nirenberg inequalities, such as Theorem \ref{the1.1} for Lorentz spaces and \cite[Theorem 5.3]{[Chikami]} for Besov spaces. Due to \cite{[HMOW]}, it is necessary to make an assumption that $\,s-\f{n}{r} \neq \sigma-\frac{n}{p}\,$ in Theorem \ref{the1.2}.
\end{remark}
It is worth pointing out that there has existed an extensive study on Besov-Lorentz spaces (see \cite{[Wadade2],[KL],[ST],[YCP]} and references therein).
In \cite{[Wadade2]}, Wadade presented the critical Besov-Lorentz inequality \eqref{crokey1} under the case that $q>\max\{p,r\}$, and he also proved that
\be\label{Wadade}
\|u\|_{L^{q_{1}, q_{2}}(\mathbb{R}^{n} )} \leq C \|u\|_{L^{p_{1}, p_{2}}(\mathbb{R}^{n} )}^{\frac{p_{1}}{q_{1}}}\left\|\Lambda^{\frac{n}{ p_{1}}} u\right\|_{L^{p_{1}, p_{2}}(\mathbb{R}^{n} )}^{1-\frac{p_{1}}{q_{1}}},~~\text{with}~~1<p_{1}\leq q_{1}<\infty,~1\leq p_{2}\leq q_{2}\leq \infty,
\ee
which improves the classical result below due to Ozawa \cite{[Ozawa]}
\be\label{Ozawa}
\|u\|_{L^{q}(\mathbb{R}^{n} )} \leq C \|u\|_{L^{p}(\mathbb{R}^{n} )}^{\frac{p}{q}}\left\|\Lambda^{\frac{n}{ p}} u\right\|_{L^{p}(\mathbb{R}^{n} )}^{1-\frac{p}{q}},~~\text{with}~~1<p\leq q<\infty.
\ee
Combining the Besov-Lorentz inequality \eqref{crokey1} and the embedding relation in Lemma \ref{lem2.5}, we have the following corollary.
\begin{coro}
For $1<p<q<\infty,\,1<r<\infty$ and $1\leq l\leq\infty$, there exists a positive constant $C=C(n,q,p,r,l)$ such that
\begin{align}
&
\|u\|_{L^{q,l}(\mathbb{R}^{n} )}
\leq C \|u\|_{L^{p,\infty}(\mathbb{R}^{n} )}^{\f{p}{q}}  \| \Lambda^{\f{n}{r}}u\|^{1-\f{p}{q}}_{L^{r,\infty}(\mathbb{R}^{n} )},
\\
&\|u\|_{L^{q,l}(\mathbb{R}^{n} )}
 \leq C \|u\|_{L^{p,\infty}(\mathbb{R}^{n} )}^{\f{p}{q}}  \| u\|^{1-\f{p}{q}}_{\dot{F}^{\f{n}{r}}_{r,\infty,\infty}},
\\
&\|u\|_{L^{q,l}(\mathbb{R}^{n} )}
\leq C \|u\|_{L^{p,\infty}(\mathbb{R}^{n} )}^{\f{p}{q}}  \| u\|^{1-\f{p}{q}}_{\dot{B}^{\f{n}{r}}_{r,\infty,\infty}}.
\end{align}
\end{coro}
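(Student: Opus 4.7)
The plan is to treat the three displayed inequalities as a single chain: the last one is already \eqref{crokey1}, and the first two drop out of it once the Besov--Lorentz norm on its right-hand side is dominated by $\|\Lambda^{n/r}u\|_{L^{r,\infty}}$ and by $\|u\|_{\dot{F}^{n/r}_{r,\infty,\infty}}$ respectively. The master inequality therefore does all the work, and the corollary is essentially a packaging statement.

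First, for the third inequality I would simply invoke \eqref{crokey1} directly; this was established earlier through the Bernstein estimates in Lorentz spaces (notably \eqref{bern1}--\eqref{bern4}) together with a Littlewood--Paley decomposition, and the hypotheses $1<p<q<\infty$ and $1\leq l\leq\infty$ are precisely the admissible range there.

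Second, for the first inequality I would combine \eqref{crokey1} with the new Bernstein-type embedding \eqref{keyinclue1} specialized to $s=n/r$ and exponent $r\in(1,\infty)$, which gives
$$
\|u\|_{\dot{B}^{n/r}_{r,\infty,\infty}}\leq C\,\|\Lambda^{n/r}u\|_{L^{r,\infty}(\mathbb{R}^{n})}.
$$
Raising both sides to the power $1-p/q\in(0,1)$ and substituting into \eqref{crokey1} produces the first estimate. For the second inequality the only extra ingredient is the standard embedding $\dot{F}^{n/r}_{r,\infty,\infty}\hookrightarrow \dot{B}^{n/r}_{r,\infty,\infty}$, which is the case of the Besov--Triebel--Lizorkin comparison in which the $\ell^{\infty}$ inner norm can be pulled outside the $L^{r,\infty}$ outer norm; this is the content of the relevant portion of Lemma~\ref{lem2.5}. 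A further raising to the power $1-p/q$ and substitution into \eqref{crokey1} finishes the argument.

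I do not anticipate any substantive obstacle: all of the depth has been absorbed into the preceding inequality \eqref{crokey1} and into the pointwise/Lorentz-valued Bernstein bound underlying \eqref{keyinclue1}. The only bookkeeping point worth checking is that the embeddings quoted from Lemma~\ref{lem2.5} remain valid when the fine index equals $\infty$ and the outer norm is Lorentz rather than Lebesgue; this in turn reduces to the monotonicity of $\ell^{q}$ in $q$ after the Littlewood--Paley decomposition, so no new technical input is required.
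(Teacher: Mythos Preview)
Your proposal is correct and matches the paper's approach exactly: the paper states the corollary immediately after \eqref{crokey1} and simply says it follows by ``combining the Besov-Lorentz inequality \eqref{crokey1} and the embedding relation in Lemma~\ref{lem2.5},'' which is precisely the chain you describe. No further argument is given (or needed) in the paper.
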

\begin{remark}
This corollary generalizes the critical interpolation inequalities \eqref{Wadade} and \eqref{Ozawa}. The results obtained here can be applied to deduce the Trudinger-Moser type inequality as in \cite{[Ozawa],[Wadade1],[Wadade2]}.
\end{remark}
Next, as an application of the generalized Gagliardo-Nirenberg inequalities \eqref{GNL} and \eqref{glgn} in Lorentz type spaces, we shall derive some new conditions for energy  equality of the Leray-Hopf weak solutions to the 3D Navier-Stokes equations. For the study of incompressible hydrodynamics equations in Lorentz spaces, we refer the reader to \cite{[Barraza],[JWW],[CF]}.

\subsection{Energy conservation in the Navier-Stokes system}
The 3D incompressible Navier-Stokes system can be written as
\be\left\{\ba\label{NS}
&v_{t} -\Delta  v+ v\cdot\ti
v  +\nabla \Pi=0, \\
&\Div v=0,\\
&v|_{t=0}=v_0,
\ea\right.\ee
 where  the unknown vector $v=v(x,t)$ describes the flow  velocity field, and the scalar function $\Pi$ represents the   pressure.
 The  initial datum $v_{0}$ is given and satisfies the divergence-free condition. It is known that regular solutions to the 3D Navier-Stokes equations \eqref{NS} satisfy the energy equality
  $$
 \|v(T)\|_{L^{2}(\mathbb{R}^{3})}^{2}+2 \int_{0}^{T}\|\nabla v\|_{L^{2}(\mathbb{R}^{3})}^{2}ds= \|v_0\|_{L^{2}(\mathbb{R}^{3})}^{2}.
 $$
However,  the   global Leray-Hopf weak solutions of the 3D Navier-Stokes equations \eqref{NS} just obey
 the energy inequality
 $$
 \|v(T)\|_{L^{2}(\mathbb{R}^{3})}^{2}+2 \int_{0}^{T}\|\nabla v\|_{L^{2}(\mathbb{R}^{3})}^{2}ds\leq \|v_0\|_{L^{2}(\mathbb{R}^{3})}^{2}.
 $$
The criteria for energy equality of the Leray-Hopf weak solutions have been established by several authors (see e.g. \cite{[Lions],[Shinbrot],[Taniuchi],[BY],[CCFS],[CL],[BC],[CCFS],[Zhang]} and references therein). We list some known results in this direction below.
A Leray-Hopf weak solution $v$ to the Navier-Stokes equations \eqref{NS} satisfies the energy equality if one of the following conditions holds
\begin{itemize}
 \item
 Lions \cite{[Lions]}:  $v\in L^{4}(0,T;L^{4}(\mathbb{R}^{3}));$
\item Shinbrot \cite{[Shinbrot]}:
\be\label{Shinb}
v\in L^{p}(0,T;L^{q}(\mathbb{R}^{3})),~\text{with}~\f{2}{p}+
 \f{2}{q}=1~\text{and}~q\geq 4;\ee
\item Taniuchi \cite{[Taniuchi]}, Beirao da Veiga-Yang \cite{[BY]}:
$v\in L^{p}(0,T;L^{q}(\mathbb{R}^{3})),$~with~$\f{2}{p}+
 \f{2}{q}=1$~and~$q\geq 4,$~or~$\f{1}{p}+
 \f{3}{q}=1$~and~$3<q< 4;$
\item  Cheskidov-Constantin-Friedlander-Shvydkoy \cite{[CCFS]}: $v\in L^{3}(0,T;B^{\f13}_{3,\infty}(\mathbb{R}^{3}));$
\item Cheskidov-Luo \cite{[CL]}: $v\in L^{\beta,\infty}(0,T;B^{\f2\beta+\f2p-1}_{p,\infty}(\mathbb{R}^{3})),$~with~$\f2p+\f1\beta<1$~and~$1\leq\beta<p\leq\infty;$
\item Berselli-Chiodaroli  \cite{[BC]}, Zhang \cite{[Zhang]}:
\be\label{bcz}
\nabla v \in L^{p}\left(0, T ; L^{q}\left(\mathbb{R}^{3}\right)\right),~\text{with}~
\frac{1}{p}+\frac{3 }{q}=2~\text{and}~\frac{3 }{2}<q<\frac{9}{5},~\text{or}~
\frac{1}{p}+\frac{6}{5 q}=1~\text{and}~q \geq \frac{9}{5}.\ee
   \end{itemize}
From the work of Cheskidov-Luo \cite{[CL]}, we see that for Shinbrot's condition \eqref{Shinb}
     the Lebesgue spaces in time direction can be replaced by Lorentz spaces. Since the Lorentz spaces $L^{r,\infty}$ are larger than the Lebesgue spaces $L^{r}$ in general,
    a natural question arises whether energy equality holds for the Leray-Hopf weak solution $v$ whose space direction belongs to Lorentz spaces. Our next main result gives a partially affirmative answer.
 \begin{theorem}\label{the1.4}
The energy equality of Leray-Hopf weak solutions $v$ to the 3D Navier-Stokes equations \eqref{NS} is valid if one of the following five conditions is satisfied
 \begin{align}
 (1)\quad& v\in L^{4}(0,T;L^{4,\infty}(\mathbb{R}^{3}));\label{EIL001} \\
 (2)\quad& v\in L^{p,\infty}(0,T;L^{q,\infty}(\mathbb{R}^{3})), ~~\text{with}~~ \f{2}{p}+
 \f{2}{q}=1, ~~q> 4;\label{EIL1}\\
 (3)\quad& v\in L^{p}(0,T;L^{q,\infty}(\mathbb{R}^{3})), ~~\text{with}~~
   \f{1}{p}+
 \f{3}{q}=1,~~ 3<q< 4;\label{EIL2}\\
 (4)\quad& \nabla v \in L^{p}\left(0, T ; L^{q,\infty}\left(\mathbb{R}^{3}\right)\right), ~~\text{with}~~
\frac{1}{p}+\frac{3 }{q}=2, ~~  \frac{3 }{2}<q<\f{9}{5};\label{EIL3}\\
 (5)\quad& \Lambda^{s} v \in L^{p}\left(0, T ; L^{q,\infty}\left(\mathbb{R}^{3}\right)\right), ~~\text{with}~~
\frac{1}{p}+\frac{6}{5 q}=\f{2s}{5}+\f{3}{5},~~s>1,~~q>1,~~\frac{1}{s}<p<3. \label{EIL4}
\end{align}
 \end{theorem}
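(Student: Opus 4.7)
The plan is to follow the classical commutator approach: test the Navier--Stokes system \eqref{NS} against the time-mollification $v_\epsilon := v \ast_t \eta_\epsilon$ of the Leray--Hopf solution itself, with $\eta_\epsilon(t) = \epsilon^{-1}\eta(t/\epsilon)$ a standard time mollifier. Since $v_\epsilon\to v$ strongly in $L^\infty(0,T;L^2)\cap L^2(0,T;\dot H^1)$, the linear and pressure terms produce the standard energy identity, and the energy equality reduces to
\[
\lim_{\epsilon\to 0}\int_0^T\!\!\int_{\mathbb{R}^3}(v\otimes v):\nabla v_\epsilon\,dx\,dt = 0,
\]
where the divergence-free condition has been used to rewrite $\int (v\cdot\nabla v)\cdot v_\epsilon = -\int (v\otimes v):\nabla v_\epsilon$. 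In each of the five cases the task is therefore twofold: (a) establish an $\epsilon$-uniform bound on this trilinear form by H\"older's inequality in Lorentz spaces, and (b) identify its weak limit as zero by the usual commutator argument.

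For conditions (1)--(3) I would combine the hypothesis with the energy regularity $v\in L^\infty_t L^2_x\cap L^2_t\dot H^1_x$ through the Lorentz Gagliardo--Nirenberg inequality \eqref{GNL} of Theorem \ref{the1.1}, producing an intermediate Lorentz space in which $v\otimes v$ pairs absolutely against $\nabla v$ via O'Neil's Lorentz H\"older inequality. For conditions (4) and (5), where a derivative hypothesis is imposed, I would first rewrite the trilinear form by integration by parts and Riesz-transform identities so that $\nabla v$ (respectively $\Lambda^s v$) appears explicitly, and then apply Theorem \ref{the1.1} to interpolate $v$ between its energy bound and the hypothesized derivative regularity; the scaling exponents in \eqref{EIL1}--\eqref{EIL4} match those of \eqref{GNL} exactly, so the resulting H\"older product closes in $L^1_{x,t}$.

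The main obstacle is case (2), where both the temporal and spatial norms sit in weak Lorentz spaces. Since the pairing $L^{p,\infty}_t\cdot L^{p',\infty}_t$ only lies in $L^{1,\infty}_t$, Lorentz H\"older with second index $\infty$ on both sides fails to close. The remedy is the principal new feature of Theorem \ref{the1.1}, namely that the left-hand side of \eqref{GNL} is controlled in $L^{p,1}$ even when both arguments on the right are only in $L^{*,\infty}$. Using this to upgrade the spatial second Lorentz index of one factor from $\infty$ to $1$ at the cost of an energy contribution, O'Neil's pairing $L^{p,\infty}_t\cdot L^{p',1}_t\hookrightarrow L^1_t$ then restores integrability in time. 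A subsidiary commutator estimate $[\eta_\epsilon\ast_t\,,\,v\cdot\nabla]v\to 0$ in the same topology completes the passage to the limit and yields the energy identity.
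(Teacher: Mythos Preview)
Your time-mollification scheme is a reasonable starting point for cases (3)--(5), and indeed the paper does something even more economical there: it uses the new Gagliardo--Nirenberg inequality \eqref{GNL} (resp.\ \eqref{glgn}) only to show that the hypothesis implies an already-known Lebesgue-space criterion---$v\in L^{4}(0,T;L^{4})$ for (3), and $\nabla v\in L^{\tilde p}(0,T;L^{\tilde q})$ within the Berselli--Chiodaroli range \eqref{bcz} for (4) and (5)---and then simply quotes those results. No direct trilinear estimate or integration-by-parts manipulation is needed.

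The genuine gap is in cases (1) and (2). Your proposed fix for (2)---using \eqref{GNL} to upgrade a \emph{spatial} Lorentz second index to $1$ so as to close a \emph{temporal} pairing $L^{p,\infty}_t\cdot L^{p',1}_t\hookrightarrow L^{1}_t$---conflates the two variables. Theorem \ref{the1.1} is a purely spatial inequality; the time regularity of $t\mapsto\|v(t)\|_{L^{a,1}_x}$ is inherited from the time regularity of the inputs on the right of \eqref{GNL}, and since one of those inputs is $\|v(t)\|_{L^{q,\infty}_x}\in L^{p,\infty}_t$, the output cannot be better than weak-in-time. More concretely, one can check that for case (1) every H\"older splitting of $\int_0^T\!\int|v|^2|\nabla v_\epsilon|\,dx\,dt$ using $v\in L^{4}_tL^{4,\infty}_x$ together with the energy classes $L^{\infty}_tL^{2}_x$, $L^{2}_tL^{6,2}_x$, $\nabla v\in L^{2}_tL^{2}_x$ fails to land in $L^{1}_t$: the two constraints $1/p_1+1/p_2\le 1/2$ and $1/q_1+1/q_2\le 1/2$ are incompatible along the available interpolation lines. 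The Lions condition $L^{4}_tL^{4}_x$ is exactly borderline for this trilinear form, and the weak space $L^{4,\infty}_x$ sits strictly outside.

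The paper's route for (1) and (2) is therefore essentially different. It tests against the Littlewood--Paley truncation $S_Qv$ (following \cite{[CCFS],[CL]}) and invokes the frequency-localized commutator bound \eqref{CCFSCL}, whose extra decay factor $2^{-2|k-Q|/3}$ supplies the slack that the mollification approach lacks. For case (2) the paper furthermore performs a $Q$-adaptive splitting of the time interval into $I_Q=\{t:\|v(t)\|_{L^{q,\infty}}>2^{Q(q-2)/q}\}$ and its complement, and controls each piece via the distribution-function characterization of the $L^{p,\infty}_t$ hypothesis (see \eqref{215.2} and \eqref{5.4844}). This coupling of the frequency parameter $Q$ to a level-set decomposition in time is the key mechanism for handling weak-in-time data; it has no counterpart in the time-mollification framework you describe, and without it cases (1) and (2) remain out of reach.
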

 \begin{remark}
 Theorem
\ref{the1.4} here refines the corresponding results in \cite{[Shinbrot],[BY],[Taniuchi],[BC],[Zhang]}. It is worth pointing out that
the proof of \eqref{EIL2} and \eqref{EIL4} strongly rests on the Gagliardo-Nirenberg type
inequality \eqref{GNL} without restrictions on the second index of Lorentz norms.
 \end{remark}
\begin{remark}
Following the path in the proof of \eqref{EIL4} and using \eqref{glgn}, one can prove the following condition via Besov-Lorentz spaces for energy equality
$$
v\in L^{p}(0,T;\dot{B}^{\f{5}{2p}+\f3q-\f32}_{q,\infty,\infty}(\mathbb{R}^{3})),~~\text{with}
~~q>1,~~0<p<3~~\text{and}~~\f{5}{2p}+\f3q-\f32>\max\{1,\f{1}{p}\}.
$$
We remark that even the stronger version of this condition, with space direction in Besov spaces $\dot{B}^{\f{5}{2p}+\f3q-\f32}_{q,\infty}(\mathbb{R}^{3}), $ still improves the results involving nonhomogeneous Besov spaces in  \cite{[CL]}.
\end{remark}
\begin{remark}
According to   the boundedness of Riesz transform on Lorentz spaces,
$\nabla v $  in \eqref{EIL3} can be replaced  by its symmetrical part vorticity  $\text{curl}\,v$ or its antisymmetric part $\f{1}{2}(\nabla v-\nabla v^{^{\text{T}}}) $.
\end{remark}
\begin{remark}
To the best of authors' knowledge, it remains an open problem to show that
energy equality can be derived from the following condition
$$v\in L^{4,\infty}(0,T;L^{4,\infty}(\mathbb{R}^{3})).$$
\end{remark}
\begin{remark}
Eventually, we would like to mention that an energy conservation criterion via a combination of velocity and its gradient for the equations \eqref{NS} in $\mathbb{T}^{3}$ was recently established in \cite{[WY]}.
\end{remark}

The rest of this  paper is organized as follows. In Section 2,
we  recall  some basic  materials of various Lorentz type spaces and present embedding relation among these spaces. The generalized Young inequality and  Bernstein inequalities for Lorentz spaces are also established in this section.
Section 3 is devoted to the proof of Theorem \ref{the1.1} and Theorem \ref{the1.2}.
Finally, as an application of the above two theorems, we prove Theorem \ref{the1.4} in Section 4, which gives several new criteria for
energy conservation of 3D Navier-Stokes equations in Lorentz spaces.
\section{Notations and  key auxiliary lemmas} \label{section2}
\subsection{Lorentz spaces and generalized Bernstein inequality}
Throughout this paper, we will use the summation convention on repeated indices. $C$ will denote positive absolute constants which may be different from line to line unless otherwise stated in this paper. $a\approx b$ means that $C^{-1}b\leq a\leq Cb$ for some constant $C>1$.
 $\chi_{\Omega}$ stands for the characteristic function of a set $\Omega\subset \mathbb{R}^{n}$. $|E|$ represents the $n$-dimensional Lebesgue measure of a set $E\subset \mathbb{R}^{n}$. Let $\mathcal{M}$ be the Hardy-Littlewood maximal operator and its definition is given by
  $$\mathcal{M}f(x)=\sup_{r>0}\f{1}{|B(r)|}\int_{B(r)}|f(x-y)|dy,
  $$
where $f$ is any locally integrable function on $\mathbb{R}^{n}$, and $B(r)$ is the open ball centered at the origin with radius $r>0$.

  Next, we present some basic facts on Lorentz spaces. Recall that the distribution function of a   measurable function  $f$ on $\Omega$ is the function $f_{\ast}$ defined on $[0,\infty)$ by
$$
f_{\ast}(\alpha)=|\{x\in \Omega:|f(x)|>\alpha\}|.
$$
The decreasing rearrangement of $f$ is the function $f^{\ast}$ defined on $[0,\infty)$ by
$$
f^{\ast}(t)=\inf\{\alpha>0: f_{\ast}(\alpha)\leq t\}.
$$
For $p,q\in (0,\infty]$, we define
$$
\|f\|_{L^{p,q}(\Omega)}=\left\{\ba
&\B(\int_{0}^{\infty}\left(t^{\f{1}{p}}f^{\ast}(t)\right)^{q}\f{dt}{t}\B)^{\f1q}, ~~\text { if } q<\infty, \\
 &\sup_{t>0}t^{\f{1}{p}}f^{\ast}(t), ~~\text { if } q=\infty.
\ea\right.
$$
Furthermore,
$$
L^{p,q}(\Omega)=\big\{f: f~ \text{is a measurable function on}~ \Omega ~\text{and} ~\|f\|_{L^{p,q}(\Omega)}<\infty\big\},
$$
which implies that $L^{\infty,\infty}=L^{\infty}$, $L^{q,q}=L^{q}$ and $L^{\infty,q}=\{0\}$	for $0<q<\infty$.		

Notice that identity definition of  Lorentz norm  can be found in \cite{[Grafakos],[Maly]}. Indeed, for $0<p\leq\infty$ and $0<q\leq\infty$, there holds
$$
\|f\|_{L^{p,q}(\Omega)}= p^{\f{1}{q}}\|\| \alpha \chi_{(\alpha, \infty)}(|f(\cdot)|)\left\|_{L^{p}(\Omega)}\right\|_{L^{q}\left(\mathbb{R}^{+}, \frac{d \alpha}{\alpha}\right)}\\
= \left\{\ba
&\B(p\int_{0}^{\infty}\alpha^{q}f_{*}(\alpha)^{\f{q}{p}}\f{d\alpha}{\alpha}\B)^{\f{1}{q}} , ~~\text { if } q<\infty, \\
 &\sup_{\alpha>0}\alpha f_{*}(\alpha)^{\f{1}{p}} ,~~\text { if } q=\infty.
\ea\right.
$$
Similarly, one can define
Lorentz spaces $L^{p,q}(0,T;X)$ in time for $0<p, q\leq\infty$. $f\in L^{p,   q}(0,T;X)$ means that $\|f\|_{L^{p,q}(0,T;X)}<\infty$, where
$$\|f\|_{L^{p,q}(0,T;X)}=\left\{\ba
&\B(p\int_{0}^{\infty}\alpha^q|\{t\in[0,T)
:\|f(t)\|_{X}>\alpha\}|^{\f{q}{p}}\f{d\alpha}{\alpha}\B)^{\f{1}{q}} , ~~\text { if } q<\infty, \\
 &\sup_{\alpha>0}\alpha|\{t\in[0,T)
:\|f(t)\|_{X}>\alpha\}|^{\f{1}{p}} ,~~\text { if } q=\infty.\ea\right.
$$

Note that the triangle inequality is not valid for $\|\cdot\|_{L^{p, q}(\mathbb{R}^{n})}  .$ Another equivalent norm in Lorentz spaces is defined as\be\label{thirddefin}
\|f\|^{\ast}_{L^{p,q }(\mathbb{R}^{n} )}=\left\{\ba
&\left( \int_{0}^{\infty}\left(t^{\frac{1}{p}} f^{* *}(t)\right)^{q} \f{dt}{t}\right)^{\frac{1}{q}}, ~~\text { if } 1<p<\infty, 1 \leq q<\infty, \\
&\sup _{t>0} t^{\frac{1}{p}}f^{* *}(t), ~~\text { if } 1<p \leq \infty, q=\infty, \ea
 \right.
\ee
where
$$
f^{* *}(t)=\frac{1}{t} \int_{0}^{t} f^{*}(s) d s =\sup _{|E| \geq t}\left(\frac{1}{|E|} \int_{E}|f(x)| d x\right),~ t>0.
$$
 In addition, Lorentz spaces  endowed with the norm $\|\cdot\|^{\ast}_{L^{p, q} }$ are Banach spaces, and there holds
\be\label{thirddefincoro}
\|f\|_{L^{p, q} (\mathbb{R}^{n} )} \leq\|f\|_{L^{p, q} (\mathbb{R}^{n} )}^{*} \leq \frac{p}{p-1}\|f\|_{L^{p, q}(\mathbb{R}^{n} ) }.
\ee
Most of the above statement is borrowed from  \cite{[Grafakos],[BL],[CF]}.

Subsequently, we present norm-equivalence concerning Lorentz spaces.
\begin{lemma}
Let $f$ be in $L^{p,q}(\mathbb{R}^{n})$ with $1<p\leq\infty$ and $1\leq q\leq\infty$. Then there holds
\be\label{inorm}
\| f\|_{L^{p,q}(\mathbb{R}^{n} )}\leq C_{1}\| f\|^{\ast}_{L^{p,q}(\mathbb{R}^{n} )} \leq C_{2} \|\mathcal{M}f\|_{L^{p,q}(\mathbb{R}^{n} )} \leq C_{3}\| f\|^{\ast}_{L^{p,q}(\mathbb{R}^{n} )}\leq C_{4}
\| f\|_{L^{p,q}(\mathbb{R}^{n} )},\ee
where $C_{1},C_{2},C_{3}$ and $C_{4}$ are positive constants depending only on $p,q$ and $n$.
\end{lemma}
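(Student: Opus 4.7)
The plan is to reduce the four-term chain to two standard pointwise comparisons between $f^{**}$ and the rearrangement of $\mathcal{M}f$, and then feed them into \eqref{thirddefin}. The outer inequalities $\|f\|_{L^{p,q}}\leq C_{1}\|f\|^{*}_{L^{p,q}}$ and $\|f\|^{*}_{L^{p,q}}\leq C_{4}\|f\|_{L^{p,q}}$ are already recorded in \eqref{thirddefincoro} with $C_{1}=1$ and $C_{4}=p/(p-1)$, so the work concentrates on the two middle estimates.

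For the bound $\|f\|^{*}_{L^{p,q}}\leq C_{2}\|\mathcal{M}f\|_{L^{p,q}}$, I would first invoke the Lebesgue differentiation theorem to obtain $|f(x)|\leq\mathcal{M}f(x)$ almost everywhere. Monotonicity of the decreasing rearrangement then yields $f^{*}(t)\leq(\mathcal{M}f)^{*}(t)$ for every $t>0$, and averaging over $[0,t]$ produces the pointwise comparison $f^{**}(t)\leq(\mathcal{M}f)^{**}(t)$. Substituting this into the definition \eqref{thirddefin} gives $\|f\|^{*}_{L^{p,q}}\leq\|\mathcal{M}f\|^{*}_{L^{p,q}}$, and a second application of \eqref{thirddefincoro} (to $\mathcal{M}f$) closes this step with $C_{2}=p/(p-1)$.

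The substantive bound is $\|\mathcal{M}f\|_{L^{p,q}}\leq C_{3}\|f\|^{*}_{L^{p,q}}$, which rests on the Herz-type pointwise inequality
\[
(\mathcal{M}f)^{*}(t)\leq C_{n}\,f^{**}(t),\qquad t>0.
\]
To establish it, I would fix $t>0$, set $\lambda=f^{**}(t)$, and decompose $f=f_{1}+f_{2}$ with $f_{1}=f\chi_{\{|f|>\lambda\}}$. Since $\|f_{2}\|_{L^{\infty}}\leq\lambda$ gives $\mathcal{M}f_{2}\leq\lambda$, one has the set inclusion $\{\mathcal{M}f>(C_{w}+1)\lambda\}\subseteq\{\mathcal{M}f_{1}>C_{w}\lambda\}$, where $C_{w}=C_{w}(n)$ denotes the weak-$(1,1)$ constant of $\mathcal{M}$. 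The monotonicity $f^{*}(t)\leq f^{**}(t)=\lambda$ forces $|\{|f|>\lambda\}|\leq t$, so the layer-cake formula delivers $\|f_{1}\|_{L^{1}}\leq\int_{0}^{t}f^{*}(s)\,ds=t\lambda$, and the weak-$(1,1)$ bound yields $|\{\mathcal{M}f_{1}>C_{w}\lambda\}|\leq t$. Hence $(\mathcal{M}f)^{*}(t)\leq(C_{w}+1)f^{**}(t)$, and integrating this against the weight $t^{q/p}\,dt/t$ (or taking the supremum when $q=\infty$) turns the pointwise comparison into the desired norm inequality with $C_{3}=C_{w}+1$. The main technical ingredient I would appeal to is the classical weak-$(1,1)$ estimate for $\mathcal{M}$; the rest is bookkeeping, with the endpoint $p=\infty$ handled identically after replacing the integral by a supremum.
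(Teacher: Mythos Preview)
Your proof is correct and follows essentially the same route as the paper: the outer inequalities come from \eqref{thirddefincoro}, and the two middle comparisons rest on the classical Herz-type equivalence $(\mathcal{M}f)^{*}(t)\approx f^{**}(t)$. The only differences are cosmetic: the paper simply cites the two-sided pointwise inequality $c_{1}(\mathcal{M}f)^{*}(t)\leq f^{**}(t)\leq c_{2}(\mathcal{M}f)^{*}(t)$ from standard references (Duoandikoetxea, Bennett--Sharpley) and integrates, whereas you prove the bound $(\mathcal{M}f)^{*}\leq C_{n}f^{**}$ from scratch via the weak-$(1,1)$ estimate, and for the reverse direction you bypass the harder inequality $f^{**}\leq c_{2}(\mathcal{M}f)^{*}$ by using the elementary $|f|\leq\mathcal{M}f$ together with a second appeal to \eqref{thirddefincoro}.
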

\begin{proof}
Since $f\in L^{p,q}(\mathbb{R}^{n})$ with $1<p\leq\infty$ and $1\leq q\leq\infty$, it follows from \eqref{Inclusion} that $f$ is a locally integrable function on $\mathbb{R}^{n}$. Recall the pointwise inequality involving Hardy-littlewood maximal operator (see \cite[p.41]{[Duoandikoetxea]} and \cite[Chapter 3]{[BS]}) below,
$$
c_{1}(\mathcal{M}f)^{\ast}(t)\leq f^{\ast\ast}(t)\leq c_{2} (\mathcal{M}f)^{\ast}(t),~t>0,
$$
where $c_{1}$ and $c_{2}$ are positive constants depending only on $n$. This together with \eqref{thirddefin} means
$$
  \| f\|^{\ast}_{L^{p,q}(\mathbb{R}^{n} )} \leq C  \|\mathcal{M}f\|_{L^{p,q}(\mathbb{R}^{n} )} \leq C \| f\|^{\ast}_{L^{p,q}(\mathbb{R}^{n} )}.
$$
The conclusion is a straightforward consequence of the latter and \eqref{thirddefincoro}.
\end{proof}
\begin{remark} Even if $0<q<1$, the equivalent relation $\|\mathcal{M}f\|_{L^{p,q}(\mathbb{R}^{n})}\thickapprox \|f\|_{L^{p,q}(\mathbb{R}^{n})}$ still holds for all functions $f\in L^{p,q}(\mathbb{R}^{n})$ with $1<p\leq\infty$. Indeed, thanks to Marcinkiewicz's interpolation theorem for Lorentz spaces \cite[Theorem 1.4.19]{[Grafakos]}, it follows from the fact that Hardy-littlewood maximal operator $\mathcal{M}$ is a sublinear operator of both weak type $(1,1)$ and strong type $(\infty,\infty)$ that $\mathcal{M}$ is also bounded on $L^{p,q}(\mathbb{R}^{n})$ for any $p\in(1,\infty]$ and $q\in(0,\infty]$, which yields that $\|\mathcal{M}f\|_{L^{p,q}(\mathbb{R}^{n})}\leq C(n,p,q)\|f\|_{L^{p,q}(\mathbb{R}^{n})}$ for all functions $f\in L^{p,q}(\mathbb{R}^{n})$. On the other hand, Lebesgue's differentiation theorem implies that $|f(x)|\leq \mathcal{M}f(x)$ for almost all $x\in \mathbb{R}^{n}$, which yields that $f_{\ast}\leq (\mathcal{M}f)_{\ast}$ and $\|f\|_{L^{p,q}(\mathbb{R}^{n})}\leq \|\mathcal{M}f\|_{L^{p,q}(\mathbb{R}^{n})}$ for $1<p\leq\infty$ and $0<q\leq\infty$.
\end{remark}

We list the properties of Lorentz spaces as follows.
\begin{itemize}

\item Interpolation characteristic of Lorentz spaces \cite{[BL],[DDN]}
\be\label{Interpolation characteristic}
\ba &\|f \|_{L^{p,p_{1}}(\mathbb{R}^{n})} \leq  \left[\frac{(r-q)p^{2}}{(r-p)(p-q)p_{1}}\right]^{\frac{1}{p_{1}}}\|f \|_{L^{q,\infty}(\mathbb{R}^{n})}^{\alpha} \|f \|_{L^{r ,\infty}(\mathbb{R}^{n})}^{1-\alpha},\\
&\text{with}~~ \f{1}{p}=\f{\alpha}{q}
+\f{1-\alpha}{r},~0<\alpha<1,~0<q<p<r\leq \infty ~~ \text{and}~~ 0<p_{1}\leq \infty.\ea\ee

\item

$$\||f|^{\lambda} \|_{L^{p,q}(\mathbb{R}^{n})}= \|f \|^{\lambda}_{L^{\lambda p,\lambda q}(\mathbb{R}^{n})},~~\text{with}~~
0< \lambda<\infty~~\text{and}~~0<p,q\leq\infty.$$

\item
H\"older's inequality in Lorentz spaces  \cite{[Neil]}
\be\label{HolderIQ}
\ba &\|fg\|_{L^{r,s}(\Omega)}\leq C(r_{1},r_{2},s_{1},s_{2})\,\|f\|_{L^{r_{1},s_{1}}(\Omega)}\|g\|_{L^{r_{2},s_{2}}(\Omega)},
\\
&\text{with}~~\f{1}{r}=\f{1}{r_{1}}+\f{1}{r_{2}},~~\f{1}{s}=\f{1}{s_{1}}+\f{1}{s_{2}},~~0<r_{1},r_{2},s_{1},s_{2}\leq \infty.\ea\ee

\item

The Lorentz spaces increase as the exponent $q$ increases \cite{[Grafakos],[Maly]}

For $0< p\leq\infty$ and $0< q_{1}<q_{2}\leq\infty,$
\be\label{lincreases}
\|f\|_{L^{p,q_{2}}(\Omega)}\leq \B(\f{q_{1}}{p}\B)^{\f{1}{q_{1}}-\f{1}{q_{2}}}\|f\|_{L^{p,q_{1}}(\Omega)}.
\ee

\item
Inclusion in Lorentz spaces on bounded domains  \cite{[Grafakos],[Maly]}

For any $1\leq m<M\leq\infty$, $~~1\leq r,q\leq\infty$,
\be\label{Inclusion}
\|f\|_{L^{m,r}(\Omega)}\leq \B(\f{1}{m}\B)^{\f{r-1}{r}}
\B(\f{q}{M}\B)^{\f{1}{q}}\f{|\Omega|
^{\f{1}{m}-\f{1}{M}}}{\f{1}{m}-\f{1}{M}}
\|f\|_{L^{M,q}(\Omega)}.
\ee

\item Sobolev inequality in Lorentz spaces \cite{[Neil],[Tartar]}
\be\label{sl}
\|f\|_{L^{\f{np}{n-p},p}(\mathbb{R}^{n})}\leq C(n,p)\,\|\nabla f\|_{L^{p}(\mathbb{R}^{ n })}~~\text{with}~~1\leq  p <n.\ee
\item Young inequality in Lorentz spaces \cite{[Neil]}

Let $1<p,q,r<\infty$, $0<s_{1},s_{2}\leq\infty$
,$\f{1}{p}+\f{1}{q}=\f{1}{r}+1$, and $ \f{1}{s}=\f{1}{s_{1}}+\f{1}{s_{2}}$. Then there holds
\be\label{young}
\|f\ast g\|_{L^{r,s}(\mathbb{R}^{n})}\leq C(p,q,s_{1},s_{2})\,\|f \|_{L^{p,s_{1}}(\mathbb{R}^{n})}\|g \|_{L^{q,s_{2}}(\mathbb{R}^{n})}.
\ee
\end{itemize}

It should be mentioned that the classical Young inequality \eqref{young} due to O'Neil requires the first index of every Lorentz norm is larger than 1. Grafakos \cite{[Grafakos]} improved O'Neil's result and showed that
$$
\|f\ast g\|_{L^{q,\infty}(\mathbb{R}^{n})}\leq C \|f\|_{L^{r,\infty}(\mathbb{R}^{n})}\|g\|_{L^{p}(\mathbb{R}^{n})},~~\text{with}~~
\f1q+1=\f1p+\f1r,\, 1\leq p<\infty~~\text{and}~~1<q,r<\infty.
$$
The following lemma extends it to a more general version, which we shall give a different proof from that of \cite[Theorem 1.2.13]{[Grafakos]}.
\begin{lemma}\label{YoungGen}
Suppose that $0<l\leq s \leq\infty$, $1 \leq r<\infty$ and $1<p, q<\infty$. If $f \in L^{q, l}(\mathbb{R}^{n})$ and $g \in L^{r}(\mathbb{R}^{n})$ with
\be\label{YoungGen 1}
\frac{1}{p}+1=\frac{1}{q}+\frac{1}{r},
\ee
then $f \ast g \in L^{p, s}(\mathbb{R}^{n})$, and there exists a positive constant $C$ depending only on $r,q,s$ and $l$ such that
\be\label{YoungGen 2}
\|f \ast g\|_{L^{p, s}(\mathbb{R}^{n} )} \leq C\|f\|_{L^{q, l}(\mathbb{R}^{n} )}\|g\|_{L^{r}(\mathbb{R}^{n} )}.
\ee
\end{lemma}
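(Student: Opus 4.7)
My plan is to deduce Lemma \ref{YoungGen} from the classical Young inequality by real (Marcinkiewicz) interpolation in the first argument, followed by the embedding \eqref{lincreases} to handle the second index.

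The starting point is to fix $g\in L^{r}(\mathbb{R}^{n})$ and view $T_g:f\mapsto f\ast g$ as a linear operator. The classical Young inequality, valid for every $1\le q_{*},p_{*},r\le\infty$ with $\tfrac{1}{p_{*}}+1=\tfrac{1}{q_{*}}+\tfrac{1}{r}$, shows that
\[
\|T_g f\|_{L^{p_{*}}(\mathbb{R}^{n})}\le \|g\|_{L^{r}(\mathbb{R}^{n})}\,\|f\|_{L^{q_{*}}(\mathbb{R}^{n})}.
\]
I will choose two admissible pairs $(q_{0},p_{0})$ and $(q_{1},p_{1})$ with $q_{0}<q<q_{1}$ close enough to $q$ so that both exponents lie in the open interval $(1,r')$ when $r>1$ (where $r'=r/(r-1)$), or in $(1,\infty)$ when $r=1$ (in which case $p=q$). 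The hypothesis $1<p,q<\infty$ together with $\tfrac{1}{p}+1=\tfrac{1}{q}+\tfrac{1}{r}$ places $q$ strictly inside the admissible interval, so an open neighborhood is available and $p_{0}\ne p_{1}$ follows from $q_{0}\ne q_{1}$ by the Young scaling relation.

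With these endpoints, the standard real-interpolation identifications
\[
(L^{q_{0}}(\mathbb{R}^{n}),L^{q_{1}}(\mathbb{R}^{n}))_{\theta,l}=L^{q,l}(\mathbb{R}^{n}),\qquad (L^{p_{0}}(\mathbb{R}^{n}),L^{p_{1}}(\mathbb{R}^{n}))_{\theta,l}=L^{p,l}(\mathbb{R}^{n}),
\]
where $\tfrac{1}{q}=\tfrac{1-\theta}{q_{0}}+\tfrac{\theta}{q_{1}}$ (equivalently $\tfrac{1}{p}=\tfrac{1-\theta}{p_{0}}+\tfrac{\theta}{p_{1}}$), combined with the Aronszajn--Gagliardo interpolation theorem applied to $T_g$ with operator norms bounded by $\|g\|_{L^{r}}$ at both endpoints, produce
\[
\|T_g f\|_{L^{p,l}(\mathbb{R}^{n})}\le C\,\|g\|_{L^{r}(\mathbb{R}^{n})}\,\|f\|_{L^{q,l}(\mathbb{R}^{n})},
\]
with $C$ depending only on $q,r,l$ (through the interpolation constants).

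To conclude, I invoke the monotonicity of Lorentz spaces in the second index, \eqref{lincreases}: since $0<l\le s\le\infty$,
\[
\|f\ast g\|_{L^{p,s}(\mathbb{R}^{n})}\le C(p,l,s)\,\|f\ast g\|_{L^{p,l}(\mathbb{R}^{n})}\le C\,\|f\|_{L^{q,l}(\mathbb{R}^{n})}\,\|g\|_{L^{r}(\mathbb{R}^{n})},
\]
which is \eqref{YoungGen 2}. The only point requiring care is the bookkeeping of admissible endpoint exponents, particularly at the boundary case $r=1$ (forcing $p=q$, so the interpolation degenerates to $L^{q_{i}}\to L^{q_{i}}$) and in the regime where $q$ approaches $r'$; both situations are handled by the strict inequalities $1<q<\infty$ and $1\le r<\infty$ built into the hypothesis.
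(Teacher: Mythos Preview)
Your argument is correct and follows essentially the same plan as the paper's proof: reduce to $s=l$ via the embedding \eqref{lincreases}, fix $g\in L^{r}$, regard $f\mapsto f\ast g$ as a linear operator, and interpolate the classical Young inequality. The only difference is in the interpolation machinery and choice of endpoints: the paper invokes the Marcinkiewicz-type theorem of Lemma~\ref{gw} with the extreme restricted weak-type endpoints $L^{1}\to L^{r,\infty}$ and $L^{r'}\to L^{\infty}$ (verified on characteristic functions), whereas you use the $K$-method identification $(L^{q_{0}},L^{q_{1}})_{\theta,l}=L^{q,l}$ together with two nearby strong-type Lebesgue endpoints $q_{0}<q<q_{1}$. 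Both routes are standard and equally short here; your version has the advantage of avoiding the restricted weak-type formulation, at the cost of invoking the Bergh--L\"ofstr\"om real-interpolation identification explicitly.
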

\begin{remark}
In general, \eqref{YoungGen 2} fails when $l>s$. Indeed, here is a counterexample as follows: for any $f \in L^{q, l}(\mathbb{R}^{n})$ with $0<s<l\leq\infty$ and $1<q<\infty$, it follows from \eqref{Inclusion} and \eqref{lincreases} that $f$ is a locally integrable function on $\mathbb{R}^{n}$. Take $g=\chi_{Q}$, where $Q=\left\{(y_{1},y_{2},\ldots,y_{n})\in \mathbb{R}^{n}: \max_{i}|y_{i}|\leq 1/2 \right\}$ is a cube in $\mathbb{R}^{n}$. Let $(\chi_{Q})_{r}(x)=r^{-n}\chi_{Q}(x/r)$ for all $x\in \mathbb{R}^{n}$ and $r>0$. Then Lebesgue's differentiation theorem guarantees that $\lim\limits_{m\to\infty} f \ast (\chi_{Q})_{\frac{1}{m}}(x)=f(x)$ for almost all $x\in \mathbb{R}^{n}$, which together with \cite[Proposition 1.4.5]{[Grafakos]} implies that $f^{\ast}\leq \liminf\limits_{m\to\infty} \left(f \ast (\chi_{Q})_{\frac{1}{m}}\right)^{\ast}$. Hence we may apply Fatou's lemma and \eqref{YoungGen 2} with $r=1$ to derive that for $0<s<l\leq\infty$ and $1<q<\infty$,
$$ \|f \|_{L^{q, s}(\mathbb{R}^{n} )} \leq \liminf\limits_{m\to\infty}\|f \ast (\chi_{Q})_{\frac{1}{m}}\|_{L^{q, s}(\mathbb{R}^{n} )} \leq C\|f\|_{L^{q, l}(\mathbb{R}^{n} )}\|\chi_{Q}\|_{L^{1}(\mathbb{R}^{n} )}=C\|f\|_{L^{q, l}(\mathbb{R}^{n} )}.$$
This contradicts the fact that $L^{q, s}(\mathbb{R}^{n})\subsetneqq L^{q, l}(\mathbb{R}^{n})$. Additionally, we remark that necessity of the condition \eqref{YoungGen 1} results from dilation structure of the convolution $f \ast g$ in \eqref{YoungGen 2}, and the exclusion of endpoint cases for the three indices $q,\,r$ and $p$ is due to \cite[Example 1.2.14]{[Grafakos]}.
\end{remark}

The proof of this lemma relies on Marcinkiewicz's interpolation theorem for Lorentz spaces as follows, which is given in \cite[Theorem 1.4.19]{[Grafakos]}.
\begin{lemma}\label{gw}
Let $0<r \leq \infty, 0<p_{0} \neq p_{1} \leq \infty,$ and $0<q_{0} \neq q_{1} \leq \infty$ and let $(X, \mu)$ and $(Y, v)$ be two measure spaces. Let $T$ be either a quasilinear operator with some constant $K>0$ defined on $L^{p_{0}}(X)+L^{p_{1}}(X)$ and taking values in the set of measurable functions on $Y$ or a linear operator defined on the set of simple functions on $X$ and taking values as before. Assume that for some $M_{0}, M_{1}<\infty$ the following (restricted) weak type estimates hold:
$$
\begin{array}{l}
\left\|T\left(\chi_{A}\right)\right\|_{L^{q_{0}, \infty}} \leq M_{0} \mu(A)^{1 / p_{0}}, \\
\left\|T\left(\chi_{A}\right)\right\|_{L^{q_{1}, \infty}} \leq M_{1} \mu(A)^{1 / p_{1}},
\end{array}
$$
for all measurable subsets $A$ of $X$ with $\mu(A)<\infty .$ Fix $0<\theta<1$ and let
$$
\frac{1}{p}=\frac{1-\theta}{p_{0}}+\frac{\theta}{p_{1}} \quad \text { and } \quad \frac{1}{q}=\frac{1-\theta}{q_{0}}+\frac{\theta}{q_{1}}.
$$
Then there exists a positive constant $C,$ which depends only on $K, p_{0}, p_{1}, q_{0}, q_{1}, r$ and $\theta,$ such that for all functions $f$ in the domain of $T$ and in $L^{p, r}(X)$ we have
$$
\|T(f)\|_{L^{q, r}} \leq C(M_{0}+M_{1})\|f\|_{L^{p, r}}.
$$
\end{lemma}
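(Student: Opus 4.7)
The plan is to follow the classical Marcinkiewicz--Calder\'on interpolation scheme, adapted to the Lorentz scale. First, since any $f\in L^{p,r}(X)$ admits a decomposition into $L^{p_0}(X)+L^{p_1}(X)$ by truncation at an appropriate level, $Tf$ is well-defined. By density of simple functions in $L^{p,r}(X)$ when $r<\infty$ (the case $r=\infty$ follows by monotone approximation), it suffices to prove the bound for simple $f$. Without loss of generality, assume $p_0<p_1$, so that $p_0<p<p_1$ by the interpolation formula; the ordering of $q_0,q_1$ is forced similarly.

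The core step is a level-set estimate. Fix $\alpha>0$, pick a threshold $s=s(\alpha)>0$ to be chosen, and split $f=g_\alpha+h_\alpha$ with $g_\alpha=f\chi_{\{|f|>s\}}$ and $h_\alpha=f\chi_{\{|f|\le s\}}$. Quasi-linearity gives $\{|Tf|>2K\alpha\}\subset\{|Tg_\alpha|>\alpha\}\cup\{|Th_\alpha|>\alpha\}$. Decomposing each piece dyadically by $f_j=f\chi_{\{2^{j-1}<|f|\le 2^j\}}$, we have $|f_j|\le 2^j\chi_{E_j}$ with $E_j=\{|f|>2^{j-1}\}$. Applying the restricted weak-type hypothesis to $\chi_{E_j}$, together with the homogeneity $T(cf)=cTf$ and quasi-subadditivity telescoped over dyadic scales, yields
\[
\|Tg_\alpha\|_{L^{q_0,\infty}}\lesssim M_0\sum_{2^j>s}2^j\mu(E_j)^{1/p_0}, \qquad \|Th_\alpha\|_{L^{q_1,\infty}}\lesssim M_1\sum_{2^j\le s}2^j\mu(E_j)^{1/p_1},
\]
and both series converge with constants depending only on $p,p_0,p_1,K$ precisely because $p_0<p<p_1$. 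Choosing $s(\alpha)$ so that the two bounds balance produces a single estimate on $|\{|Tf|>2K\alpha\}|^{1/q}$ in terms of $\mu(\{|f|>s(\alpha)\})$ and $s(\alpha)$.

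Finally, integrate via the rearrangement equivalence
\[
\|Tf\|_{L^{q,r}}^{r}\asymp\int_{0}^{\infty}\alpha^{r}|\{|Tf|>\alpha\}|^{r/q}\,\frac{d\alpha}{\alpha}.
\]
The change of variables $\alpha\leftrightarrow s$, Hardy's inequality to absorb the dyadic sums, and the parallel identity $\|f\|_{L^{p,r}}^{r}\asymp\int_{0}^{\infty}s^{r}\mu(\{|f|>s\})^{r/p}\frac{ds}{s}$ together deliver the desired estimate $\|Tf\|_{L^{q,r}}\lesssim(M_0+M_1)\|f\|_{L^{p,r}}$.

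The principal obstacle is extracting usable bounds on $Tg_\alpha,Th_\alpha$ from the restricted weak-type hypotheses, which control $T$ only on characteristic functions. With a linear $T$ one could integrate through the layer-cake representation of $g_\alpha$, but quasi-linearity forbids this; the dyadic decomposition into blocks $|f_j|\le 2^j\chi_{E_j}$ is what makes the argument run, as the iterated quasi-subadditivity constants across dyadic scales sum to a finite geometric series precisely because $p_0<p<p_1$. Balancing the two pieces via the optimal $s(\alpha)$ dictated by the interpolation identities defining $p$ and $q$ is the remaining technical step, and it fixes the right scaling between $\alpha$ and the rearrangement of $f$.
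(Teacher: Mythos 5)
You should know at the outset that the paper does not prove this statement at all: Lemma \ref{gw} is quoted verbatim from Grafakos (Theorem 1.4.19 of \emph{Classical Fourier Analysis}) and is used as a black box to prove the generalized Young inequality of Lemma \ref{YoungGen}. So there is no in-paper argument to compare with, and your proposal has to stand on its own as a proof of this Stein--Weiss/Marcinkiewicz restricted weak type interpolation theorem.

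Judged on its own, your outline reproduces the classical Marcinkiewicz splitting, but it has a genuine gap at exactly the point that distinguishes this theorem from the ordinary Marcinkiewicz theorem: the hypotheses control $T$ only on characteristic functions, yet your key display applies them to the dyadic blocks $f_j=f\chi_{\{2^{j-1}<|f|\le 2^j\}}$, which are not characteristic functions. Quasilinearity gives homogeneity and quasi-subadditivity but no monotonicity, so $|f_j|\le 2^j\chi_{E_j}$ does not yield $\|Tf_j\|_{L^{q_0,\infty}}\le 2^j M_0\,\mu(E_j)^{1/p_0}$; and in the linear case $T$ is only defined on simple functions, where the needed step is the nontrivial Abel-summation (layer-cake) argument showing that restricted weak type $(p_i,q_i)$ upgrades to $\|Tf\|_{L^{q_i,\infty}}\le C M_i\|f\|_{L^{p_i,1}}$. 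A second gap is the summation of countably many blocks inside $\|\cdot\|_{L^{q_i,\infty}}$: this is only a quasi-norm (not normable for $q_i\le 1$), and iterating $|T(f+g)|\le K(|Tf|+|Tg|)$ attaches a factor of order $K^{j}$ to the $j$-th piece. Your claim that these factors are absorbed ``precisely because $p_0<p<p_1$'' is not correct as stated: the geometric decay rate of $2^j\mu(E_j)^{1/p_i}$ is fixed by the exponents, while $K$ is arbitrary, so $K^{j}$ can defeat it; handling this requires an extra device (grouping the dyadic scales into blocks of length depending on $K$, or working with an equivalent $\rho$-subadditive quasinorm), which is precisely the technical content of the proof in Grafakos. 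Two smaller points: the opening reduction to simple $f$ ``by density'' is unavailable for a quasilinear $T$, since no continuity is assumed; and the ordering of $q_0,q_1$ is not forced by $p_0<p_1$ --- the theorem is genuinely off-diagonal. The final skeleton (split at $s(\alpha)$, balance, integrate, Hardy) is fine once weak-type bounds for $g_\alpha,h_\alpha$ are secured, but as written the crucial upgrade from restricted to unrestricted weak type is assumed rather than proved.
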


Now we continue with the proof of Lemma \ref{YoungGen}.
\begin{proof}[Proof of Lemma \ref{YoungGen}]
Since \eqref{lincreases} implies $L^{p, l}(\mathbb{R}^{n}) \hookrightarrow L^{p, s}(\mathbb{R}^{n})$, it suffices to prove \eqref{YoungGen 2} for the case when $s=l$.

To this end, fix $g\in L^{r}(\mathbb{R}^{n})$. Let $T(f)=f\ast g$, then $T$ is a linear operator defined on the set of simple functions on $\mathbb{R}^{n}$. For all measurable subsets $A$ of $\mathbb{R}^{n}$ with $|A|<\infty$, it follows from \eqref{lincreases} and Young's inequality for Lebesgue spaces that
$$
\begin{array}{l}
\left\|T\left(\chi_{A}\right)\right\|_{L^{r, \infty}(\mathbb{R}^{n} )}\leq\left\|T\left(\chi_{A}\right)\right\|_{L^{r}(\mathbb{R}^{n} )} \leq  \|g\|_{L^{r}(\mathbb{R}^{n})}\|\chi_{A}\|_{L^{1}(\mathbb{R}^{n} )}= \|g\|_{L^{r}(\mathbb{R}^{n})}|A|, \\
\left\|T\left(\chi_{A}\right)\right\|_{L^{\infty, \infty}(\mathbb{R}^{n} )}=\left\|T\left(\chi_{A}\right)\right\|_{L^{\infty}(\mathbb{R}^{n} )} \leq  \|g\|_{L^{r}(\mathbb{R}^{n})}\|\chi_{A}\|_{L^{r'}(\mathbb{R}^{n} )} = \|g\|_{L^{r}(\mathbb{R}^{n})} |A|^{1 /r'},
\end{array}
$$
where $1<r'\leq\infty$ and $1 /r + 1 /r'=1.$

Take $\theta=1-r/p=r(1-1/q)\in (0,1)$. Then the hypotheses on the indices imply that
$$
\frac{1}{q}=\frac{1-\theta}{1}+\frac{\theta}{r'} \quad \text { and } \quad \frac{1}{p}=\frac{1-\theta}{r}+\frac{\theta}{\infty}.
$$
With the help of Lemma \ref{gw}, we obtain that for all functions $f\in L^{q,l}(\mathbb{R}^{n})$ with $l\in (0,\infty]$ there holds
$$\|f\ast g\|_{L^{p,l}(\mathbb{R}^{n} )}=\|T(f)\|_{L^{p,l}(\mathbb{R}^{n} )}\leq C\|g\|_{L^{r}(\mathbb{R}^{n})}\|f\|_{L^{q,l}(\mathbb{R}^{n})}.
$$
Here $C>0$ depends only on $r,q$ and $l$. This completes the proof.
\end{proof}

As an application of this lemma, we may derive the generalized Bernstein inequality for Lorentz spaces as follows.
\begin{lemma}\label{lem2.4}
Let a ball $B=\left\{\xi \in \mathbb{R}^{n}: |\xi| \leq R\right\}$ with $0<R<\infty$ and an annulus $\mathcal{C}=\left\{\xi \in \mathbb{R}^{n}: r_{1} \leq|\xi| \leq r_{2}\right\}$ with $0<r_{1}<r_{2}<\infty$. Then a positive constant $C$ exists such that for any nonnegative integer $k,$ any couple $(p, q)$ with $1< p <q<\infty,$ any $\lambda\in (0, \infty)$, and any function $u$ in $L^{p,\infty}(\mathbb{R}^{n})$ or in $L^{q,l}(\mathbb{R}^{n})$ with $0< l\leq\infty$, there hold
\begin{align}
&  \sup _{|\alpha|=k}\left\|\partial^{\alpha} u\right\|_{L^{\infty}(\mathbb{R}^{n})} \leq C \lambda^{k+\frac{n}{p}}\|u\|_{L^{p,\infty}(\mathbb{R}^{n})} ~~ \text{with}~~ \operatorname{supp} \widehat{u} \subset \lambda B ~~ \text{and}~~1< p \leq \infty\,;\label{bern1}\\
&  \sup _{|\alpha|=k}\left\|\partial^{\alpha} u\right\|_{L^{q,1}(\mathbb{R}^{n})} \leq C \lambda^{k+n\left(\frac{1}{p}-\frac{1}{q}\right)}\|u\|_{L^{p,\infty}(\mathbb{R}^{n})} ~~ \text{with}~~ \operatorname{supp} \widehat{u} \subset \lambda B\,;\label{bern2}\\
&  \sup _{|\alpha|=k}\left\|\partial^{\alpha} u\right\|_{L^{q,l}(\mathbb{R}^{n})} \leq C \lambda^{k}\|u\|_{L^{q,l}(\mathbb{R}^{n})} ~~ \text{with}~~ \operatorname{supp} \widehat{u} \subset \lambda B\,;\label{bern3}\\
&  C^{-1} \lambda^{k}\|u\|_{L^{q,l}(\mathbb{R}^{n})} \leq \sup _{|\alpha|=k}\left\|\partial^{\alpha} u\right\|_{L^{q,l}(\mathbb{R}^{n})} \leq C \lambda^{k}\|u\|_{L^{q,l}(\mathbb{R}^{n})} ~~ \text{with}~~ \operatorname{supp} \widehat{u} \subset \lambda \mathcal{C}.\label{bern4}
\end{align}
Here $\lambda B=\left\{\xi \in \mathbb{R}^{n}: |\xi| \leq \lambda R\right\}$ and $\lambda \mathcal{C}=\left\{\xi \in \mathbb{R}^{n}: \lambda r_{1} \leq|\xi| \leq \lambda r_{2}\right\}$.
\end{lemma}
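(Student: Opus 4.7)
The plan is to reduce all four estimates to a convolution against a rescaled Schwartz kernel. Fix a bump $\phi \in C_c^\infty(\mathbb{R}^n)$ with $\phi \equiv 1$ on $B$, and for each multi-index $\alpha$ with $|\alpha|=k$ set
$$h^\alpha(x) := \mathcal{F}^{-1}\bigl[(i\xi)^\alpha \phi(\xi)\bigr](x), \qquad h^\alpha_\lambda(x) := \lambda^{n+k}h^\alpha(\lambda x),$$
so that whenever $\operatorname{supp}\widehat u \subset \lambda B$ one has the representation $\partial^\alpha u = h^\alpha_\lambda * u$. Each $h^\alpha$ is Schwartz and therefore lies in every Lorentz space, and the dilation identity $(f(\lambda\cdot))^\ast(t) = f^\ast(\lambda^n t)$ yields
$$\|h^\alpha_\lambda\|_{L^{a,b}(\mathbb{R}^n)} = \lambda^{\,k+n/a'}\|h^\alpha\|_{L^{a,b}(\mathbb{R}^n)}, \qquad \tfrac{1}{a}+\tfrac{1}{a'}=1,$$
for every admissible $(a,b)$. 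This single scaling computation will produce every $\lambda$-power appearing in \eqref{bern1}--\eqref{bern4}.

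For \eqref{bern1} I will apply H\"older in Lorentz \eqref{HolderIQ} with index pair $((p,\infty),(p',1))$ pointwise to the integrand of $h^\alpha_\lambda * u$, giving $|h^\alpha_\lambda * u(x)| \leq C\|h^\alpha_\lambda\|_{L^{p',1}}\|u\|_{L^{p,\infty}}$ and hence a factor $\lambda^{k+n/p}$ after scaling; the endpoint $p=\infty$ reduces to classical Young. For \eqref{bern2} the paper's Young inequality \eqref{young} is invoked with $(p_1,q_1)=(p,\infty)$, $p_2=(1+1/q-1/p)^{-1}$, $q_2=1$: the strict inequality $1<p<q<\infty$ guarantees $1<p_2<\infty$, the output second index is $1/\infty+1/1=1$, and scaling produces $\lambda^{k+n(1/p-1/q)}$. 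For \eqref{bern3} I will apply the generalized Young inequality just established in Lemma \ref{YoungGen}, taking the Lebesgue factor to be $h^\alpha_\lambda\in L^1$ so that $p=q$ and $s=l$; since $1<q<\infty$, this yields $\|h^\alpha_\lambda*u\|_{L^{q,l}}\leq C\|h^\alpha_\lambda\|_{L^1}\|u\|_{L^{q,l}}=C\lambda^k\|h^\alpha\|_{L^1}\|u\|_{L^{q,l}}$.

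For \eqref{bern4} the upper bound is an immediate consequence of \eqref{bern3} after enlarging $\mathcal{C}$ to a ball. The reverse inequality is obtained by inverting differentiation on the annulus via the multinomial identity $|\xi|^{2k} = \sum_{|\alpha|=k}\binom{k}{\alpha}\xi^{2\alpha}$. Picking $\widetilde\phi \in C_c^\infty(\mathbb{R}^n\setminus\{0\})$ with $\widetilde\phi \equiv 1$ on $\mathcal{C}$ and using $\xi^\alpha \widehat u = i^{-k}\widehat{\partial^\alpha u}$ on $\operatorname{supp}\widehat u \subset \lambda\mathcal{C}$, this identity rearranges to
$$u = i^{-k}\lambda^{-k}\sum_{|\alpha|=k}\binom{k}{\alpha}\bigl[\lambda^n G_\alpha(\lambda\cdot)\bigr]\ast \partial^\alpha u, \qquad G_\alpha := \mathcal{F}^{-1}\!\left[\frac{\widetilde\phi(\xi)\xi^\alpha}{|\xi|^{2k}}\right]\in \mathcal{S}(\mathbb{R}^n).$$
Each rescaled kernel has $L^1$-norm equal to $\|G_\alpha\|_{L^1}$, so applying Lemma \ref{YoungGen} with $r=1$ term by term gives $\lambda^k\|u\|_{L^{q,l}} \leq C\sup_{|\alpha|=k}\|\partial^\alpha u\|_{L^{q,l}}$, as desired.

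The main obstacle is essentially bookkeeping: one must verify in each invocation of \eqref{HolderIQ}, \eqref{young}, and Lemma \ref{YoungGen} that the Lorentz indices fall inside the admissible range. The two delicate points are the check $1<p_2<\infty$ in \eqref{bern2}, which genuinely uses the strict gap $p<q$, and the fact that the second Lorentz index of the kernel can always be chosen as $1$ so as to absorb the $\infty$ second index carried by $u$. Once these admissibility conditions are confirmed, the single scaling identity displayed at the outset produces every exponent claimed in the statement automatically.
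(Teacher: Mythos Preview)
Your proposal is correct and follows essentially the same approach as the paper: both reduce $\partial^\alpha u$ to convolution with a rescaled Schwartz kernel, then handle \eqref{bern1} by H\"older in Lorentz spaces, \eqref{bern2} by O'Neil's Young inequality \eqref{young} with the auxiliary exponent $(1+1/q-1/p)^{-1}$, \eqref{bern3} by Lemma~\ref{YoungGen} with the $L^1$ kernel, and the reverse bound in \eqref{bern4} by the multinomial inversion $|\xi|^{2k}=\sum_{|\alpha|=k}\binom{k}{\alpha}\xi^{2\alpha}$ combined again with Lemma~\ref{YoungGen}. The only cosmetic differences are that the paper writes the kernel as $\lambda^{k}(\partial^\alpha\psi)_\lambda$ rather than absorbing $\lambda^k$ into $h^\alpha_\lambda$, and that you record the multinomial coefficients explicitly.
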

\begin{remark}
 This lemma  extends the following result due to McCormick-Robinson-Rodrigo in \cite{[MRR]}, for $\operatorname{supp} \widehat{f} \subset \lambda B$,
$$
\|f\|_{L^{p}(\mathbb{R}^{n})} \leq c \lambda^{n(1 / q-1 / p)}\|f\|_{L^{q, \infty}(\mathbb{R}^{n})},~~ 1<q<p\leq\infty.
$$
\end{remark}
\begin{remark}
In this lemma, the necessity of the assumption that $p,q>1$ results from Young inequality \eqref{YoungGen 2} and \eqref{young} for Lorentz spaces.
\end{remark}
\begin{proof} (1)\,\,Let $\psi$ be a Schwartz function on $\mathbb{R}^{n}$ such that $\chi_{B}\leq \hat{\psi}\leq \chi_{2B}$. Since $\widehat{u}(\xi)=\hat{\psi}(\xi/\lambda) \widehat{u}(\xi)$ when $\operatorname{supp} \widehat{u} \subset \lambda B$, we have
$$
\partial^{\alpha} u=i^{|\alpha|}\mathcal{F}^{-1}(\xi^{\alpha}\hat{u})=
i^{|\alpha|}\mathcal{F}^{-1}(\xi^{\alpha}\hat{\psi}(\xi/\lambda) \widehat{u}(\xi))=
\lambda^{|\alpha|}(\partial^{\alpha} \psi)_{\lambda}\ast u.
$$
Here  $(\partial^{\alpha} \psi)_{\lambda}(x)=\lambda^{n}\,\partial^{\alpha} \psi(\lambda x)$ for all $x\in\mathbb{R}^{n}$.

Fix $|\alpha|=k$. From the H\"older inequality \eqref{HolderIQ} for Lorentz spaces, we infer that
$$\ba
\|\partial^{\alpha}u \|_{L^{\infty}(\mathbb{R}^{n})}\leq& \lambda^{k}\sup _{x\in\mathbb{R}^{n}}\int_{\mathbb{R}^{n}} |(\partial^{\alpha} \psi)_{\lambda}(x-y)||u(y)|dy\\
\leq& C\lambda^{k} \sup _{x\in\mathbb{R}^{n}}\|(\partial^{\alpha} \psi)_{\lambda}(x-\cdot)\|_{L^{\f{p}{p-1},1}(\mathbb{R}^{n}) }\|u\|_{L^{p,\infty}(\mathbb{R}^{n})}\\=&C\lambda^{k+\f{n}{p}} \|\partial^{\alpha} \psi  \|_{L^{\f{p}{p-1},1}(\mathbb{R}^{n}) }\|u\|_{L^{p,\infty}(\mathbb{R}^{n})},
\ea$$
where $C=C(p)>0$.
Note that  $\partial^{\alpha} \psi\in \mathcal{S}(\mathbb{R}^{n})$, then for any $\varepsilon>0$, there exists a constant $C=C(\varepsilon,n,\partial^{\alpha} \psi)>0$ such that
$0\leq (\partial^{\alpha} \psi)_{\ast}(s)\leq Cs^{-\varepsilon}$ for all $s>0$. This yields that
$$ \ba
\|\partial^{\alpha} \psi \|_{L^{\f{p}{p-1},1}(\mathbb{R}^{n})} =\f{p}{p-1}\int_{0}^{\infty}(\partial^{\alpha} \psi)_{\ast}^{\f{p-1}{p}}(s)ds \leq C(\int_{0}^{1}s^{-\f12}ds+\int_{1}^{\infty}s^{-2}ds)<\infty,
\ea$$
which implies \eqref{bern1}.

(2)\,\,Take $1/r=1+1/q-1/p$, then the hypotheses on the indices imply that $1<r<q<\infty$. In light of \eqref{young}, we see that there exists a positive constant $C=C(p,q)$ such that
$$\ba
\|\partial^{\alpha}u \|_{L^{q,1}(\mathbb{R}^{n})}=& \lambda^{k}\left\|(\partial^{\alpha} \psi)_{\lambda}\ast u\right\|_{L^{q,1}(\mathbb{R}^{n})}\\
\leq & C\lambda^{k}\left\|(\partial^{\alpha} \psi)_{\lambda}\right\|_{L^{r,1}(\mathbb{R}^{n})}\|u\|_{L^{p,\infty}(\mathbb{R}^{n})}\\
=& C\lambda^{k+n\left(1-\frac{1}{r}\right)}\left\|\partial^{\alpha} \psi\right\|_{L^{r,1}(\mathbb{R}^{n})}\|u\|_{L^{p,\infty}(\mathbb{R}^{n})}\\
=& C\lambda^{k+n\left(\frac{1}{p}-\frac{1}{q}\right)}\left\|\partial^{\alpha} \psi\right\|_{L^{r,1}(\mathbb{R}^{n})}\|u\|_{L^{p,\infty}(\mathbb{R}^{n})}.
\ea$$
By a similar argument used in the proof of \eqref{bern1}, we may derive that $\left\|\partial^{\alpha} \psi\right\|_{L^{r,1}(\mathbb{R}^{n})}<\infty$. This implies \eqref{bern2}.

(3)\,\,As $\partial^{\alpha} \psi\in \mathcal{S}(\mathbb{R}^{n})\subset L^{1}(\mathbb{R}^{n})$, Lemma \ref{YoungGen} enable us to deduce that
$$\ba
\|\partial^{\alpha}u \|_{L^{q,l}(\mathbb{R}^{n})}=& \lambda^{k}\left\|(\partial^{\alpha} \psi)_{\lambda}\ast u\right\|_{L^{q,l}(\mathbb{R}^{n})}\\
\leq & C\lambda^{k}\left\|(\partial^{\alpha} \psi)_{\lambda}\right\|_{L^{1}(\mathbb{R}^{n})}\|u\|_{L^{q,l}(\mathbb{R}^{n})}\\
=& C\lambda^{k}\left\|\partial^{\alpha} \psi\right\|_{L^{1}(\mathbb{R}^{n})}\|u\|_{L^{q,l}(\mathbb{R}^{n})},
\ea$$
where $C=C(q,l)>0$. This implies \eqref{bern3}.

(4)\,\,Observe that \eqref{bern3} implies the second inequality of \eqref{bern4}, it suffices to show the first inequality in \eqref{bern4}. Let $\eta$ be a Schwartz function on $\mathbb{R}^{n}$ such that $\chi_{\mathcal{C}}\leq \hat{\eta}\leq \chi_{\tilde{\mathcal{C}}}$, where $\tilde{\mathcal{C}}=\left\{\xi \in \mathbb{R}^{n}:  r_{1}/2 \leq|\xi| \leq 2 r_{2}\right\}$. It follows from $ \operatorname{supp} \hat{u} \subset\lambda \mathcal{C}$ that for all $\xi\in\mathbb{R}^{n}$,
$$
\hat{u}(\xi)=\sum_{|\alpha|=k} \frac{(-i \xi)^{\alpha}}{|\xi|^{2 k}} \hat{\eta}\left(\xi/\lambda\right)(i \xi)^{\alpha} \hat{u}(\xi)=\lambda^{-k} \sum_{|\alpha|=k} \frac{(-i \xi / \lambda)^{\alpha}}{|\xi/ \lambda|^{2 k}} \hat{\eta}\left(\xi/\lambda\right)\mathcal{F}(\partial^{\alpha}u)(\xi).  $$
Therefore, we may write
$$
u=\lambda^{-k} \sum_{|\alpha|=k} (g_{\alpha})_{\lambda} \ast \partial^{\alpha} u,
$$
where $(g_{\alpha})_{\lambda}(x)=\lambda^{n} g_{\alpha}(\lambda x)$ for all $x\in\mathbb{R}^{n}$, and
$$
 g_{\alpha}=\mathcal{F}^{-1}\left(\frac{(-i \xi)^{\alpha}}{|\xi|^{2 k}} \hat{\eta}(\xi)\right) \in \mathcal{S}(\mathbb{R}^{n})\subset L^{1}(\mathbb{R}^{n}).
$$
This together with Lemma \ref{YoungGen} yields that a constant $C=C(n,q,l,k)>0$ exists such that
\begin{align*}
\|u\|_{L^{q,l}(\mathbb{R}^{n})} & \leq C\lambda^{-k}\sum_{|\alpha|=k}\left\|(g_{\alpha})_{\lambda}\right\|_{L^{1}(\mathbb{R}^{n})}\left\|\partial^{\alpha} u\right\|_{L^{q,l}(\mathbb{R}^{n})} \\
 &\leq C\lambda^{-k}\left(\sum_{|\alpha|=k}\left\|g_{\alpha}\right\|_{L^{1}(\mathbb{R}^{n})}\right) \sup _{|\alpha|=k}\left\|\partial^{\alpha} u\right\|_{L^{q,l}(\mathbb{R}^{n})}.
\end{align*}
This concludes \eqref{bern4} and the proof is complete.
\end{proof}
\subsection{Besov-Lorentz spaces, Sobolev-Lorentz spaces and Triebel-Lizorkin-Lorentz spaces}
  $\mathcal{S}$ denotes the Schwartz class of rapidly decreasing functions, $\mathcal{S}'$ the
space of tempered distributions, $\mathcal{S}'/\mathcal{P}$ the quotient space of tempered distributions which modulo polynomials.
  We use $\mathcal{F}f$ or $\widehat{f}$ to denote the Fourier transform of a tempered distribution $f$.
To define Besov-Lorentz spaces, we need the following dyadic unity partition
(see e.g. \cite{[BCD]}). Choose two nonnegative radial
functions $\varrho$, $\varphi\in C^{\infty}(\mathbb{R}^{n})$ be
supported respectively in the ball $\{\xi\in
\mathbb{R}^{n}:|\xi|\leq \frac{4}{3} \}$ and the shell $\{\xi\in
\mathbb{R}^{n}: \frac{3}{4}\leq |\xi|\leq
  \frac{8}{3} \}$ such that
\begin{equation*}
 \varrho(\xi)+\sum_{j\geq 0}\varphi(2^{-j}\xi)=1, \quad
 \forall\xi\in\mathbb{R}^{n}; \qquad
 \sum_{j\in \mathbb{Z}}\varphi(2^{-j}\xi)=1, \quad \forall\xi\neq 0.
\end{equation*}
The nonhomogeneous dyadic
blocks $\Delta_{j}$ are defined by
$$
\ba
\Delta_{j} u:=0 ~~ \text{if} ~~ j \leq-2, ~~ \Delta_{-1} u:=\varrho(D) u ,~~\Delta_{j} u:=\varphi\left(2^{-j} D\right) u  ~~\text{if}~~ j \geq 0,~~
S_{j}u:= \sum_{k\leq j-1}\Delta_{k}u.
\ea
$$
The homogeneous Littlewood-Paley operators are defined as follows
\begin{equation*}
  \forall j\in\mathbb{Z},\quad \dot{\Delta}_{j}f:= \varphi(2^{-j}D)f ~~\text { and }~~ \dot{S}_{j}f:= \sum_{k\leq j-1}\dot{\Delta}_{k}f.
\end{equation*}
The homogeneous Besov-Lorentz   space $ \dot{B}^{s}_{p,q,r}(\mathbb{R}^{n})$ is the set of $f\in\mathcal{S}'(\mathbb{R}^{n})/\mathcal{P}(\mathbb{R}^{n})$ such that
\begin{equation*}
  \norm{f}_{\dot{B}^{s}_{p,q,r}}:=\norm{\left\{2^{js}\norm{\dot{\Delta}
  _{j}f}_{L^{p,q}(\mathbb{R}^{n})}\right\}}_{\ell^{r}(\mathbb{Z})}<\infty.
\end{equation*}
Here $\ell^{r}(\mathbb{Z})$ represents the set of sequences with summable $r$-th powers.
The homogeneous Sobolev-Lorentz norm $\|\cdot\|_{\dot{H}^{s}_{p,p_{1}}(\mathbb{R}^{n})}$  is defined as $\|f\|_{\dot{H}^{s}_{p,p_{1}}(\mathbb{R}^{n})}=\|\Lambda^{s}f\|_{L^{p,p_{1}}(\mathbb{R}^{n})}.$
When $p_{1}=p$, the Sobolev-Lorentz spaces reduce to the classical Sobolev spaces $\dot{H}^{s}_{p}(\mathbb{R}^{n})$.

For $p,q,r\in (0,\infty]$ and $s \in \mathbb{R},$ the homogeneous Triebel-Lizorkin-Lorentz space $\dot{F}_{p, q,r}^{s}(\mathbb{R}^{n})$ is defined by
$$
\dot{F}_{p, q,r}^{s}(\mathbb{R}^{n})=\left\{f \in \mathcal{S}'(\mathbb{R}^{n})/\mathcal{P}(\mathbb{R}^{n}) : \|f\|_{\dot{F}_{p, q,r}^{s}}<\infty\right\}.
$$
Here
$$
\|f\|_{\dot{F}_{p, q,r}^{s}}:= \left\{  \ba
 &\| \{\sum_{j=-\infty}^{\infty} (2^{j s } |\dot{\Delta}_{j} f |)^{r}\}^{\frac{1}{r}} \|_{L^{p,q}(\mathbb{R}^{n})},  ~~~0<r<\infty, \\
 &\|\sup _{j \in \mathbb{Z}} 2^{j s} |\dot{\Delta}_{j} f | \|_{L^{p,q}(\mathbb{R}^{n})}, \quad r=\infty. &\ea\right.
$$

\begin{lemma}\label{lem2.5}
Suppose that $f\in\dot{H}^{s}_{p,\infty}(\mathbb{R}^{n})$ with $s\in \mathbb{R}$ and $p\in (1,\infty]$. Then there holds
\be\label{keyinclue}
 \|f\|_{ \dot{B}^{s}_{p,\infty,\infty }}\leq \|f\|_{\dot{F}^{s}_{p,\infty,\infty}}\leq C\|f\|_{\dot{H}^{s}_{p,\infty}(\mathbb{R}^{n})},
\ee
where $C>0$ depends only on $n, s, p$ and $\varphi$.
\end{lemma}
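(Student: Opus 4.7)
}
The first inequality is essentially trivial: by the very definition of the norms,
$$
2^{js}|\dot{\Delta}_j f(x)| \leq \sup_{k\in\mathbb{Z}} 2^{ks}|\dot{\Delta}_k f(x)| \quad \text{for every } j\in\mathbb{Z} \text{ and a.e. } x,
$$
so taking $L^{p,\infty}$-norms on both sides (using that $\|g\|_{L^{p,\infty}} \leq \|h\|_{L^{p,\infty}}$ whenever $|g|\leq h$ a.e.) and then taking $\sup_j$ on the left-hand side yields $\|f\|_{\dot{B}^s_{p,\infty,\infty}} \leq \|f\|_{\dot{F}^s_{p,\infty,\infty}}$.

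For the second inequality, the plan is to realize $2^{js}\dot{\Delta}_j$ as a rescaled convolution with a fixed Schwartz kernel, after factoring out $\Lambda^s$. Set $g := \Lambda^s f$ and $\psi(\eta) := \varphi(\eta)|\eta|^{-s}$. Since $\varphi$ is smooth and supported in the annulus $\{3/4\leq|\eta|\leq 8/3\}$, $\psi$ is smooth and compactly supported away from $0$, hence $\psi\in\mathcal{S}(\mathbb{R}^n)$. A direct computation of the Fourier symbol,
$$
2^{js}\varphi(2^{-j}\xi)|\xi|^{-s} = \psi(2^{-j}\xi),
$$
shows that $2^{js}\dot{\Delta}_j f = \psi(2^{-j}D)g$. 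Writing $\tilde{\psi} := \mathcal{F}^{-1}\psi \in \mathcal{S}(\mathbb{R}^n)$ and $\tilde{\psi}_{j}(x) := 2^{jn}\tilde{\psi}(2^j x)$, we obtain $2^{js}\dot{\Delta}_j f = \tilde{\psi}_{j}\ast g$.

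Next I would use the fact that a Schwartz function admits a radially decreasing integrable majorant: there exists $\Phi(x)=C(1+|x|)^{-n-1}$ (or similar) with $|\tilde{\psi}|\leq\Phi$, whence the standard pointwise bound
$$
|\tilde{\psi}_{j}\ast g(x)| \leq C\,\mathcal{M}g(x) \quad \text{uniformly in } j\in\mathbb{Z}.
$$
Taking the supremum over $j$ gives $\sup_j 2^{js}|\dot{\Delta}_j f(x)| \leq C\mathcal{M}g(x)$ pointwise. Finally, because $\mathcal{M}$ is bounded on $L^{p,\infty}(\mathbb{R}^n)$ for every $p\in(1,\infty]$ (the equivalence $\|\mathcal{M}g\|_{L^{p,\infty}}\approx\|g\|_{L^{p,\infty}}$ is exactly the Remark following \eqref{inorm}), we conclude
$$
\|f\|_{\dot{F}^s_{p,\infty,\infty}} = \Bigl\|\sup_j 2^{js}|\dot{\Delta}_j f|\Bigr\|_{L^{p,\infty}} \leq C\|\mathcal{M}g\|_{L^{p,\infty}} \leq C\|g\|_{L^{p,\infty}} = C\|f\|_{\dot{H}^s_{p,\infty}}.
$$

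The only delicate point is the justification that $\psi$ really is Schwartz despite the factor $|\eta|^{-s}$; this is handled by the crucial observation that $\varphi$ is supported strictly away from the origin, so $|\eta|^{-s}$ is smooth on $\operatorname{supp}\varphi$ for every $s\in\mathbb{R}$. Everything else is routine: Fourier-symbol manipulation, the Schwartz-majorant/maximal-function estimate, and the $L^{p,\infty}$ boundedness of $\mathcal{M}$ recorded earlier.
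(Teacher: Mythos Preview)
Your proof is correct and follows essentially the same route as the paper: both arguments reduce the second inequality to the pointwise estimate $\sup_j |\tilde{\psi}_j\ast g(x)|\leq C\mathcal{M}g(x)$ for the Schwartz kernel $\tilde{\psi}=\mathcal{F}^{-1}(|\xi|^{-s}\varphi)$, and then invoke the $L^{p,\infty}$-boundedness of $\mathcal{M}$ from \eqref{inorm}. The only cosmetic difference is that the paper writes out the dyadic-annulus computation for the maximal-function bound explicitly, whereas you cite it as standard.
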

\begin{remark}
It is worth remarking that the nonhomogeneous embedding $H^{s}_{p,\infty}\hookrightarrow F^{s}_{p,\infty,\infty}$ was recently proved by Ko and Lee in \cite{[KL]} and they mentioned that the results hold for the homogeneous case. One can modify the argument in \cite{[KL]} to get the homogeneous case $\dot{H}^{s}_{p,\infty}\hookrightarrow\dot{F}^{s}_{p,\infty,\infty}$. Here, we shall present a different proof via the Hardy-Littlewood maximal function.
\end{remark}
\begin{proof}
Since it is obvious that
$$
\|f\|_{ \dot{B}^{s}_{p,\infty,\infty }}=\sup _{j \in \mathbb{Z}}\| 2^{j s} |\dot{\Delta}_{j} f | \|_{L^{p,\infty}(\mathbb{R}^{n})}\leq \|\sup _{j \in \mathbb{Z}} 2^{j s} |\dot{\Delta}_{j} f | \|_{L^{p,\infty}(\mathbb{R}^{n})}=\|f\|_{\dot{F}^{s}_{p,\infty,\infty}},$$
we focus on the proof
$$
\|f\|_{\dot{F}^{s}_{p,\infty,\infty}}\leq C\|f\|_{\dot{H}^{s}_{p,\infty}(\mathbb{R}^{n})}.
$$
It suffices to show that for all functions $g\in L^{p,\infty}(\mathbb{R}^{n})$,
$$ \|\sup _{j \in \mathbb{Z}}|\phi_{j}\ast g | \|_{L^{p,\infty}(\mathbb{R}^{n})}\leq C\|g\|_{L^{p,\infty}(\mathbb{R}^{n})},$$
where $\phi=\left(|\xi|^{-s}\varphi(\xi)\right)^{\vee}$ and $\phi_{j}(x)=2^{jn}\phi(2^{j}x)$ for all $x\in \mathbb{R}^{n}$.

To this end, observe that $\phi$ is a Schwartz function on $\mathbb{R}^{n}$ and there exists a positive constant $C$ such that for all $x\in \mathbb{R}^{n}$, $|\phi(x)|\leq C (1+|x|)^{-n-1}.$ This yields that for all $j \in \mathbb{Z}$ and $x\in \mathbb{R}^{n}$,
\begin{align*}
|\phi_{j}\ast g(x)|&\leq C\,2^{jn}\int_{\mathbb{R}^{n}}|g(x-y)|(1+|2^{j}y|)^{-n-1}dy
\\& =  C\sum_{k=-\infty }^{\infty}2^{jn}\int_{2^{k}<|y|\leq2^{k+1}}|g(x-y)|(1+|2^{j}y|)^{-n-1}dy\\
&\leq  C\sum_{k=-\infty }^{\infty}2^{jn}(1+2^{j+k})^{-n-1}\int_{2^{k}<|y|\leq2^{k+1}}|g(x-y)|dy\\
&\leq  C\sum_{k=-\infty }^{\infty}2^{(j+k)n}(1+2^{j+k})^{-n-1}\mathcal{M}g(x)\\
&=  C\sum_{k=-\infty }^{\infty}2^{kn}(1+2^{k})^{-n-1}\mathcal{M}g(x)\\
&\leq  C\,\mathcal{M}g(x)(\sum_{k=-\infty }^{-1}2^{kn}+\sum_{k=0}^{\infty}2^{-k})\\
&\leq  C\,\mathcal{M}g(x),
\end{align*}
which implies that there exists a positive constant $C=C(n,\phi)$ such that for all $x\in \mathbb{R}^{n}$,
\be\label{lorentjida}
 \sup _{j \in \mathbb{Z}}|\phi_{j}\ast g(x) | \leq  C\,\mathcal{M}g(x).
 \ee
Then it follows from \eqref{inorm} that
$$ \|\sup _{j \in \mathbb{Z}}|\phi_{j}\ast g | \|_{L^{p,\infty}(\mathbb{R}^{n})}\leq C\|\mathcal{M}g\|_{L^{p,\infty}(\mathbb{R}^{n})} \leq C\|g\|_{L^{p,\infty}(\mathbb{R}^{n})},$$
where $C>0$ depends only on $n, p$ and $\phi$. This concludes the proof.
\end{proof}

\section{Proof of Theorem \ref{the1.1} and \ref{the1.2}}

\subsection{Gagliardo-Nirenberg inequalities in Lorentz   spaces}
  The goal of this subsection is to prove Theorem \ref{the1.1} involving Gagliardo-Nirenberg inequalities in Lorentz spaces.
Firstly, we establish three key inequalities, which play an important role in the proof of three cases in Theorem \ref{the1.1}.
Secondly, in view of a pointwise interpolation estimate for derivatives found in \cite{[BCD]} and equivalent norms of Lorentz spaces, we get
Proposition \ref{keypointwise}. Finally, we are in a position to show Theorem \ref{the1.1}.

\begin{lemma}\label{lem3.1}
\begin{enumerate}[(1)]

\item Suppose that $u \in \dot{H}_{r,p_{1}}^{s}\left(\mathbb{R}^{n}\right)$
    with $1 < r<\infty$, $0 \leq s<n / r$ and $0< p_{1}\leq\infty$. Then there exists a positive constant $C=C(n,s,p,r,p_{1})$ such that
\be\label{3.1}
\|u\|_{L^{p,p_{1}}(\mathbb{R}^{n})} \leq C\|\Lambda^{s}u\|_{L^{r ,p_{1}}(\mathbb{R}^{n})}
~~~\text{with}~~~  \frac{n}{p}=\frac{n}{r}-s.
\ee
\item Let $1 < q,p,r<\infty$, $\,1 \leq l \leq \infty$ and $ s= {n}/{r}.$  Then there exists a positive constant $C $ such that for all functions $u \in L^{p,\infty}\left(\mathbb{R}^{n}\right) \cap \dot{B}^{s}_{r,\infty,\infty}\left(\mathbb{R}^{n}\right)$, there holds
$$\ba
\|u\|_{L^{q,l}(\mathbb{R}^{n})}
 \leq C \|u\|_{L^{p,\infty}(\mathbb{R}^{n})}^{\f{p}{q}}  \| u\|^{1-\f{p}{q}}_{\dot{B}^{s}_{r,\infty,\infty}}~~~\text{with}~~~p<q.
 \ea$$
 \item Let $u \in L^{p,\infty}\left(\mathbb{R}^{n}\right) \cap \dot{B}^{s}_{r,\infty,\infty}\left(\mathbb{R}^{n}\right)$ with $s>n / r  $ and $1 < p, r\leq \infty.$    Then there exists a positive constant $C=C(n,s,p,r)$ such that
$$
\|u\|_{L^{\infty}(\mathbb{R}^{n})} \leq C \|u\|_{L^{p,\infty}(\mathbb{R}^{n})}^{\theta}\|u\|_{\dot{B}^{s}_{r,\infty,\infty}}^{1-\theta},
$$
where
$$
0=\theta \frac{n}{p}+(1-\theta)\left(\frac{n}{r}-s\right),~~0<\theta\leq 1.
$$
 \end{enumerate}
 \end{lemma}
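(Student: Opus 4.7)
The three statements split naturally according to whether $s$ is subcritical, critical, or supercritical with respect to $L^r$. Part~(1) will follow from a Riesz potential representation and the Lorentz Young inequality~\eqref{young}; parts~(2) and (3) will both rely on a Littlewood--Paley splitting $u = \dot{S}_N u + (u - \dot{S}_N u) =: u_L + u_H$ at a level $N$ to be chosen, combined with the Bernstein inequalities of Lemma~\ref{lem2.4}. For (1), the case $s = 0$ is trivial; for $0 < s < n/r$, I would write $u = I_s \ast (\Lambda^s u)$, where the Riesz kernel $I_s(x) = c_{n,s}|x|^{s-n}$ lies in $L^{n/(n-s),\infty}(\mathbb{R}^n)$, and apply~\eqref{young} with the pairs $(n/(n-s),\infty)$ and $(r,p_1)$: the first Lorentz indices combine via $(n-s)/n + 1/r = 1/p + 1$ to give $n/p = n/r - s$, while the second indices combine to $p_1$. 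The hypotheses $1 < r < \infty$ and $s < n/r$ ensure that all first indices in~\eqref{young} lie strictly inside $(1,\infty)$.

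For (2), I would choose $N = N(\alpha)$ as the largest integer with $C\, 2^{Nn/p}\|u\|_{L^{p,\infty}} \leq \alpha$, so that Bernstein~\eqref{bern1} gives $\|u_L\|_{L^\infty} \leq \alpha$ and hence $|\{|u_L| > \alpha\}| = 0$; this fixes $2^N \approx (\alpha/\|u\|_{L^{p,\infty}})^{p/n}$. Fix any $q_1 > \max\{r,\, q - p,\, 1\}$. Combining Bernstein~\eqref{bern2} applied block-by-block with the critical Besov estimate $\|\dot{\Delta}_j u\|_{L^{r,\infty}} \leq 2^{-jn/r}\|u\|_{\dot{B}^{n/r}_{r,\infty,\infty}}$ produces $\|\dot{\Delta}_j u\|_{L^{q_1,\infty}} \leq C\, 2^{-jn/q_1}\|u\|_{\dot{B}^{n/r}_{r,\infty,\infty}}$, which sums geometrically to $\|u_H\|_{L^{q_1,\infty}} \leq C\, 2^{-Nn/q_1}\|u\|_{\dot{B}^{n/r}_{r,\infty,\infty}}$. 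Chebyshev's inequality then yields $|\{|u| > 2\alpha\}| \leq C\,\alpha^{-q_1 - p}\|u\|_{L^{p,\infty}}^{p}\|u\|_{\dot{B}^{n/r}_{r,\infty,\infty}}^{q_1}$ for large $\alpha$, while the trivial bound $|\{|u| > \alpha\}| \leq \alpha^{-p}\|u\|_{L^{p,\infty}}^p$ controls the small-$\alpha$ regime. Splitting the layer-cake integral for $\|u\|_{L^q}^q$ at the natural threshold $\alpha_0 = \|u\|_{\dot{B}^{n/r}_{r,\infty,\infty}}$ balances the two regimes and produces $\|u\|_{L^q} \leq C\,\|u\|_{L^{p,\infty}}^{p/q}\|u\|_{\dot{B}^{n/r}_{r,\infty,\infty}}^{1 - p/q}$. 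Running this argument at two distinct exponents $q_a < q < q_b$ and invoking the interpolation characteristic~\eqref{Interpolation characteristic} upgrades the bound to $L^{q,l}$ for arbitrary $l \in [1,\infty]$.

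For (3), the strictly subcritical Besov scale makes $\sum_{j \geq N}\|\dot{\Delta}_j u\|_{L^\infty}$ converge on its own. Bernstein~\eqref{bern1} yields both $\|u_L\|_{L^\infty} \leq C\, 2^{Nn/p}\|u\|_{L^{p,\infty}}$ and $\|\dot{\Delta}_j u\|_{L^\infty} \leq C\, 2^{j(n/r - s)}\|u\|_{\dot{B}^s_{r,\infty,\infty}}$; summing the latter over $j \geq N$ (where $s > n/r$ makes the series geometric) gives $\|u_H\|_{L^\infty} \leq C\, 2^{-N(s - n/r)}\|u\|_{\dot{B}^s_{r,\infty,\infty}}$. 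Choosing $N$ so that the two pieces balance will produce the claimed estimate with $\theta = (s - n/r)/(n/p + s - n/r)$, which is exactly the exponent dictated by scaling.

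\textbf{Main obstacle.} The delicate point is in~(2). Bernstein~\eqref{bern2} in Lorentz spaces demands the target index to exceed both $r$ and $1$, so the ``natural'' choice $q_1 = q - p$ (which would give a direct $L^{q,\infty}$ bound by matching $\alpha$-exponents, as in Wadade's approach) can violate these conditions when $q - p$ is small, precisely the situation that separates the present Lemma from the existing results. The layer-cake splitting at $\alpha_0 = \|u\|_{\dot{B}^{n/r}_{r,\infty,\infty}}$ circumvents this by permitting $q_1$ to be any admissible value $> \max\{r,\, q - p,\, 1\}$ rather than being pinned to a single exponent, and the central technical step is to verify that the two integral regimes recombine so that the powers of $\|u\|_{L^{p,\infty}}$ and $\|u\|_{\dot{B}^{n/r}_{r,\infty,\infty}}$ collapse to $p/q$ and $1 - p/q$, independently of the auxiliary choice of $q_1$.
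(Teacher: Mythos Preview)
Your arguments for parts~(1) and~(3) coincide with the paper's: the Riesz-potential representation plus the Lorentz Young inequality~\eqref{young} for~(1), and the low/high-frequency $L^\infty$ splitting with Bernstein~\eqref{bern1} and an optimized cut level for~(3).

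For part~(2) your route is correct but genuinely different from the paper's. The paper works directly in $L^{q,l}$: it splits $\|u\|_{L^{q,l}}^{*} \leq \|\dot S_{j_0}u\|_{L^{q,l}}^{*} + \sum_{j\geq j_0}\|\dot\Delta_j u\|_{L^{q,l}}^{*}$ at a \emph{fixed} level $j_0$, bounds the low part by Bernstein~\eqref{bern2} plus Lemma~\ref{YoungGen}, and for each high-frequency block applies the interpolation characteristic~\eqref{Interpolation characteristic} between $L^{p,\infty}$ and the auxiliary space $L^{q+r,\infty}$ (the latter reached from $L^{r,\infty}$ via~\eqref{bern2}); the resulting geometric tail is then balanced against the low part by choosing $j_0$. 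Your argument instead lets the cut level $N$ depend on the threshold $\alpha$, kills the low part in $L^\infty$ outright, and pushes the high part into $L^{q_1,\infty}$ for an auxiliary $q_1>\max\{r,q-p\}$; the distribution-function bound and layer-cake integral then give the $L^q$ estimate, with a final interpolation at two neighbouring exponents to reach $L^{q,l}$. Both approaches introduce an auxiliary Lorentz index ($q+r$ for the paper, $q_1$ for you) precisely to keep Bernstein~\eqref{bern2} legal, and both verify that this auxiliary choice disappears from the final exponents. The paper's path is a bit more direct---it lands in $L^{q,l}$ in one stroke rather than via a subsequent interpolation---while yours is closer in spirit to Wadade's level-set method and makes the independence from $q_1$ very transparent.
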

 \begin{remark}
As a corollary of this lemma and imbedding relation in Lemma \ref{lem2.5}, we have
\begin{align}
\label{cripre1}\|\Lambda^{\sigma}u\|_{L^{q,l}(\mathbb{R}^{n})}
 \leq C \|\Lambda^{\sigma}u\|_{L^{p,\infty}(\mathbb{R}^{n})}^{\f{p}{q}}  \| \Lambda^{ \sigma+\f{n}{r}}u\|^{1-\f{p}{q}}_{L^{r,\infty }(\mathbb{R}^{n})}
\end{align}
with $1<p<q<\infty,\,1<r<\infty$ and $1 \leq l \leq \infty$, and
\begin{align}
\label{suppre1}
\|\Lambda^{\sigma}u\|_{L^{\infty}(\mathbb{R}^{n})} \leq C\|\Lambda^{\sigma}u\|_{L^{p,\infty}(\mathbb{R}^{n})}^{\theta}
\|\Lambda^{s}u\|_{L^{r,\infty }(\mathbb{R}^{n})}^{1-\theta}
\end{align}
with $0=\theta n/p+(1-\theta)\left(n/r-s+\sigma\right)$, $0<\theta\leq 1$, $s-\sigma>n/r$ and $1 < p, r\leq \infty$.
 \end{remark}
\begin{remark}
In this lemma, the necessity of the condition that $p,r>1$ results from Lemma \ref{YoungGen}, Young inequality \eqref{young} and Bernstein inequality \eqref{bern2} for Lorentz spaces. Due to \eqref{thirddefincoro}, it is also essential to make the assumption that $q>1$ and $l \geq 1$ to ensure that the space $L^{q,l}(\mathbb{R}^{n})$ is normable.
\end{remark}
\begin{proof}[Proof of  Lemma  \ref{lem3.1}]
(1)
Thanks to Fourier transform, there exists a positive constant $C=C(n,s)$ such that
$$f=\mathcal{F}^{-1}\B(\f{1}{|\xi|^{s}}|\xi|^{s}\hat{f}(\xi) \B)=\mathcal{F}^{-1}\B(\f{1}{|\xi|^{s}}  \B) \ast\Lambda^{s}f
=C\,|\cdot|^{s-n}\ast \Lambda^{s}f.
$$
With the help of the Young inequality \eqref{young} in Lorentz spaces and the fact that $|x|^{-1}\in L^{n,\infty}$, we see that
 $$\ba
\|f \|
_{L^{p,p_{1}}(\mathbb{R}^{n})}
\leq&
C \||\cdot|^{ s-n}\ast \Lambda^{s}f \|
_{L^{p,p_{1}}(\mathbb{R}^{n})}\\
\leq& C \||\cdot|^{ s-n} \|
_{L^{\f{n}{ n-s},\infty}(\mathbb{R}^{n})}\|\Lambda^{s}f \|
_{L^{r ,p_{1}}(\mathbb{R}^{n})}\\
\leq& C \|\Lambda^{s}f \|
_{L^{r ,p_{1}}(\mathbb{R}^{n})}.\ea
$$
(2)
By means of the low and high frequencies, it follows from \eqref{thirddefincoro} that
\be\ba\label{3.4}
\|u\|_{L^{q,l}(\mathbb{R}^{n})}\leq\|u\|_{L^{q,l}(\mathbb{R}^{n})}^{*}
& \leq \|\dot{S}_{j_{0}}u\|_{L^{q,l}(\mathbb{R}^{n})}^{*}+\sum_{j\geq j_{0}}\|\dot{\Delta}_{j }u\|_{L^{q,l}(\mathbb{R}^{n})}^{*}\\
& \leq \frac{q}{q-1}\|\dot{S}_{j_{0}}u\|_{L^{q,l}(\mathbb{R}^{n})}+\frac{q}{q-1}\sum_{j\geq j_{0}}\|\dot{\Delta}_{j }u\|_{L^{q,l}(\mathbb{R}^{n})}. \ea\ee
Here $j_{0}$ is an integer to be chosen later. Observe that
$$\dot{S}_{j_{0}}u= \sum_{k\leq j_{0}-1}\dot{\Delta}_{k}u= h_{j_{0}}\ast u,$$
where $h_{j_{0}}(x)=2^{j_{0}n} h(2^{j_{0}} x)$ for all $x\in\mathbb{R}^{n}$, and
$$h=\mathcal{F}^{-1}\B(\sum_{k\leq -1}\varphi(2^{-k}\xi)\B)\in \mathcal{S}(\mathbb{R}^{n}).$$
Owing to Bernstein's inequality \eqref{bern2} and \eqref{lincreases}, we may apply Lemma \ref{YoungGen} to infer that there exists a positive constant $C$ independent of $j_{0}$ such that
\begin{align*}
\|\dot{S}_{j_{0}}u\|_{L^{q,l}(\mathbb{R}^{n})}\leq & C\,2^{j_{0}n(\f{1}{p}-\f1q)}\|\dot{S}_{j_{0}}u\|_{L^{p,\infty}(\mathbb{R}^{n})}\\
=& C\,2^{j_{0}n(\f{1}{p}-\f1q)}\|h_{j_{0}}\ast u\|_{L^{p,\infty}(\mathbb{R}^{n})}\\
\leq & C\,2^{j_{0}n(\f{1}{p}-\f1q)}\|h_{j_{0}}\|_{L^{1}(\mathbb{R}^{n})}\|u\|_{L^{p,\infty}(\mathbb{R}^{n})}\\
=& C\,2^{j_{0}n(\f{1}{p}-\f1q)}\|h\|_{L^{1}(\mathbb{R}^{n})}\|u\|_{L^{p,\infty}(\mathbb{R}^{n})}.
\end{align*}
For the high-frequency part, it follows from \eqref{Interpolation characteristic}, \eqref{lincreases}, \eqref{bern2} and Lemma \ref{YoungGen} that
\begin{align*}
\sum_{j\geq j_{0}}\|\dot{\Delta}_{j }u\|_{L^{q,l}(\mathbb{R}^{n})}\leq& C\sum_{j\geq j_{0}}\|\dot{\Delta}_{j }u\|_{L^{p,\infty}(\mathbb{R}^{n})}^{1-\alpha} \|\dot{\Delta}_{j }u\|_{L^{q+r,\infty}(\mathbb{R}^{n})}^{\alpha}\\
\leq& C\sum_{j\geq j_{0}}  2^{jn\alpha(\f{1}{r}-\f{1}{q+r})}\|\dot{\Delta}_{j }u\|_{L^{r,\infty}(\mathbb{R}^{n})}^{\alpha}\|\varphi_{j }\ast u\|_{L^{p,\infty}(\mathbb{R}^{n})}^{1-\alpha}\\
\leq& C\sum_{j\geq j_{0}}  2^{-\f{jn\alpha}{q+r}}\|u\|_{\dot{B}^{s}_{r,\infty,\infty}}^{\alpha}\|\varphi_{j }\|_{L^{1}(\mathbb{R}^{n})}^{1-\alpha} \|u\|_{L^{p,\infty}(\mathbb{R}^{n})}^{1-\alpha}\\
=&C\,2^{-\f{j_{0}n\alpha}{q+r}} \|\varphi\|_{L^{1}(\mathbb{R}^{n})}^{1-\alpha} \|u\|_{L^{p,\infty}(\mathbb{R}^{n})}^{1-\alpha}\|u\|_{\dot{B}^{s}_{r,\infty,\infty}}^{\alpha}.
\end{align*}
Here $s=n/r$, $1/q=(1-\alpha)/p+\alpha/(q+r)$ with $0<\alpha<1$, and $\varphi_{j}(x)=2^{jn}\varphi(2^{j}x)$ for all $x\in\mathbb{R}^{n}$. It turns out that
$$
 \ba
\|u\|_{L^{q,l}(\mathbb{R}^{n})}&\leq C\,2^{j_{0}n(\f{1}{p}-\f1q)}\|u\|_{L^{p,\infty}(\mathbb{R}^{n})}
+C\,2^{-\f{j_{0}n\alpha}{q+r}} \|u\|_{L^{p,\infty}(\mathbb{R}^{n})}^{1-\alpha}\|u\|_{\dot{B}^{s}_{r,\infty,\infty}}^{\alpha},
\ea
$$
where the positive constant $C$ is independent of $j_{0}$.
Since $1/p-1/q+\alpha/(q+r)=\alpha/p$, by choosing $j_{0}$ such that $2^{j_{0}n(\f{1}{p}-\f1q)}\|u\|_{L^{p,\infty}
(\mathbb{R}^{n})}\approx 2^{-\f{j_{0}n\alpha}{q+r}} \|u\|_{L^{p,\infty}(\mathbb{R}^{n})}^{1-\alpha}\|u\|_{\dot{B}^{s}_{r,\infty,\infty}}^{\alpha}$,
we may derive that
$$
 \ba
\|u\|_{L^{q,l}(\mathbb{R}^{n})}&\leq   C \|u\|_{L^{p,\infty}(\mathbb{R}^{n})}^{\f{p}{q}}  \| u\|^{1-\f{p}{q}}_{\dot{B}^{s}_{r,\infty,\infty}}.
\ea
$$
(3) If $s>  {n}/{r}$, we take $q= l=\infty$ in \eqref{3.4}.
Using  Bernstein's inequality \eqref{bern1} and Young inequality for Lorentz spaces, we find that
$$\ba
\|\dot{S}_{j_{0}}u\|_{L^{\infty}(\mathbb{R}^{n})}\leq C\,2^{\f{j_{0}n}{p}}\|u\|_{L^{p,\infty}(\mathbb{R}^{n})} ~~\text{and}~~ \sum_{j\geq j_{0}}\|\dot{\Delta}_{j }u\|_{L^{\infty}(\mathbb{R}^{n})}\leq
C\,2^{ {j_{0}n}(\f1r-\f{s}{n})} \| u\|_{\dot{B}^{s}_{r,\infty,\infty}},
\ea$$
where the positive constant $C$ is independent of $j_{0}$. As the derivation of the above, we may choose $j_{0}$ appropriately to conclude that
$$
 \ba
\|u\|_{L^{\infty}(\mathbb{R}^{n})}&\leq C\,2^{\f{j_{0}n }{p}}\|u\|_{L^{p,\infty}(\mathbb{R}^{n})}+C\,2^{ {j_{0}n}(\f1r-\f{s}{n})} \| u\|_{\dot{B}^{s}_{r,\infty,\infty}}\\
&\leq C  \|u\|_{L^{p,\infty}(\mathbb{R}^{n})}^{1-\f{\f{1}{p}}{\f{1}{p}-\f1r+\f{s}{n}}}  \| u\|^{\f{\f{1}{p}}{\f{1}{p}-\f1r+\f{s}{n}}}_{\dot{B}^{s}_{r,\infty,\infty}}.
\ea
$$
This completes the proof of this lemma.
\end{proof}

\begin{prop}\label{keypointwise}
Assume that $u \in L^{q,q_{1}}\left(\mathbb{R}^{n}\right) \cap \dot{H}_{r,r_{1}}^{s}\left(\mathbb{R}^{n}\right) $
with  $1<q, r \leq \infty$ and $1\leq q_{1}, r_{1} \leq \infty$. Then there holds for
 $0 < \sigma<s<\infty $,
$$
\|\Lambda^{\sigma}u\|_{L^{p,p_{1}}(\mathbb{R}^{n})} \leq C\|u\|_{L^{q,q_{1}}(\mathbb{R}^{n})}^{1-\frac{\sigma}{s}}
\|\Lambda^{s}u\|_{L^{r,r_{1}}(\mathbb{R}^{n})}^{\frac{\sigma}{s}},
$$
with
$$
\frac{1}{p}=\left(1-\frac{\sigma}{s}\right) \frac{1}{q}+\frac{ \sigma}{sr},~~\frac{1}{p_{1}}=\left(1-\frac{\sigma}{s}\right) \frac{1}{q_{1}}+  \frac{\sigma }{s r_{1}}.
$$
Here $C$ is a positive constant depending only on $q,r,q_{1},r_{1},s,\sigma$ and $n$.
\end{prop}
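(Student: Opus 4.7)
The plan is to reduce the inequality to a pointwise product estimate and then close it with H\"older's inequality in Lorentz spaces together with the maximal function equivalence \eqref{inorm}. The key ingredient is the pointwise interpolation bound \eqref{bcd} from \cite{[BCD]} (already invoked in Section~1), which for $0<\sigma<s$ gives
\[
|\Lambda^{\sigma} u(x)|\leq C\,(\mathcal{M} u(x))^{1-\sigma/s}\bigl(\mathcal{M}(\Lambda^{s} u)(x)\bigr)^{\sigma/s}
\quad\text{a.e.\ in }\mathbb{R}^{n}.
\]
Writing $\theta=\sigma/s\in(0,1)$, the task becomes bounding $\|f^{1-\theta}g^{\theta}\|_{L^{p,p_{1}}(\mathbb{R}^{n})}$ with $f=\mathcal{M}u$ and $g=\mathcal{M}(\Lambda^{s}u)$.

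Taking the $L^{p,p_{1}}$-norm of the pointwise bound and applying H\"older's inequality \eqref{HolderIQ} with the paired exponents $(q/(1-\theta),q_{1}/(1-\theta))$ and $(r/\theta,r_{1}/\theta)$, I observe that the compatibility relations for H\"older reduce exactly to the stated identities $\frac{1}{p}=\frac{1-\theta}{q}+\frac{\theta}{r}$ and $\frac{1}{p_{1}}=\frac{1-\theta}{q_{1}}+\frac{\theta}{r_{1}}$. Invoking the power identity $\||f|^{\lambda}\|_{L^{a,b}(\mathbb{R}^{n})}=\|f\|^{\lambda}_{L^{\lambda a,\lambda b}(\mathbb{R}^{n})}$ recorded in Section~2 with $\lambda\in\{1-\theta,\theta\}$, I obtain
\[
\|\Lambda^{\sigma}u\|_{L^{p,p_{1}}(\mathbb{R}^{n})}\leq C\,\|\mathcal{M}u\|_{L^{q,q_{1}}(\mathbb{R}^{n})}^{1-\theta}\,\|\mathcal{M}(\Lambda^{s}u)\|_{L^{r,r_{1}}(\mathbb{R}^{n})}^{\theta}.
\]
Absorbing the maximal operators via the norm equivalence $\|\mathcal{M}f\|_{L^{a,b}(\mathbb{R}^{n})}\approx\|f\|_{L^{a,b}(\mathbb{R}^{n})}$ from \eqref{inorm}, which is exactly where the restriction $q,r>1$ becomes essential since the equivalence requires the first index to exceed $1$, yields the claim.

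I do not anticipate a deep obstacle: the substantive content is packaged into \eqref{bcd}, and the remainder is routine bookkeeping with the Lorentz-space identities from Section~2. The only points worth checking are that $u$ and $\Lambda^{s}u$ are locally integrable so that $\mathcal{M}u$ and $\mathcal{M}(\Lambda^{s}u)$ are well-defined a.e.\ (which follows from the inclusion $L^{q,q_{1}}\hookrightarrow L^{1}_{\mathrm{loc}}$ via \eqref{Inclusion} and $q>1$, and similarly for $\Lambda^{s}u\in L^{r,r_{1}}$), and that the endpoints $q=\infty$ or $r=\infty$ collapse the corresponding Lorentz space to $L^{\infty}$ without affecting any step in the argument.
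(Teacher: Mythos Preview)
Your proposal is correct and follows essentially the same approach as the paper: invoke the pointwise interpolation bound \eqref{bcd}, apply H\"older's inequality \eqref{HolderIQ} in Lorentz spaces together with the power identity, and then remove the maximal functions via \eqref{inorm}. The paper's proof is identical in structure and detail, so there is nothing to add.
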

\begin{remark}As a special case of this proposition, there holds the following estimate
$$
\|\Lambda^{\sigma}u\|_{L^{p,\infty}(\mathbb{R}^{n})} \leq C\|u\|_{L^{q,\infty}(\mathbb{R}^{n})}^{1-\frac{\sigma}{s}}
\|\Lambda^{s}u\|_{L^{r,\infty}(\mathbb{R}^{n})}^{\frac{\sigma}{s}},
$$
where $q,r,p,s$ and $\sigma$ satisfy the same conditions as in Proposition \ref{keypointwise}. This inequality will be used in the proof of Theorem \ref{the1.1}.
\end{remark}
\begin{remark}
Very recently, by means of  Maz'ya-Shaposhnikova pointwise estimates in \cite{[MS]}, Fiorenza-Formica-Rosaria-Soudsky  \cite{[FFRS]} showed  the following
estimate
$$
\left\|\nabla^{j} u\right\|_{L^{p, p_{1}}(\mathbb{R}^{n})} \leq C\left\|\nabla^{k} u\right\|_{L^{r, r_{1}}(\mathbb{R}^{n})}^{\frac{j}{k}}\|u\|_{L^{q, q_{1}}(\mathbb{R}^{n})}^{1-\frac{j}{k}},
$$
where $j, k \in \mathbb{N}, 1 \leq j<k, 1<q, r\leq \infty,$ $1\leq q_{1}, r_{1 }\leq \infty$ and
$$
\frac{1}{p}=\frac{\frac{j}{k}}{r}+\frac{1-\frac{j}{k}}{q}, \quad \frac{1}{p_{1}}=\frac{\frac{j}{k}}{r_{1}}+\frac{1-\frac{j}{k}}{q_{1}}.
$$
Proposition  \ref{keypointwise} extends the aforementioned integer cases of Gagliardo-Nirenberg inequalities in \cite{[FFRS]} to the fractional cases.
\end{remark}
\begin{proof}
Thanks to the decomposition of low and high frequencies, one can derive the following pointwise estimate
\be\label{bcd}
 \left|\Lambda^{\sigma} u(x)\right| \leq C(\mathcal{M} u(x))^{1-\frac{\sigma}{s}}\left(\mathcal{M} \Lambda^{s} u(x)\right) ^{\frac{\sigma}{s}},
\ee
whose proof can be found in \cite[p.84]{[BCD]}.

As a consequence, it follows from the H\"older's inequality for Lorentz spaces that
$$\ba
 \|\Lambda^{\sigma} u \|_{L^{p,p_{1}}(\mathbb{R}^{n})}&\leq C \|(\mathcal{M} u )^{1-\frac{\sigma}{s}}\left(\mathcal{M} \Lambda^{s} u \right) ^{\frac{\sigma}{s}}\|_{L^{p,p_{1}}(\mathbb{R}^{n})}\\& \leq C \|( \mathcal{M} u )^{1-\f{\sigma}{s}}\|_{L^{\f{sq}{s-\sigma}, \f{sq_{1}}{s-\sigma}}(\mathbb{R}^{n})}\|\left( \mathcal{M} \Lambda^{s} u \right)^{\f{\sigma}{s}}\|_{L^{\f{ sr}{\sigma}, \f{s r_{1}}{\sigma}}(\mathbb{R}^{n})}\\& \leq C \|\mathcal{M} u \|^{1-\f{\sigma}{s}}_{L^{q,  q_{1}  }(\mathbb{R}^{n})}\|\mathcal{M}\left(  \Lambda^{s } u\right)\|^{\f{\sigma}{s}}_{L^{r, r_{1} }(\mathbb{R}^{n})}.
 \ea$$
According to \eqref{inorm}, we conclude the desired estimate.
\end{proof}

Now, at this stage, we can prove Theorem \ref{the1.1}.

\begin{proof}[Proof of Theorem \ref{the1.1}]
($I$) First, we consider \eqref{GNL} under the hypothesis that $0< s-\sigma<\frac{n}{r}$.

($I_{1}$) If $\sigma=0$, it follows from the interpolation characteristic \eqref{Interpolation characteristic} of Lorentz spaces that
\be\label{gi}
\|u\|_{L^{p, 1}\left(\mathbb{R}^{n}\right)} \leq C\|u\|_{L^{\tilde{p}, \infty}\left(\mathbb{R}^{n}\right)}^{1-\theta}\|u\|_{L^{q, \infty}\left(\mathbb{R}^{n}\right)}^{\theta}, ~~\text{with}~~\f{1}{p }=\f{1-\theta}{\tilde{p}}+\f{\theta}{q},~~0<\theta<1,
 \ee
where we have used the fact that $\,\f{1}{\tilde{p}}=\f{1}{r}-\f{s}{n}\neq \f{1}{q}$.
This together with the Sobolev inequality \eqref{3.1} ensures that
$$
\|u\|_{L^{p, 1}\left(\mathbb{R}^{n}\right)} \leq C\|\Lambda^{s}u \|
_{L^{r ,\infty}(\mathbb{R}^{n})}^{1-\theta}\|u\|_{L^{q, \infty}\left(\mathbb{R}^{n}\right)}^{\theta}, $$
with
$$\f{1}{p }= (1-\theta)(\f{1}{r}-\f{s}{n})+\f{\theta}{q},~~0<\theta<1.$$

($I_{2}$) If $\sigma>0$,
the Sobolev  embedding \eqref{3.1} yields
\be\label{sub1}
\left\|\Lambda^{\sigma} u\right\|_{L^{r^{*},\infty}(\mathbb{R}^{n})} \leq C\left\|\Lambda^{s} u\right\|_{L^{r,\infty}(\mathbb{R}^{n})},
\ee
with
$$
\frac{1}{r^{*}}=\frac{1}{r}-\frac{s-\sigma}{n}.
$$
It follows from Proposition \ref{keypointwise} that for $1<q, r \leq \infty$,
\be\label{sub3}
\left\|\Lambda^{\sigma} u\right\|_{L^{\tilde{p},\infty}(\mathbb{R}^{n})} \leq C\|u\|_{L^{q,\infty}(\mathbb{R}^{n})}^{1-\frac{\sigma}{s}}\left\|\Lambda^{s} u\right\|_{L^{r,\infty}(\mathbb{R}^{n})}^{\frac{\sigma}{s}}
\ee
with
$$
\frac{1}{\widetilde{p}}=\left(1-\frac{\sigma}{s}\right) \frac{1}{q}+ \frac{ \sigma}{s r}.
$$
Then the hypothesis on the indices that
$$\frac{n}{p}-\sigma=\theta \frac{n}{q}+(1-\theta)\left(\frac{n}{r}-s\right),~~0 <\theta < 1-\frac{\sigma}{s},
$$
imply the following relation
$$
\frac{1}{p}= \frac{\alpha }{\widetilde{p}}+\frac{1-\alpha}{r^{*}},~~\alpha=\frac{\theta}{1-\frac{\sigma}{s}}\in (0,1).
$$
Observe that the condition $\,s-\f{n}{r} \neq \sigma-\frac{n}{p}\,$ guarantees $\,\widetilde{p}\neq r^{*}$. From the interpolation characteristic \eqref{Interpolation characteristic} of Lorentz spaces, we see that
\be\label{sub2}
 \|\Lambda^{\sigma} u \|_{L^{p,1}(\mathbb{R}^{n})} \leq C\|\Lambda^{\sigma} u \|_{L^{\tilde{p},\infty}(\mathbb{R}^{n})}^{\alpha} \|\Lambda^{\sigma} u \|_{L^{r^{\ast},\infty}(\mathbb{R}^{n})}^{1-\alpha}.
\ee
Plugging \eqref{sub1} and \eqref{sub3} into \eqref{sub2}, we get
$$
\begin{aligned}
\left\|\Lambda^{\sigma} u\right\|_{L^{p,1}(\mathbb{R}^{n})} & \leq C\left(\|u\|_{L^{q,\infty}(\mathbb{R}^{n})}^{1-\frac{\sigma}{s}}\left\|\Lambda^{s} u\right\|_{L^{r,\infty}(\mathbb{R}^{n})}^{\frac{\sigma}{s}}\right)^{\alpha}\left\|\Lambda^{s} u\right\|_{L^{r,\infty}(\mathbb{R}^{n})}^{1-\alpha} \\
&=C\|u\|_{L^{q,\infty}(\mathbb{R}^{n})}^{\left(1-\frac{\sigma}{s}\right) \alpha}\left\|\Lambda^{s} u\right\|_{L^{r,\infty}(\mathbb{R}^{n})}^{1-\alpha\left(1-\frac{\sigma}{s}\right)}\\
&=C\|u\|_{L^{q,\infty}(\mathbb{R}^{n})}^{\theta}\left\|\Lambda^{s} u\right\|_{L^{r,\infty}(\mathbb{R}^{n})}^{1-\theta}.
\end{aligned}
$$

($II$) Next, we turn our attention to the case that $s-\sigma= {n}/{r}$ in \eqref{GNL}. By virtue of  \eqref{cripre1}, we have proved  \eqref{GNL} with $s= {n}/{r}$ and $\sigma=0$. In the following, we assume that $\sigma>0$.
According to  Proposition \ref{keypointwise}, we arrive at that for $\frac{1}{\tilde{p}}=\left(1-\frac{\sigma}{s}\right) \frac{1}{q}+\frac{\sigma}{sr}=\left(1-\frac{\sigma}{s}\right)\left(\frac{1}{q}+\frac{\sigma}{n}\right)$,
\be\label{crica1}
\|\Lambda^{\sigma} u \|_{L^{\tilde{p},\infty}(\mathbb{R}^{n})}\leq
C \|u\|_{L^{q,\infty}(\mathbb{R}^{n})}^{1-\frac{\sigma}{s}}\left\|\Lambda^{s} u\right\|_{L^{r,\infty}(\mathbb{R}^{n})}^{\frac{\sigma}{s}}.
\ee
Since $\frac{1}{p}=\theta\left(\frac{1}{q}+\frac{\sigma}{n}\right)<\left(1-\frac{\sigma}{s}\right)\left(\frac{1}{q}+\frac{\sigma}{n}\right)=\frac{1}{\tilde{p}}$, it follows from \eqref{cripre1} that
\be\label{cri1}
\left\|\Lambda^{\sigma} u\right\|_{L^{p,1}(\mathbb{R}^{n})}   \leq C\left\|\Lambda^{\sigma} u\right\|_{L^{\tilde{p},\infty}(\mathbb{R}^{n})}^{\frac{\tilde{p}}{p}}\left\|\Lambda^{s} u\right\|_{L^{r,\infty}(\mathbb{R}^{n})}^{1-\frac{\tilde{p}}{p}}.
\ee
Substituting \eqref{crica1} into \eqref{cri1}, we obtain
$$
\begin{aligned}
\left\|\Lambda^{\sigma} u\right\|_{L^{p,1}(\mathbb{R}^{n})}\leq C \|u\|_{L^{q,\infty}(\mathbb{R}^{n})}^{\frac{\tilde{p}}{p}\left(1-\frac{\sigma}{s}\right)}\left\|\Lambda^{s} u\right\|_{L^{r,\infty}(\mathbb{R}^{n})}^{1-\frac{\tilde{p}}{p}\left(1-\frac{\sigma}{s}\right)}=C \|u\|_{L^{q,\infty}(\mathbb{R}^{n})}^{\theta}\left\|\Lambda^{s} u\right\|_{L^{r,\infty}(\mathbb{R}^{n})}^{1-\theta}.
\end{aligned}
$$

($III$) Finally, it is enough to show    \eqref{GNL} under the
hypothesis that $s-\sigma> {n}/{r}$.

($III_{1}$) If $\sigma>0$, we conclude from Proposition \ref{keypointwise} that for $1<q, r \leq \infty$,
\be\label{subb11}
\left\|\Lambda^{\sigma} u\right\|_{L^{\tilde{p},\infty}(\mathbb{R}^{n})} \leq C\|u\|_{L^{q,\infty}(\mathbb{R}^{n})}^{1-\frac{\sigma}{s}}\left\|\Lambda^{s} u\right\|_{L^{r,\infty}(\mathbb{R}^{n})}^{\frac{\sigma}{s}},
\ee
where
$$
\frac{1}{\widetilde{p}}=\left(1-\frac{\sigma}{s}\right) \frac{1}{q}+ \frac{ \sigma}{s r}.
$$
Observe that
\begin{align*}
\f{1}{p }=\theta \left(\f{1}{q}+\f{s}{n}-\f{1}{r}\right)+\left(\f{1}{r}-\f{s}{n}+\f{\sigma}{n}\right)
<\left(1-\frac{\sigma}{s}\right)\left(\f{1}{q}+\f{s}{n}-\f{1}{r}\right)+\left(\f{1}{r}-\f{s}{n}+\f{\sigma}{n}\right)=\frac{1}{\widetilde{p}}\,.
\end{align*}
In view of  the interpolation characteristic \eqref{Interpolation characteristic} of Lorentz spaces, we see that
\be\label{sup2}
\left\|\Lambda^{\sigma} u\right\|_{L^{p,1}(\mathbb{R}^{n})} \leq C\left\|\Lambda^{\sigma} u\right\|_{L^{\tilde{p},\infty}(\mathbb{R}^{n})}^{\alpha}\left\|\Lambda^{\sigma} u\right\|_{L^{\infty}(\mathbb{R}^{n})}^{1-\alpha}
\ee
with
$$
\frac{1}{p}=\frac{\alpha}{\widetilde{p}}
+\frac{1-\alpha}{\infty},~~0<\alpha< 1.
$$
Furthermore \eqref{suppre1} ensures that
\be\label{sup1}
\left\|\Lambda^{\sigma} u\right\|_{L^{\infty}(\mathbb{R}^{n})}\leq C\left\|\Lambda^{\sigma} u\right\|_{L^{\tilde{p},\infty}(\mathbb{R}^{n})}^{\beta}\left\|\Lambda^{s} u\right\|_{L^{r,\infty}(\mathbb{R}^{n})}^{1-\beta},
\ee
where
$$0= \frac{\beta n}{\tilde{p}}+(1-\beta)\left(\frac{n}{r}-s+\sigma\right),~~0<\beta\leq 1.$$
Inserting \eqref{sup1} into \eqref{sup2} and using \eqref{subb11}, we have
\begin{align*}
\left\|\Lambda^{\sigma} u\right\|_{L^{p,1}(\mathbb{R}^{n})} \leq
& C\|u\|_{L^{q,\infty}(\mathbb{R}^{n})}^{\left(1-\frac{\sigma}{s}\right)[\alpha+(1-\alpha) \beta]}\left\|\Lambda^{s} u\right\|_{L^{r,\infty}(\mathbb{R}^{n})}^{(1-\beta)(1-\alpha)+\frac{\sigma}{s}[\alpha+(1-\alpha) \beta]}\\
=& C\|u\|_{L^{q,\infty}(\mathbb{R}^{n})}^{\theta}\left\|\Lambda^{s} u\right\|_{L^{r,\infty}(\mathbb{R}^{n})}^{1-\theta}.
\end{align*}

($III_{2}$) If $\sigma=0$, we note that
\begin{align*}
\f{1}{p}= \f{\theta}{q}+(1-\theta)\left(\f{1}{r}-\f{s}{n}\right)<\frac{1}{q}\,.
\end{align*}
Then it follows from the interpolation characteristic \eqref{Interpolation characteristic} of Lorentz spaces that
\be\label{giWA}
\|u\|_{L^{p, 1}\left(\mathbb{R}^{n}\right)} \leq C\|u\|_{L^{q, \infty}\left(\mathbb{R}^{n}\right)}^{\tau}\|u\|_{L^{\infty}\left(\mathbb{R}^{n}\right)}^{1-\tau}, ~~\text{with}~~\f{1}{p }=\f{\tau}{q}+\f{1-\tau}{\infty}~~\text{and}~~0<\tau<1.
 \ee
In view of \eqref{suppre1}, we obtain
$$
\left\| u\right\|_{L^{\infty}(\mathbb{R}^{n})}\leq C\left\| u\right\|_{L^{q,\infty}(\mathbb{R}^{n})}^{\lambda}\left\|\Lambda^{s} u\right\|_{L^{r,\infty}(\mathbb{R}^{n})}^{1-\lambda},~~\text{with}~~0=\f{\lambda}{q}+(1-\lambda)\left(\frac{1}{r}-\frac{s}{n}\right)~~\text{and}~~0<\lambda\leq 1.
$$
This together with \eqref{giWA} yields that
\begin{align*}
\|u\|_{L^{p, 1}\left(\mathbb{R}^{n}\right)} \leq C\|u\|_{L^{q, \infty}\left(\mathbb{R}^{n}\right)}^{\tau+\lambda(1-\tau)}\left\|\Lambda^{s} u\right\|_{L^{r,\infty}(\mathbb{R}^{n})}^{(1-\lambda)(1-\tau)}
= C\|u\|_{L^{q,\infty}(\mathbb{R}^{n})}^{\theta}\left\|\Lambda^{s} u\right\|_{L^{r,\infty}(\mathbb{R}^{n})}^{1-\theta}.
\end{align*}
We complete the proof of Theorem  \ref{the1.1}.
 \end{proof}
 \subsection{Gagliardo-Nirenberg inequalities in Besov-Lorentz spaces}
 In this subsection, by means of the Littlewood-Paley decomposition and generalized Bernstein inequalities in Lemma \ref{lem2.4}, we shall prove the Gagliardo-Nirenberg inequality  \eqref{glgn} in Besov-Lorentz spaces.
  \begin{proof}[Proof of Theorem \ref{the1.2}]
First, we assert  that
$q\leq p$, $\sigma>0$ and $\f{n}{p}-\sigma=\theta\f{n}{q}+(1-\theta)(\f{n}{r}-s)$ imply that
\be\label{first}
s-\sigma-\f{n}{r}+\f{n}{p}>0,
\ee
which will be frequently used later.
Indeed, thanks to $q\leq p$ and $\sigma>0$, we see that
$$
\f{n}{p}-(1-\theta)\sigma>\f{n}{p}-\sigma
=\theta\f{n}{q}+(1-\theta)(\f{n}{r}-s)\geq\theta\f{n}{p}
+(1-\theta)(\f{n}{r}-s),
$$
 that is,
$$
s-\sigma-\f{n}{r}+\f{n}{p}>0.
$$
The assertion follows. Similarly, we also assert that $q< p$ and $\sigma=0$ yield \eqref{first}.

($I$) We next consider the case  $q=r$ of \eqref{glgn}. Note that
$$
\f1p=\f{\theta}{q}+\f{1-\theta}{r}-\f{1}{n}[(1-\theta)s-\sigma]
<\f{\theta}{q}+\f{1-\theta}{r}=\f1q.
$$
Therefore, we see that $r=q<p$ and \eqref{first} are valid.
With the help of the Bernstein inequality \eqref{bern2} in Lorentz spaces, we infer that
\be\ba\label{5.2}
\|u\|_{\dot{B}^{\sigma}_{p,1,1}}&=\sum_{j\leq k}2^{j\sigma}\|\dot{\Delta}_{j}u\|_{L^{p,1}(\mathbb{R}^{n})}+\sum_{j> k}2^{j\sigma}\|\dot{\Delta}_{j}u\|_{L^{p,1}(\mathbb{R}^{n})}\\
&\leq C\sum_{j\leq k}2^{j[\sigma+n(\f1r-\f1p)]}\|\dot{\Delta}_{j}u\|_{L^{r,\infty}(\mathbb{R}^{n})}+C\sum_{j> k}2^{-j[s-\sigma-n(\f1r-\f1p)]}2^{js}\|\dot{\Delta}_{j}u\|_{L^{r,\infty}(\mathbb{R}^{n})}\\
&\leq C\f{2^{k[\sigma+n(\f1r-\f1p)]}}{1-2^{ -[\sigma+n(\f1r-\f1p)]}} \| u\|_{\dot{B}^{0}_{r,\infty,\infty}}+C\f{ 2^{-k[s-\sigma-n(\f1r-\f1p)]}}{1-2^{- [s-\sigma-n(\f1r-\f1p)]}}  \| u\|_{\dot{B}^{s}_{r,\infty,\infty}},
\ea\ee
where we have used $\sigma+n(\f1r-\f1p)>0$ and \eqref{first}.

As a consequence, by choosing the integer $k$ appropriately such that $ 2^{k[\sigma+n(\f1r-\f1p)]}  \| u\|_{\dot{B}^{0}_{r,\infty,\infty}}\approx   2^{-k[s-\sigma-n(\f1r-\f1p)]} \| u\|_{\dot{B}^{s}_{r,\infty,\infty}}$, we further get
$$\ba
\|u\|_{\dot{B}^{\sigma}_{p,1,1}} &\leq C  \| u\|_{\dot{B}^{0}_{r,\infty,\infty}} ^{\theta} \| u\|_{\dot{B}^{s}_{r,\infty,\infty}}^{1-\theta}.
\ea$$

($II$) We turn our attention to the  case $q<r$. In order to get
 \eqref{first}, we check that $q<p$  via the following straightforward  calculation
$$\ba
\f{n}{p}&=(1-\f{\sigma}{s})\f{n}{q}+\f{\sigma}{s}\f{n}{r}+
(1-\f{\sigma}{s}-\theta)(\f{n}{r}-\f{n}{q}-s)\\
&< (1-\f{\sigma}{s})\f{n}{q}+\f{\sigma}{s}\f{n}{q}
\\&=\f{n}{q}.
\ea$$
We also see that
\be\label{second}
\f{ 1}{p}
 < (1-\f{\sigma}{s})\f{1}{q}+\f{\sigma}{s}\f{1}{r}.
\ee
The following discussion will be divided into three subcases that $q<r<p$, $ q<p< r$ and   $ q<p= r$.

($II_{1}$) We examine the case $q<r<p$.
As the derivation of \eqref{5.2}, we find that
$$\ba
\|u\|_{\dot{B}^{\sigma}_{p,1,1}}
&\leq C\sum_{j\leq k}2^{j[\sigma+n(\f1q-\f1p)]}\|\dot{\Delta}_{j}u\|_{L^{q,\infty}}+C\sum_{j> k}2^{-j[s-\sigma-n(\f1r-\f1p)]}2^{js}\|\dot{\Delta}_{j}u\|_{L^{r,\infty}}\\
&\leq C\f{2^{k[\sigma+n(\f1q-\f1p)]}}{1-2^{ -[\sigma+n(\f1q-\f1p)]}} \| u\|_{\dot{B}^{0}_{q,\infty,\infty}}+C\f{ 2^{-k[s-\sigma-n(\f1r-\f1p)]}}{1-2^{- [s-\sigma-n(\f1r-\f1p)]}}  \| u\|_{\dot{B}^{s}_{r,\infty,\infty}},
\ea$$
where we have used $\sigma+n(\f1q-\f1p)>0$ and \eqref{first}.

Hence, we conclude that
$$\ba
\|u\|_{\dot{B}^{\sigma}_{p,1,1}} &\leq C  \| u\|_{\dot{B}^{0}_{q,\infty,\infty}} ^{\theta} \| u\|_{\dot{B}^{s}_{r,\infty,\infty}}^{1-\theta}.
\ea$$

($II_{2}$) We deal with the case $ q<p< r$.
By means of  the interpolation characteristic \eqref{Interpolation characteristic} of Lorentz spaces, we
observe that
$$
\|\dot{\Delta}_{j}u\|_{L^{p,1}(\mathbb{R}^{n})}\leq C
\|\dot{\Delta}_{j}u\|^{\alpha}_{L^{q,\infty}(\mathbb{R}^{n})}
\|\dot{\Delta}_{j}u\|^{1-\alpha}_{L^{r,\infty}(\mathbb{R}^{n})},
~~\f{1}{p}=\f{\alpha}{q}+\f{1-\alpha}{r}.
$$
Combining this, the Bernstein inequality \eqref{bern2} in Lorentz spaces and \eqref{first}, we know that
\be\label{3.17}\ba
\|u\|_{\dot{B}^{\sigma}_{p,1,1}}&=\sum_{j\leq k}2^{j\sigma}\|\dot{\Delta}_{j}u\|_{L^{p,1}(\mathbb{R}^{n})}+\sum_{j> k}2^{j\sigma}\|\dot{\Delta}_{j}u\|_{L^{p,1}(\mathbb{R}^{n})}\\
&\leq C\sum_{j\leq k}2^{j[\sigma+n(\f1q-\f1p)]}\|\dot{\Delta}_{j}u\|_{L^{q,\infty}(\mathbb{R}^{n})}+C\sum_{j> k}2^{j\sigma}\|\dot{\Delta}_{j}u\|^{\alpha}_{L^{q,\infty}(\mathbb{R}^{n})}
\|\dot{\Delta}_{j}u\|^{1-\alpha}_{L^{r,\infty}(\mathbb{R}^{n})}\\
&\leq C\f{2^{k[\sigma+n(\f1q-\f1p)]}}{1-2^{- [\sigma+n(\f1q-\f1p)]}} \| u\|_{\dot{B}^{0}_{q,\infty,\infty}}+C\f{ 2^{-k[s(1-\alpha)-\sigma ]}}{1-2^{- [s(1-\alpha)-\sigma ]}} \| u\|_{\dot{B}^{0}_{q,\infty,\infty}}^{\alpha} \| u\|_{\dot{B}^{s}_{r,\infty,\infty}}^{1-\alpha},
\ea\ee
where we have used $\sigma+n(\f1q-\f1p)>0$ and $s(1-\alpha)-\sigma>0$ which is derived
from \eqref{second}. Choosing the integer $k$ such that $ 2^{k[\sigma+n(\f1q-\f1p)]}\| u\|_{\dot{B}^{0}_{q,\infty,\infty}}\approx
2^{-k[s(1-\alpha)-\sigma ]}\| u\|_{\dot{B}^{0}_{q,\infty,\infty}}^{\alpha} \| u\|_{\dot{B}^{s}_{r,\infty,\infty}}^{1-\alpha}$,  we also have
$$\ba
\|u\|_{\dot{B}^{\sigma}_{p,1,1}} &\leq C  \| u\|_{\dot{B}^{0}_{q,\infty,\infty}} ^{\theta} \| u\|_{\dot{B}^{s}_{r,\infty,\infty}}^{1-\theta}.
\ea$$

($II_{3}$) We treat the case $ q<p= r$.
It follows from the interpolation characteristic \eqref{Interpolation characteristic} of Lorentz spaces that
\be\label{5191}
\|\dot{\Delta}_{j}u\|_{L^{p,1}(\mathbb{R}^{n})}\leq C
\|\dot{\Delta}_{j}u\|^{1-\alpha}_{L^{q,\infty}(\mathbb{R}^{n})}
\|\dot{\Delta}_{j}u\|^{\alpha}_{L^{(1+\varepsilon)p,\infty}(\mathbb{R}^{n})},
~~\f{1}{p}=\f{1-\alpha}{q}+\f{\alpha}{(1+\varepsilon)p},
\ee
where $\varepsilon>0$ will be determined later.
We derive from  this,  the Bernstein inequality \eqref{bern2} in Lorentz spaces and \eqref{first} that
\be\label{3.171}\ba
\|u\|_{\dot{B}^{\sigma}_{p,1,1}}&=\sum_{j\leq k}2^{j\sigma}\|\dot{\Delta}_{j}u\|_{L^{p,1}(\mathbb{R}^{n})}+\sum_{j> k}2^{j\sigma}\|\dot{\Delta}_{j}u\|_{L^{p,1}(\mathbb{R}^{n})}\\
&\leq C\sum_{j\leq k}2^{j[\sigma+n(\f1q-\f1p)]}\|\dot{\Delta}_{j}u\|_{L^{q,\infty}(\mathbb{R}^{n})}+C\sum_{j> k}2^{j\sigma}\|\dot{\Delta}_{j}u\|^{1-\alpha}_{L^{q,\infty}(\mathbb{R}^{n})}
\|\dot{\Delta}_{j}u\|^{\alpha}_{L^{(1+\varepsilon)p,\infty}(\mathbb{R}^{n})}\\
&\leq C\f{2^{k[\sigma+n(\f1q-\f1p)]}}{1-2^{ -[\sigma+n(\f1q-\f1p)]}} \| u\|_{\dot{B}^{0}_{q,\infty,\infty}}+ C\sum_{j> k}2^{-j[s\alpha- \f{n\varepsilon\alpha}{(1+\varepsilon)p}-\sigma]} \| u\|_{\dot{B}^{0}_{q,\infty,\infty}}^{1-\alpha} \| u\|_{\dot{B}^{s}_{r,\infty,\infty}}^{\alpha},
\ea\ee
where we have used the fact that $\sigma+n(\f1q-\f1p)>0$. \\
Denote $\delta(\varepsilon)=s\alpha- \f{n\varepsilon\alpha}{(1+\varepsilon)p}-\sigma$. From \eqref{5191}, we see that
 $$\delta(\varepsilon)=\f{ps(1+\varepsilon)(p-q)-\sigma p[(1+\varepsilon)p-q]-\varepsilon n(p-q)}{p[(1+\varepsilon)p-q]},$$
 and $\delta(\varepsilon)$ is a continuous function on a neighborhood of $0$. Since $\delta(0)>0$, there exists a sufficiently small $\varepsilon>0$ such that $\delta(\varepsilon)>0$.
 It follows from \eqref{3.171} that
 \be\ba
\|u\|_{\dot{B}^{\sigma}_{p,1,1}}\leq C\f{2^{k[\sigma+n(\f1q-\f1p)]}}{1-2^{- [\sigma+n(\f1q-\f1p)]}} \| u\|_{\dot{B}^{0}_{q,\infty,\infty}}+ C\f{2^{-k[s\alpha- \f{n\varepsilon\alpha}{(1+\varepsilon)p}-\sigma]}}{1-2^{-[s\alpha- \f{n\varepsilon\alpha}{(1+\varepsilon)p}-\sigma]}}  \| u\|_{\dot{B}^{0}_{q,\infty,\infty}}^{1-\alpha} \| u\|_{\dot{B}^{s}_{r,\infty,\infty}}^{\alpha},
\ea\ee
 which also yields that
$$\ba
\|u\|_{\dot{B}^{\sigma}_{p,1,1}} &\leq C  \| u\|_{\dot{B}^{0}_{q,\infty,\infty}} ^{\theta} \| u\|_{\dot{B}^{s}_{r,\infty,\infty}}^{1-\theta}.
\ea$$

($III$) Finally, it remains to show  \eqref{glgn} under the case that $q>r$.

($III_{1}$) We  first consider \eqref{glgn} under  the  hypothesis that $r<q\leq p$.  We divide this case into two subcases that $r<q <p$ and $r<q =p$.

($III_{11}$)  We handle with the   case  $r<q <p$.
In view of the Bernstein inequality \eqref{bern2}, we see that
$$\ba
\|u\|_{\dot{B}^{\sigma}_{p,1,1}}
&\leq C\sum_{j\leq k}2^{j[\sigma+n(\f1q-\f1p)]}\|\dot{\Delta}_{j}u\|_{L^{q,\infty}(\mathbb{R}^{n})}+C\sum_{j> k}2^{-j[s-\sigma-n(\f1r-\f1p)]}2^{js}\|\dot{\Delta}_{j}u\|_{L^{r,\infty}(\mathbb{R}^{n})}\\
&\leq C\f{2^{k[\sigma+n(\f1q-\f1p)]}}{1-2^{ -[\sigma+n(\f1q-\f1p)]}} \| u\|_{\dot{B}^{0}_{q,\infty,\infty}}+C\f{ 2^{-k[s-\sigma-n(\f1r-\f1p)]}}{1-2^{- [s-\sigma-n(\f1r-\f1p)]}}  \| u\|_{\dot{B}^{s}_{r,\infty,\infty}},
\ea$$
where we have used $\sigma+n(\f1q-\f1p)>0$ and \eqref{first}.

Therefore, we conclude that
$$\ba
\|u\|_{\dot{B}^{\sigma}_{p,1,1}} &\leq C  \| u\|_{\dot{B}^{0}_{q,\infty,\infty}} ^{\theta} \| u\|_{\dot{B}^{s}_{r,\infty,\infty}}^{1-\theta}.
\ea$$

($III_{12}$) We need to show  \eqref{glgn} under  the  hypothesis  that $r<q=p$. Observe that the condition $\,s-\f{n}{r} \neq \sigma-\frac{n}{p}\,$ implies that $\sigma>0 $ in this case. In the same manner as   \eqref{5191}, we see that
\be
\|\dot{\Delta}_{j}u\|_{L^{p,1}(\mathbb{R}^{n})}\leq C
\|\dot{\Delta}_{j}u\|^{1-\alpha}_{L^{r,\infty}(\mathbb{R}^{n})}
\|\dot{\Delta}_{j}u\|^{\alpha}_{L^{(1+\varepsilon)p,\infty}(\mathbb{R}^{n})},
~~\f{1}{p}=\f{1-\alpha}{r}+\f{\alpha}{(1+\varepsilon)p},
\ee
where $\varepsilon>0$ will be determined later. Then we obtain
$$\ba
\|u\|_{\dot{B}^{\sigma}_{p,1,1}}
&\leq C\sum_{j\leq k}2^{j\sigma}\|\dot{\Delta}_{j}u\|^{1-\alpha}_{L^{r,\infty}(\mathbb{R}^{n})}
\|\dot{\Delta}_{j}u\|^{\alpha}_{L^{(1+\varepsilon)p,\infty}(\mathbb{R}^{n})}+C\sum_{j> k}2^{-j[s-\sigma-n(\f1r-\f1p)]}2^{js}\|\dot{\Delta}_{j}u\|_{L^{r,\infty}(\mathbb{R}^{n})}\\
&\leq  C\sum_{j\leq k}2^{j[\sigma+\f{n\varepsilon\alpha}{p(1+\varepsilon)}-s(1-\alpha)]}  \| u\|_{\dot{B}^{0}_{q,\infty,\infty}}^{\alpha} \| u\|_{\dot{B}^{s}_{r,\infty,\infty}}^{1-\alpha}+C\f{ 2^{-k[s-\sigma-n(\f1r-\f1p)]}}{1-2^{- [s-\sigma-n(\f1r-\f1p)]}}  \| u\|_{\dot{B}^{s}_{r,\infty,\infty}},
\ea$$
As the arguments in ($II_{3}$), we can choose $\varepsilon>0$ sufficiently small to ensure that
$\sigma+\f{n\varepsilon\alpha}{p(1+\varepsilon)}-s(1-\alpha)>0$. This yields the desired inequality \eqref{glgn}. We omit the details.

($III_{2}$) Let $r<q$ and $ p<q$.
It is clear that
\be\label{wwkey}
\f1p=\B(\f{1-\f{\sigma}{s}-\theta}{1-\f{\sigma}{s}}\B)
\B(\f1r-\f{s-\sigma}{n}\B)+
\B(\f{ \theta}{1-\f{\sigma}{s}}\B)\B((1-\f{\sigma}{s})\f1q+\f{\sigma}{sr}\B),
\ee
which yields that $r<p$ in this case. Additionally, the condition $\,s-\f{n}{r} \neq \sigma-\frac{n}{p}\,$ guarantees that $\f{ 1}{p}
 \neq(1-\f{\sigma}{s})\f{1}{q}+\f{\sigma}{s}\f{1}{r}$.

($III_{21}$) We assume that $r<q,$ $p<q $ and $\f{ 1}{p}
 < (1-\f{\sigma}{s})\f{1}{q}+\f{\sigma}{s}\f{1}{r}$. Then \eqref{first} follows from \eqref{wwkey}.
Note that $r<p<q$, a slight modification of the proof of \eqref{3.17} together with \eqref{first} implies that
$$\ba
\|u\|_{\dot{B}^{\sigma}_{p,1,1}}&=\sum_{j\leq k}2^{j\sigma}\|\dot{\Delta}_{j}u\|_{L^{p,1}(\mathbb{R}^{n})}+\sum_{j> k}2^{j\sigma}\|\dot{\Delta}_{j}u\|_{L^{p,1}(\mathbb{R}^{n})}\\
&\leq C\sum_{j\leq k}2^{j\sigma}\|\dot{\Delta}_{j}u
\|^{\alpha}_{L^{q,\infty}(\mathbb{R}^{n})}
\|\dot{\Delta}_{j}u\|^{1-\alpha}_{L^{r,\infty}(\mathbb{R}^{n})} +C\sum_{j> k}2^{-j[s-\sigma-n(\f1r-\f1p)]}2^{js}\|\dot{\Delta}_{j}u
\|_{L^{r,\infty}(\mathbb{R}^{n})}\\
&\leq C\f{ 2^{ k[\sigma-s(1-\alpha) ]}}{1-2^{-[\sigma-s(1-\alpha) ]}} \| u\|_{\dot{B}^{0}_{q,\infty,\infty}}^{\alpha} \| u\|_{\dot{B}^{s}_{r,\infty,\infty}}^{1-\alpha}+C\f{ 2^{-k[s-\sigma-n(\f1r-\f1p)]}}{1-2^{- [s-\sigma-n(\f1r-\f1p)]}}  \| u\|_{\dot{B}^{s}_{r,\infty,\infty} }.
\ea$$
Here we need the fact that $\sigma-s(1-\alpha)>0$ with $1/p=\alpha/q+(1-\alpha)/r$, which is derived from the hypothesis that $\f{ 1}{p}
 < (1-\f{\sigma}{s})\f{1}{q}+\f{\sigma}{s}\f{1}{r}$.
Therefore, we obtain the desired estimate
 $$\ba
\|u\|_{\dot{B}^{\sigma}_{p,1,1}} &\leq C \| u\|_{\dot{B}^{0}_{q,\infty,\infty}} ^{\theta} \| u\|_{\dot{B}^{s}_{r,\infty,\infty}}^{1-\theta}.
\ea$$

($III_{22}$) Now, it remains to show \eqref{glgn} with $r<q $ and $\f{ 1}{p}
 > (1-\f{\sigma}{s})\f{1}{q}+\f{\sigma}{s}\f{1}{r}$,
 which imply that
 \be\label{5.5}
 p<q,~~s(1-\alpha)-\sigma >0~~\text{and}~~\f1p=\f{\alpha}{q}+\f{1-\alpha}{r}.
 \ee
 It follows from \eqref{wwkey} and $\f{ 1}{p}
 > (1-\f{\sigma}{s})\f{1}{q}+\f{\sigma}{s}\f{1}{r}$ that
$$
\f{1}{p}<\f1r-\f{s-\sigma}{n}<\f{1}{r},
$$
which together with $p<q$ enables us to derive that
\be\label{5.6}
\sigma-s+n(\f1r-\f1p)>0~~\text{and}~~0<\alpha<1.
\ee
Making use of the Bernstein inequality \eqref{bern2} for Lorentz spaces, \eqref{Interpolation characteristic}, \eqref{5.5} and \eqref{5.6}, we arrive at
 $$\ba
\|u\|_{\dot{B}^{\sigma}_{p,1,1}}&=\sum_{j\leq k}2^{j\sigma}\|\dot{\Delta}_{j}u\|_{L^{p,1}(\mathbb{R}^{n})}+\sum_{j> k}2^{j\sigma}\|\dot{\Delta}_{j}u\|_{L^{p,1}(\mathbb{R}^{n})}\\
&\leq C\sum_{j\leq k} 2^{ j[\sigma-s+n(\f1r-\f1p)]}2^{js}\|\dot{\Delta}_{j}u\|_{L^{r,\infty}(\mathbb{R}^{n})}+
C\sum_{j> k}2^{j\sigma}\|\dot{\Delta}_{j}u
\|^{\alpha}_{L^{q,\infty}(\mathbb{R}^{n})}\|\dot{\Delta}_{j}u
\|^{1-\alpha}_{L^{r,\infty}(\mathbb{R}^{n})}
\\
&\leq C\f{ 2^{ k[   \sigma-s+n(\f1r-\f1p)]}}{1- 2^{- [\sigma -s+n(\f1r-\f1p)]}}  \| u\|_{\dot{B}^{s}_{r,\infty,\infty}} +C\f{ 2^{-k[s(1-\alpha)-\sigma ]}}{1-2^{- [s(1-\alpha)-\sigma ]}} \| u\|_{\dot{B}^{0}_{q,\infty,\infty}}^{\alpha} \| u\|_{\dot{B}^{s}_{r,\infty,\infty}}^{1-\alpha}.
\ea$$
We thereby deduce the inequality
 $$\ba
\|u\|_{\dot{B}^{\sigma}_{p,1,1}} &\leq C  \| u\|_{\dot{B}^{0}_{q,\infty,\infty}} ^{\theta} \| u\|_{\dot{B}^{s}_{r,\infty,\infty}}^{1-\theta}.
\ea$$
The proof of this theorem is completed.
\end{proof}

 \section{Proof of Theorem \ref{the1.4}}
This section is concerned with the application of Gagliardo-Nirenberg inequalities in Lorentz type spaces to the energy conservation
of 3D Navier-Stokes equations. We shall follow the path of \cite{[CL]} to prove \eqref{EIL1}.
\begin{proof}[Proof of Theorem \ref{the1.4}]
As in \cite{[CCFS],[CL]}, since Leray-Hopf weak solutions $v$ satisfy \eqref{NS} in the sense of distributions, there holds for any $Q \in\mathbb{Z},$
$$
\f12\|S_{Q}v(T)\|_{L^{2}(\mathbb{R}^{3})}^{2}+\int_{0}^{T}\|\nabla S_{Q}v\|_{L^{2}(\mathbb{R}^{3})}^{2}ds =\f12\|S_{Q}v_{0}\|_{L^{2}(\mathbb{R}^{3})}^{2}+\int_{0}^{T}\int\text{Tr}(S_{Q}(v\otimes v)\cdot\nabla S_{Q}v)dxds.
$$
In order to get the energy equality, it is enough to show $\int_{0}^{T}\int\text{Tr}(S_{Q}(v\otimes v)\cdot\nabla S_{Q}v)dxds\rightarrow0$ as $Q\rightarrow\infty$. To this end, we recall the following estimates proved in \cite{[CCFS],[CL]}
\be\ba\label{CCFSCL}
&\int_{0}^{T}\int|\text{Tr}(S_{Q}(v\otimes v)\cdot\nabla S_{Q}v)|dxds \\
\leq & C\int_{0}^{T} \left[\sum_{k<Q} 2^{\frac{2k}{3}}\left\|\Delta_{k}v\right\|_{L^{3}(\mathbb{R}^{3})}^{2} 2^{-\frac{4|r-Q|}{3}}\right]^{\frac{3}{2}} ds+C\int_{0}^{T} \left[\sum_{k \geq Q} 2^{\frac{2k}{3}}\left\|\Delta_{k}v\right\|_{L^{3}(\mathbb{R}^{3})}^{2} 2^{-\frac{2|k-Q|}{3}}\right]^{\frac{3}{2}} ds\\
\leq & C\int^{T}_{0}\sum_{k}2^{k}\|\Delta_{k}v\|^{3}_{L^{3}(\mathbb{R}^{3})}
2^{-\f{2|k-Q|}{3}}ds.
\ea\ee

(1)
In view of the interpolation inequality \eqref{Interpolation characteristic} and Young inequality \eqref{YoungGen 2} for Lorentz spaces, we know that
\be\ba\label{5181}
\|\Delta_{k}v\|_{L^{3}(\mathbb{R}^{3})}\leq& C \|\Delta_{k}v\|^{\f{1}{3}}_{L^{2}(\mathbb{R}^{3})}
\|\Delta_{k}v\|^{\f{2}{3}}_{L^{4,\infty}(\mathbb{R}^{3})}\\
\leq& C \|\Delta_{k}v\|^{\f{1}{3}}_{L^{2}(\mathbb{R}^{3})}
\| v\|^{\f{2}{3}}_{L^{4,\infty}(\mathbb{R}^{3})}.
\ea\ee
Then we may derive from  \eqref{5181} that
\be\ba\label{215.2451last}
&\int_{0}^{T}\sum_{k}2^{k}\|\Delta_{k}v\|^{3}_{L^{3}(\mathbb{R}^{3})}
2^{-\f{2|k-Q|}{3}}ds\\\leq &
C\int_{0}^{T}\sum_{k}2^{-\f{2|k-Q|}{3}}
 2^{k}\|\Delta_{k}v\|_{L^{2}(\mathbb{R}^{3})}
\| v\|^{2}_{L^{4,\infty}(\mathbb{R}^{3})}ds \\
\leq &
C\sum_{k}2^{-\f{2|k-Q|}{3}}\B(\int_{0}^{T}2^{ 2k}
 \|\Delta_{k}v\|^{2}_{L^{2}(\mathbb{R}^{3})}ds\B)^{\f{1 }{2 }} \B(\int_{0}^{T}
\| v\|^{4}_{L^{4,\infty}(\mathbb{R}^{3})}ds\B)^{\f{1}{2 }}.
\ea\ee
Next, we show   $\sum_{k}2^{-\f{2|k-Q|}{3}}\B(\int_{0}^{T}2^{ 2 k  }
 \|\Delta_{k}v\|^{2}_{L^{2}(\mathbb{R}^{3})}ds\B)^{\f{1}{2}}\rightarrow 0$ as $Q\rightarrow\infty$. Indeed, since the energy inequality for Leray-Hopf weak solutions guarantees that $v\in L^{\infty}(0,T;L^2(\mathbb{R}^{3}))$ and $\nabla v\in L^{2}(0,T;L^2(\mathbb{R}^{3})),$
 it follows from Parseval's identity of the Fourier transform and the fact that $\varphi\in C^{\infty}(\mathbb{R}^{3})$ is supported in the shell $\{\xi\in\mathbb{R}^{3}: \frac{3}{4}\leq |\xi|\leq \frac{8}{3} \}$ that
\begin{align*}
\sum_{k=0}^{\infty} \int_{0}^{T}2^{ 2 k  }
 \|\Delta_{k}v\|^{2}_{L^{2}(\mathbb{R}^{3})}ds=&\sum_{k=0}^{\infty} \int_{0}^{T}
 \B\|\B[\B|\frac{\cdot}{2^{k}}\B|^{-1}\varphi\B(\frac{\cdot}{2^{k}}\B)\B]\mathcal{F}\B(|\nabla v|\B)\B\|^{2}_{L^{2}(\mathbb{R}^{3})}ds\\
 \leq & \,\frac{16}{9}\sum_{k=0}^{\infty} \int_{0}^{T}
 \B\|\varphi\B(\frac{\cdot}{2^{k}}\B)\mathcal{F}\B(|\nabla v|\B)\B\|^{2}_{L^{2}(\mathbb{R}^{3})}ds\\
 \leq & \,\frac{16}{9}\int_{0}^{T}\int_{\mathbb{R}^{3}}\B(\sum_{k=0}^{\infty}\varphi\B(2^{-k}\xi\B)\B)\B|\mathcal{F}\B(|\nabla v|\B)(\xi)\B|^{2}d\xi ds\\
 \leq & \,\frac{16}{9}\int_{0}^{T}
 \B\|\mathcal{F}\B(|\nabla v|\B)\B\|^{2}_{L^{2}(\mathbb{R}^{3})}ds=\frac{16}{9}\int_{0}^{T}
 \|\nabla v\|^{2}_{L^{2}(\mathbb{R}^{3})}ds<\infty,
\end{align*}
which together with the classical H\"older inequality yields that
\be\label{unkown121}\ba
 &\sum_{k>\f{Q}{2}}2^{-\f{2|k-Q|}{3}}\B(\int_{0}^{T}2^{ 2 k  }\|\Delta_{k}v\|^{2}_{L^{2}(\mathbb{R}^{3})}ds\B)^{\f{1}{2}}\\
 \leq &\B(\sum_{k>\f{Q}{2}}2^{-\f{4|k-Q|}{3}}\B)^{\f{1}{2}} \B(\sum_{k>\f{Q}{2}}\int_{0}^{T}2^{ 2 k  }
 \|\Delta_{k}v\|^{2}_{L^{2}(\mathbb{R}^{3})}ds\B)^{\f{1}{2}}\\
 \leq &\B(2\sum_{k=Q}^{\infty}2^{-\f{4|k-Q|}{3}}\B)^{\f{1}{2}} \B(\sum_{k>\f{Q}{2}}\int_{0}^{T}2^{ 2 k  }
 \|\Delta_{k}v\|^{2}_{L^{2}(\mathbb{R}^{3})}ds\B)^{\f{1}{2}}\\
 \leq & \,2\B(\sum_{k>\f{Q}{2}}\int_{0}^{T}2^{ 2 k  }
 \|\Delta_{k}v\|^{2}_{L^{2}(\mathbb{R}^{3})}ds\B)^{\f{1}{2}}\rightarrow 0,~~\text{as}~~Q\rightarrow\infty.
\ea\ee
On the other hand, we observe that
\begin{align}\label{unkown122}
 &\sum_{k\leq \f{Q}{2}}2^{-\f{2|k-Q|}{3}}\B(\int_{0}^{T}2^{ 2 k  }\|\Delta_{k}v\|^{2}_{L^{2}(\mathbb{R}^{3})}ds\B)^{\f{1}{2}}\notag\\
 \leq &\B(\f12\B)^{\f53+\f{2Q}{3}}\B(\int_{0}^{T}
 \|\Delta_{-1}v\|^{2}_{L^{2}(\mathbb{R}^{3})}ds\B)^{\f{1}{2}}+\f{4}{3}\sum_{0\leq k\leq\f{Q}{2}}2^{-\f{2|k-Q|}{3}}\B(\int_{0}^{T}
 \|\nabla v\|^{2}_{L^{2}(\mathbb{R}^{3})}ds\B)^{\f{1}{2}} \\
 \leq &\B(\f12\B)^{\f53+\f{2Q}{3}}\B(\int_{0}^{T}
 \|v\|^{2}_{L^{2}(\mathbb{R}^{3})}ds\B)^{\f{1}{2}}
 +\B(\f12\B)^{\f{Q}{3}-\f{8}{3}}\B(\int_{0}^{T}
 \|\nabla v\|^{2}_{L^{2}(\mathbb{R}^{3})}ds\B)^{\f{1}{2}}\rightarrow 0,~~\text{as}~~Q\rightarrow\infty.\notag
\end{align}
Here we have used the fact that
$$\int_{0}^{T}\|\Delta_{-1}v\|^{2}_{L^{2}(\mathbb{R}^{3})}ds=\int_{0}^{T}\|\varrho\,\hat{v}\|^{2}_{L^{2}(\mathbb{R}^{3})}ds\leq \int_{0}^{T}\|\hat{v}\|^{2}_{L^{2}(\mathbb{R}^{3})}ds=\int_{0}^{T}\|v\|^{2}_{L^{2}(\mathbb{R}^{3})}ds<\infty.$$
Combining \eqref{unkown121} and \eqref{unkown122}, we thereby conclude that
\be\label{unkown1}\ba
\sum_{k}2^{-\f{2|k-Q|}{3}}\B(\int_{0}^{T}2^{ 2 k  }
 \|\Delta_{k}v\|^{2}_{L^{2}(\mathbb{R}^{3})}ds\B)^{\f{1}{2}}\rightarrow 0,~~\text{as}~~Q\rightarrow\infty.
 \ea\ee

(2)
Let $q>4$. Before going further, we write
$$
I_{Q}=:\{s\in[0,T]:\|v(s)\|_{L^{q,\infty}(\mathbb{R}^{3})}> 2^{\f{Q(q-2)}{q}}\}
$$
 and $I^{c}_{Q}=:[0,T]\setminus I_{Q}$.

According to the interpolation inequality \eqref{Interpolation characteristic} and \eqref{YoungGen 2}, we infer that
\be\ba
\|\Delta_{k}v\|_{L^{3}(\mathbb{R}^{3})}\leq& C \|\Delta_{k}v\|^{\f{2q-6}{3(q-2)}}_{L^{2}(\mathbb{R}^{3})}
\|\Delta_{k}v\|^{\f{q}{3(q-2)}}_{L^{q,\infty}(\mathbb{R}^{3})}\\
\leq& C \|\Delta_{k}v\|^{\f{2q-6}{3(q-2)}}_{L^{2}(\mathbb{R}^{3})}
\| v\|^{\f{q}{3(q-2)}}_{L^{q,\infty}(\mathbb{R}^{3})}.
\ea\ee
Plugging this into \eqref{CCFSCL} and using the H\"older inequality, we observe that
\be\ba\label{215.245}
&\int_{I_{Q}}\sum_{k}2^{k}\|\Delta_{k}v\|^{3}_{L^{3}(\mathbb{R}^{3})}
2^{-\f{2|k-Q|}{3}}ds\\\leq &
C\int_{I_{Q}}\sum_{k}2^{k}2^{-\f{2|k-Q|}{3}}
 \|\Delta_{k}v\|^{\f{2q-6}{ q-2}}_{L^{2}(\mathbb{R}^{3})}
\| v\|^{\f{q}{ q-2}}_{L^{q,\infty}(\mathbb{R}^{3})}ds\\
\leq &
C\sum_{k}\sup_{t\in [0,T]} \|\Delta_{k}v\|^{\f{4q-14}{3(q-2)}}_{L^{2}(\mathbb{R}^{3})} \int_{I_{Q}}2^{-\f{2|k-Q|}{3}}2^{\f{2 k}{3}}
 \|\Delta_{k}v\|^{\f{2 }{3 }}_{L^{2}(\mathbb{R}^{3})}
2^{\f{ k}{3}}\| v\|^{\f{q}{ q-2}}_{L^{q,\infty}(\mathbb{R}^{3})}ds\\
\leq &
C\sup_{t\in [0,T]} \| v\|^{\f{4q-14}{3(q-2)}}_{L^{2}(\mathbb{R}^{3})}   \sum_{k}2^{-\f{2|k-Q|}{3}}\B(\int_{I_{Q}}2^{ 2 k  }
 \|\Delta_{k}v\|^{2}_{L^{2}(\mathbb{R}^{3})}ds\B)^{\f13}2^{\f{ k}{3}}\B(\int_{I_{Q}}
\| v\|^{\f{3 q}{2(q-2)}}_{L^{q,\infty}(\mathbb{R}^{3})}ds\B)^{\f23}.
\ea\ee
The  hypothesis \eqref{EIL1} enables us to obtain
\be\label{215.2}
f_{\ast}(\lambda)=|\{s\in[0,T]:\|v(s)\|_{L^{q,\infty}(\mathbb{R}^{3})}>\lambda\}|\leq  C\lambda^{-\f{2q}{q-2}}.
\ee
This yields that
\be\ba\label{5.4844}
2^{\f{ k}{3}}\B(\int_{I_{Q}}
\| v\|^{\f{3q}{2(q-2)}}_{L^{q,\infty}(\mathbb{R}^{3})}ds\B)^{\f23}=&2^{\f{ k}{3}}\B(\f{3q}{2q-4}\B)^{\f23}\B(\int^{\infty}_{2^{\f{Q(q-2)}{q}}}
\lambda^{\f{3q}{2(q-2)}-1} f_{\ast}(\lambda)  d\lambda+\int^{2^{\f{Q(q-2)}{q}}}_{0}
\lambda^{\f{3q}{2(q-2)}-1} |I_{Q}| d\lambda\B)^{\f23}\\
\leq&C2^{\f{ k}{3}}\B( \int^{\infty}_{2^{\f{Q(q-2)}{q}}}
\lambda^{\f{3q}{2(q-2)}-1-\f{2q}{q-2}}  d\lambda+2^{\f{3Q}{2}}f_{\ast}(2^{\f{Q(q-2)}{q}})\B)^{\f23}\\
\leq&C2^{\f{ k}{3}}\B( \int^{\infty}_{2^{\f{Q(q-2)}{q}}}
\lambda^{-\f{q}{2(q-2)}-1}  d\lambda+2^{-\f{Q}{2}}\B)^{\f23}\\
\leq &C 2^{\f{ k-Q}{3}}.
\ea\ee
Inserting the latter inequality into \eqref{215.245}, we get
\be\ba
&\int_{I_{Q}}\sum_{k}2^{k}\|\Delta_{k}v\|^{3}_{L^{3}(\mathbb{R}^{3})}
2^{-\f{2|k-Q|}{3}}ds\\ \leq &
C(\| v_{0}\| _{L^{2}(\mathbb{R}^{3})} ) \sum_{k}2^{-\f{|k-Q|}{3}}\B(\int_{0}^{T}2^{ 2 k  }
 \|\Delta_{k}v\|^{2}_{L^{2}(\mathbb{R}^{3})}ds\B)^{\f{1}{3}}.
\ea\ee
In the same manner as derivation of \eqref{unkown1}, we arrive at
$$\ba
\sum_{k}2^{-\f{|k-Q|}{3}}\B(\int_{0}^{T}2^{ 2 k  }
 \|\Delta_{k}v\|^{2}_{L^{2}(\mathbb{R}^{3})}ds\B)^{\f{1}{3}}\rightarrow 0,~~\text{as}~~Q\rightarrow\infty.
 \ea$$
Now, it suffices to show  $\int_{I^{c}_{Q}}\sum_{k}2^{k}\|\Delta_{k}v\|^{3}_{L^{3}(\mathbb{R}^{3})}
2^{-\f{2|k-Q|}{3}}ds\rightarrow0\,$ as $\,Q\rightarrow\infty$.\\
A slight modification of deduction of \eqref{215.245} ensures that for any $\varepsilon\in (0,\,q-4],$
 \be\ba\label{215.2451}
&\int_{I^{c}_{Q}}\sum_{k}2^{k}\|\Delta_{k}v\|^{3}_{L^{3}(\mathbb{R}^{3})}
2^{-\f{2|k-Q|}{3}}ds\\\leq &
C\int_{I^{c}_{Q}}\sum_{k}2^{k}2^{-\f{2|k-Q|}{3}}
 \|\Delta_{k}v\|^{\f{2q-6}{ q-2 }}_{L^{2}(\mathbb{R}^{3})}
\| v\|^{\f{q}{ q-2 }}_{L^{q,\infty}(\mathbb{R}^{3})}ds\\
\leq &
C\sum_{k} \sup_{t\in [0,T]} \|\Delta_{k}v\|^{\f{ 2q-8-2\varepsilon }{(2+\varepsilon)(q-2)}}_{L^{2}(\mathbb{R}^{3})}   \int_{I^{c}_{Q}}2^{-\f{2|k-Q|}{3}}2^{\f{2(1+\varepsilon) k}{2+\varepsilon}}
 \|\Delta_{k}v\|^{\f{2(1+\varepsilon) }{2+\varepsilon}}_{L^{2}(\mathbb{R}^{3})}
2^{ -\f{\varepsilon k}{2+\varepsilon}}\| v\|^{\f{q}{ q-2 }}_{L^{q,\infty}(\mathbb{R}^{3})}ds\\
\leq &
C\sup_{t\in [0,T]} \| v\|^{\f{ 2q-8-2\varepsilon }{(2+\varepsilon)(q-2)}}_{L^{2}(\mathbb{R}^{3})}   \sum_{k}2^{-\f{2|k-Q|}{3}}\B(\int_{I_{Q}^{c}}2^{ 2k}
 \|\Delta_{k}v\|^{2}_{L^{2}(\mathbb{R}^{3})}ds\B)^{\f{1+\varepsilon}{2+\varepsilon}}2^{ -\f{\varepsilon k}{2+\varepsilon}}\B(\int_{I_{Q}^{c}}
\| v\|^{\f{q(2+\varepsilon)} {q-2}}_{L^{q,\infty}(\mathbb{R}^{3})}ds\B)^{\f{1}{2+\varepsilon}}.
\ea\ee
Then it follows from \eqref{215.2} that
$$\ba
2^{ -\f{\varepsilon k}{2+\varepsilon}}\B(\int_{I_{Q}^{c}}
\| v\|^{\f{q(2+\varepsilon)}{q-2}}_{L^{q,\infty}(\mathbb{R}^{3})}ds\B)^{\f{1}{2+\varepsilon}}
\leq &2^{ -\f{\varepsilon k}{2+\varepsilon}}\B(\f{q(2+\varepsilon)}{q-2}\int_{0}^{2^{\f{Q(q-2)}{q}}}
\lambda^{\f{q(2+\varepsilon)}{q-2}-1} f_{\ast}(\lambda)  d\lambda\B)^{\f{1}{2+\varepsilon}}\\
\leq&C2^{ -\f{\varepsilon k}{2+\varepsilon}}\B( \int_{0}^{2^{\f{Q(q-2)}{q}}}
\lambda^{\f{q(2+\varepsilon)}{q-2}-1-\f{2q}{q-2}}  d\lambda\B)^{\f{1}{2+\varepsilon}}\\
\leq&C2^{ -\f{\varepsilon k}{2+\varepsilon}}\B( \int_{0}^{2^{\f{Q(q-2)}{q}}}
\lambda^{\f{q \varepsilon}{q-2}-1}  d\lambda\B)^{\f{1}{2+\varepsilon}}\\
\leq &C 2^{\f{ \varepsilon(Q-k)}{2+\varepsilon}}.
\ea$$
Inserting the latter inequality into \eqref{215.2451}, we get
\be\ba
&\int_{I_{Q}^{c}}\sum_{k}2^{k}\|\Delta_{k}v\|^{3}_{L^{3}(\mathbb{R}^{3})}
2^{-\f{2|k-Q|}{3}}ds\\ \leq &
C(\| v_{0}\| _{L^{2}(\mathbb{R}^{3})} ) \sum_{k}2^{-\f{(4-\varepsilon)|k-Q|}{3(2+\varepsilon)}}\B(\int_{0}^{T}2^{ 2 k  }
 \|\Delta_{k}v\|^{2}_{L^{2}(\mathbb{R}^{3})}ds\B)^{\f{1+\varepsilon}{2+\varepsilon}}.
\ea\ee
Take the positive constant $\varepsilon<\min\{4,\,q-4\}$. By a similar argument like \eqref{unkown1}, we also have $\int_{I_{Q}^{c}}\sum_{k}2^{k}\|\Delta_{k}v\|^{3}_{L^{3}(\mathbb{R}^{3})}
2^{-\f{2|k-Q|}{3}}ds\rightarrow0\,$ as $\,Q\rightarrow\infty$.

(3)
In light of the Gagliardo-Nirenberg inequality \eqref{GNL} for Lorentz spaces and \eqref{lincreases}, we obtain
$$
\|v\|_{L^{4}(\mathbb{R}^{3})} \leq C\|\nabla v\|_{L^{2}(\mathbb{R}^{3})}^{\frac{3(4-q)}{2(6-q)}}\|v\|_{L^{q,\infty}(\mathbb{R}^{3})}^{\frac{q}{12-2q}}, ~~\text { with}  ~~3< q < 4.
$$
Hence, the H\"older inequality entails that
$$\ba
\int_{0}^{T}\|v\|_{L^4(\mathbb{R}^{3})}^{4} d s \leq&C \int_{0}^{T}\|\nabla v\|_{L^{2}(\mathbb{R}^{3})}^{\frac{6(4-q)}{6-q}}\|v\|_{L^{q,\infty}(\mathbb{R}^{3})}^{\frac{2q}{6-q}}
  d s\\
\leq & C\B(\int_{0}^{T}\|\nabla v\|^{2}_{L^{ 2}(\mathbb{R}^{3})}d s\B)^{\frac{3(4-q)}{6-q}}\B(\int_{0}^{T}
\|v\|^{\f{q}{q-3}}_{L^{q,\infty}(\mathbb{R}^{3})}
  d s\B)^{\frac{2(q-3)}{6-q}}.
\ea$$
This means that $v\in L^{4}(0,T;L^{4}(\mathbb{R}^{3}) ),$ which helps us to get
energy conservation.

(4)
Since $3/2<q<9/5$, we may choose an index $\tilde{q}\in (q,\,9/5)$ such that $\,1/\tilde{q}=(1-\alpha)/2+\alpha/q\,$ with some $\alpha\in(0,\,1).$ Let $1/\tilde{p}+3/\tilde{q}=2.$ Then it follows from $q>3/2$ that $1/\tilde{p}=2-3(1-\alpha)/2-3\alpha/q>(1-\alpha)/2.$

Thanks to the interpolation characteristic \eqref{Interpolation characteristic} of Lorentz spaces, \eqref{lincreases} and the H\"older inequality, we arrive at
 $$\ba
\int_{0}^{T}\|\nabla v\|^{\tilde{p}}_{L^{\tilde{q}}(\mathbb{R}^{3})} d s \leq& C\int_{0}^{T}\|\nabla v\|^{(1-\alpha)\tilde{p}}_{L^{2}(\mathbb{R}^{3})}
\|\nabla v\|^{\alpha\tilde{p}}_{L^{q,\infty}(\mathbb{R}^{3})} d s \\
\leq & C\B(\int_{0}^{T}\|\nabla v\|^{2}_{L^{ 2}(\mathbb{R}^{3})}d s\B)^{\frac{(1-\alpha)\tilde{p}}{2}}\B(\int_{0}^{T}
\|\nabla v\|^{p}_{L^{q,\infty}(\mathbb{R}^{3})} d s\B)^{1-\frac{(1-\alpha)\tilde{p}}{2}},
\ea$$
where we have used the fact that
$$  \frac{2\alpha \tilde{p}}{2-(1-\alpha)\tilde{p}}=p\,, ~~\text{and}~~0<\frac{(1-\alpha)\tilde{p}}{2}<1.$$
Hence, we conclude the desired energy equality from the known result \eqref{bcz}.

(5)
Note that $1/s<p<3,$ we may choose an index $p_{1}$ such that $\max\{1,p\}<p_{1}<\min\{3,sp\}.$ Take $\theta=1-p/p_{1},$ then $0<\theta<1-1/s.$ Let $1/p_{1}+6/(5q_{1})=1,$ which implies that $\,9/5 <q_{1}<\infty\,$ and
$$  \frac{3}{q_{1}}-1= \frac{3\theta}{2}+(1-\theta)\left(\frac{3}{2}-\frac{5}{2p}\right)= \frac{3\theta}{2}+(1-\theta)\left(\frac{3}{q}-s\right).  $$
By the Gagliardo-Nirenberg inequality  \eqref{glgn} for Besov-Lorentz spaces and \eqref{lincreases}, we find that
$$\ba
\int_{0}^{T}\|\nabla v\|^{p_{1}}_{L^{q_{1}}(\mathbb{R}^{3})}ds\leq& C\int_{0}^{T} \|  v\|^{\theta p_{1}}_{L^{2}(\mathbb{R}^{3})} \|v\|^{(1-\theta)p_{1}}_{\dot{B}^{s}_{q,\infty,\infty}}ds \\
\leq& C\|v\|^{p_{1}-p }_{L^{\infty}(0,T;L^2(\mathbb{R}^{3}))}\int_{0}^{T}\|v\|^{p}_{\dot{B}^{s}_{q,\infty,\infty}}ds.
\ea$$
This together with the  known result \eqref{bcz} yields the energy equality.

Consequently, we complete the proof of this theorem.
  \end{proof}

\section*{Acknowledgements}
The authors would like to express their sincere gratitude to Dr. Xiaoxin Zheng at Beihang University,
for a lot of useful discussion on this topic.
Wang was partially supported by  the National Natural
Science Foundation of China under grant (No. 11971446, No. 12071113   and  No. 11601492).
Wei was partially supported by the National Natural Science Foundation of China under grant (No. 11601423, No. 11771352, No. 11871057).
Ye was partially supported by the National Natural Science Foundation of China  under grant (No. 11701145) and China Postdoctoral Science Foundation (No. 2020M672196).


\begin{thebibliography}{00}
\bibitem{[BCD]}
H. Bahouri,  J. Chemin and R. Danchin,, Fourier analysis and nonlinear partial differential equations. Grundlehren der Mathematischen Wissenschaften 343. Springer, Heidelberg, 2011.
\bibitem{[Barraza]}
 O. A. Barraza,
Self-similar solutions in weak $L^p$-spaces of the Navier-Stokes equations.
Rev. Mat. Iberoamericana 12 (1996),  411--439.
\bibitem{[BY]}H. Beirao da Veiga and J. Yang, On the Shinbrot's criteria for energy equality to Newtonian fluids: a simplified proof, and an extension of the range of application. Nonlinear Anal. 196 (2020), 111809, 4 pp.
\bibitem{[BS]}
C. Bennett and R. Sharpley, Interpolation of Operators,Academic Press, Inc., Boston, MA ,  1988.
\bibitem{[BC]}L. C. Berselli and E. Chiodaroli,   On the energy equality for the 3D Navier-Stokes equations. Nonlinear Anal. 192 (2020), 111704, 24 pp.


\bibitem{[BL]}
J. Bergh and J. L\"ofstr\"om, Interpolation Spaces. Springer-Verlag, Berlin, 1976.

\bibitem{[BM]}
H. Brezis and P.  Mironescu,
Gagliardo-Nirenberg inequalities and non-inequalities: the full story.
Ann. Inst. H. Poincar\'e Anal. Non Lin\'e aire 35 (2018),   1355-1376.

\bibitem{[CKN]} L. Caffarelli, R.  Kohn and L. Nirenberg,  Partial regularity of suitable weak solutions of Navier-Stokes equation, Comm. Pure. Appl. Math.,  35  (1982), 771--831.
    \bibitem{[CV]} L. Caffarelli and A. Vasseur,  Drift diffusion equations with fractional diffusion and the quasi-geostrophic equation. Ann. of Math. 171(   2010), 1903--1930.

    \bibitem{[CF]}
 J. A. Carrillo and L. C F. Ferreira, Self-similar solutions and large time asymptotics for the dissipative quasi-geostrophic equation. Monatsh. Math.,   151 (2007), 111--142.
\bibitem{[CCFS]}
A. Cheskidov and  P. Constantin, S. Friedlander and R. Shvydkoy, Energy conservation and Onsager's conjecture for the Euler equations Nonlinearity, 21 (2008), 1233--52.
\bibitem{[CL]}
   A. Cheskidov and X. Luo, Energy equality for the Navier-Stokes equations in weak-in-time Onsager spaces. Nonlinearity, 33 (2020), 1388--1403.







\bibitem{[Chikami]}
N.  Chikami,
On Gagliardo-Nirenberg type inequalities in Fourier-Herz spaces.
J. Funct. Anal. 275 (2018),   1138--1172.












\bibitem{[DDN]}N. A. Dao, J. I.  D\'iaz and Q.-H.  Nguyen,   Generalized Gagliardo-Nirenberg inequalities using Lorentz spaces, BMO, H\"{o}lder spaces and fractional Sobolev spaces. Nonlinear Anal. 173 (2018), 146--153.

     \bibitem{[De Giorgi]}
E. De Giorgi, Sulla differenziabilita e l'analiticita delle estremali degli integrali multipli regolari, Mem. Accad. Sci. Torino. Cl. Sci. Fis. Mat. Nat., 3 (  1957), 25--43.
    \bibitem{[Duoandikoetxea]}
J. Duoandikoetxea, Fourier Analysis, Amer. Math. Soc. Providence, RI, 2001.

\bibitem{[FFRS]}A. Fiorenza, M. R. Formica, T. Roskovec, F. Soudsk\'y, Gagliardo-Nirenberg inequality for rearrangement-invariant Banach function spaces, Atti Accad. Naz. Lincei Cl. Sci. Fis. Mat. Natur. Rend. Lincei (9) Mat. Appl. 30 (2019) 847--864.
\bibitem{[Galdi]} G.P. Galdi, An introduction to the Navier-Stokes initial-boundary value problem, in: Fundamental Directions in Mathematical Fluid Mechanics, in: Adv. Math. Fluid Mech., Birkh\"auser, Basel, 2000,   1--70.












 \bibitem{[Gagliardo]} E. Gagliardo, Ulteriori proprieta di alcune classi di funzioni in piu variabili (Italian), Ricerche Mat. 8 (1959), 24--51.

\bibitem{[Grafakos]}
L. Grafakos, Classical Fourier analysis. 2nd Edition, Springer, 2008

\bibitem{[HMOW]}
 H. Hajaiej, L. Molinet, T. Ozawa and B. Wang, Necessary and sufficient conditions for the fractional Gagliardo-Nirenberg inequalities and applications to Navier-Stokes and generalized Boson equations, in: Harmonic Analysis and Nonlinear Partial Differential Equations, in: RIMS Kokyuroku Bessatsu, vol.B26, Res. Inst. Math. Sci. (RIMS), Kyoto, 2011,  159--175.
\bibitem{[HYZ]}
H.  Hajaiej, X. Yu and  Z. Zhai, Fractional Gagliardo-Nirenberg and Hardy inequalities under Lorentz norms. J. Math. Anal. Appl. 396 (2012),   569--577.


\bibitem{[Hopf]}
E. Hopf, Uber die Anfangswertaufgabe fur die hydrodynamischen Grundgleichungen, Math. Nachr., (German) 4,  (1950), 213--231.








\bibitem{[KL]}
 H. Ko and S. Lee,  Fourier transform and regularity of characteristic functions. Proc. Am. Math. Soc., 145 (2017), 1097--1107.
 \bibitem{[JWW]}X. Ji, Y. Wang and W. Wei, New regularity criteria based on pressure or gradient of
velocity in Lorentz spaces for the 3D Navier-Stokes equations.  J. Math. Fluid Mech., 22 (2020), 8 pages.
  \bibitem{[KP]}
V. I. Kolyada and  F. J.  P\'erez L\'azaro, On Gagliardo-Nirenberg type inequalities. J. Fourier Anal. Appl. 20 (2014),   577--607.

 \bibitem{[Leray1]}J. Leray, Sur le mouvement d\'eun liquide visqueux
emplissant l\'espace, Acta Math.,  63, (1934)  193--248.
\bibitem{[Lions]}
J.L. Lions, Sur la r\'egularit\'e et l'unicit\'e des solutions turbulentes des \'equations de Navier Stokes, Rend. Semin. Mat. Univ. Padova, 30 (1960) 16--23.

    \bibitem{[Maly]}
J. Mal\'y, Advanced theory of differentiation--Lorentz spaces, March 2003
http://www.karlin.mff.cuni.cz/\~{}maly/lorentz.pdf.
\bibitem{[MS]}
V. G. Maz'ya and  T. O. Shaposhnikova, On pointwise interpolation inequalities for
derivatives. Math. Bohem., 124  (1999), 131-148.
\bibitem{[MRR]} D. S. McCormick, J. C. Robinson  and J. L. Rodrigo,
Generalised Gagliardo-Nirenberg inequalities using weak Lebesgue spaces and BMO.
Milan J. Math. 81 (2013),  265--289.

\bibitem{[Neil]}
R. O'Neil, Convolution operaters and $L^{p,q}$ spaces. Duke Math J., 30 (1963),  129--142.
\bibitem{[Nirenberg]} L. Nirenberg, On elliptic partial differential equations. Ann. Scuola Norm. Sup. Pisa Cl. Sci. 13 (1955), 116--162.



			
		






\bibitem{[Ozawa]}
T. Ozawa, On critical cases of Sobolev's inequalities. J. Funct. Anal., 127 (1995), 259--269.
\bibitem{[ST]}
 A. Seeger and W. Trebels, Embeddings for spaces of Lorentz-Sobolev type. Math. Ann. 373 (2019),   1017--1056.

 \bibitem{[SMR]}
 F. Soudsky,  A.  Molchanova and  T.  Roskovec, Interpolation between H\"older and Lebesgue spaces with applications. J. Math. Anal. Appl. 466 (2018), no. 1, 160-168.




	\bibitem{[Shinbrot]}
M. Shinbrot, The energy equation for the Navier-Stokes system, SIAM J. Math. Anal. 5 (1974) 948--954.
			

\bibitem{[Taniuchi]}
		 Y. 	Taniuchi, On generalized energy equality of the Navier-Stokes equations. Manuscripta Math. 94 (1997),  365--384.
    \bibitem{[Takahashi]}S. Takahashi,  On interior regularity criteria for weak solutions of the Navier-Stokes equations. Manuscripta
Math., 69 (1990), 237--254.
 \bibitem{[Tartar]}L. Tartar, Imbedding theorems of Sobolev spaces into Lorentz spaces. Bollettino dell'Unione Matematica Italiana, 1 (1998) 479--500.



    \bibitem{[Wadade1]}
    H. Wadade, Remarks on the Gagliardo-Nirenberg type inequality in the Besov and the Triebel-Lizorkin spaces in the limiting case. J. Fourier Anal. Appl. 15 (2009),   857--870.
  \bibitem{[Wadade2]}  H. Wadade, Quantitative estimates of embedding constants for Gagliardo-Nirenberg inequalities on critical Sobolev-Besov-Lorentz spaces. J. Fourier Anal. Appl. 19 (2013),  1029--1059.





\bibitem{[WY]}
  Y.  Wang and Y. Ye,
Energy conservation via a combination of velocity and its gradient in the Navier-Stokes system. arXiv:2106.01233.


    	





 \bibitem{[YCP]}
Q.  Yang,  Z. Chen, L. Z. Peng,  Uniform characterization of function spaces by wavelets. Acta Math. Sci. Ser. A Chin. Ed. 25(1), 130-144 (2005).

\bibitem{[Zhang]}
		Z.	Zhang,  Remarks on the energy equality for the non-Newtonian fluids. J. Math. Anal. Appl. 480 (2019), no. 2, 123443, 9 pp.
\end{thebibliography}
\end{document}